\let\oldmarginpar\marginpar
\renewcommand\marginpar[1]{\-\oldmarginpar[\raggedleft\footnotesize #1]%
{\raggedright\footnotesize #1}}
\newtheorem{theorem}{Theorem}
\newtheorem{lemma}[theorem]{Lemma}
\newtheorem{corollary}[theorem]{Corollary}
\newtheorem{proposition}[theorem]{Proposition}
\theoremstyle{remark}
\newtheorem*{remark}{Remark}
\newtheorem*{remarks}{Remarks}
\numberwithin{theorem}{section} \numberwithin{equation}{section}
\numberwithin{figure}{section}
\newcommand{\CC}{\mathcal{C}}
\newcommand{\R}{\mathbb{R}}
\newcommand{\C}{\mathbb{C}}
\newcommand{\Q}{\mathbb{Q}}
\newcommand{\Z}{\mathbb{Z}}
\newcommand{\N}{\mathbb{N}}
\newcommand{\SL}{{\text {\rm SL}}}
\newcommand{\sgn}{\operatorname{sgn}}
\newcommand{\re}{\textnormal{Re}}
\newcommand{\im}{\textnormal{Im}}
\def\H{\mathbb{H}}
\newcommand{\narrow}{\mathcal{A}}
\newcommand{\ED}{E_D}
\newcommand{\Eich}{\mathcal{E}}
\newcommand{\DD}{\mathcal{D}}
\newcommand{\BB}{\mathcal{B}}
\newcommand{\fkD}{f_{k,D}}
\newcommand{\fkDD}[1]{f_{k,#1}}
\newcommand{\PPkDD}[1]{\mathcal{F}_{1-k,#1}}
\newcommand{\PPMD}[1]{\mathcal{G}_{1-k,#1}}
\newcommand{\PPkD}{\mathcal{F}_{1-k,D}}
\newcommand{\PP}{f_{k,D,\narrow}}
\newcommand{\PPA}[1]{f_{k,D,#1}}
\newcommand{\PPeta}{P_{k,\eta}}
\newcommand{\PPM}{\mathcal{F}_{1-k,D,\narrow}}
\newcommand{\PPMeta}{\mathcal{P}_{1-k,\eta}}
\newcommand{\PPa}{\mathcal{G}_{2}}
\newcommand{\PPb}{\mathcal{G}_{1}}
\newcommand{\arccot}{\operatorname{arccot}}
\newcommand{\mA}{\mathsf{A}}
\newcommand{\mB}{\mathsf{B}}
\newcommand{\mC}{\mathsf{C}}
\newcommand{\cpt}{\mathscr{C}}
\newcommand{\bd}{\mathscr{B}}
\newcommand{\bdd}{\mathcal{B}}
\newcommand{\I}{\mathcal{I}}
\newcommand{\QD}{\mathcal{Q}_D}
\newcommand{\QDp}{\mathcal{Q}_{Dp^2}}
\newcommand{\phistr}{\widehat{\varphi}}
\begin{document}

\title[Locally harmonic Maass forms]{Locally harmonic Maass forms and the kernel of the Shintani lift}

\author{Kathrin Bringmann} 
\address{Mathematical Institute\\University of Cologne\\ Weyertal 86-90 \\ 50931 Cologne \\Germany}
\email{kbringma@math.uni-koeln.de}
\author{Ben Kane}
\address{Mathematical Institute\\University of Cologne\\ Weyertal 86-90 \\ 50931 Cologne \\Germany}
\email{bkane@math.uni-koeln.de}
\author{Winfried Kohnen}
\address{Mathematisches Institut\\Universit\"at Heidelberg\\ INF 288\\ 69210, Heidelberg\\ Germany}
\email{winfried@mathi.uni-heidelberg.de}
\dedicatory{In memory of Marvin Knopp}
\date{\today}
\thanks{The research of the first author was supported by the Alfried Krupp Prize for Young University Teachers of the Krupp Foundation and by the Deutsche Forschungsgemeinschaft (DFG) Grant No. BR 4082/3-1}
\subjclass[2010] {11F37, 11F11, 11F25, 11E16}
\keywords{hyperbolic Poincar\'e series, harmonic weak Maass forms, cusp forms, lifting maps, Shimura lift, Shintani lift, rational periods, wall crossing}

\begin{abstract}
In this paper we define a new type of modular object and construct explicit examples of such functions.  Our functions are closely related to cusp forms constructed by Zagier \cite{ZagierRQ} which played an important role in the construction by Kohnen and Zagier \cite{KohnenZagier} of a kernel function for the Shimura and Shintani lifts between half-integral and integral weight cusp forms.  Although our functions share many properties in common with harmonic weak Maass forms, they also have some properties which strikingly contrast those exhibited by harmonic weak Maass forms.  As a first application of the new theory developed in this paper, one obtains a new proof of the fact that the even periods of Zagier's cusp forms are rational as an easy corollary.
\end{abstract}

\maketitle

\section{Introduction and statement of results}
For an integer $k>1$ and a discriminant $D>0$, define
\begin{equation}\label{eqn:fkDdef}
\fkD\left(\tau\right):=\frac{D^{k-\frac{1}{2}}}{\binom{2k-2}{k-1}\pi} \sum_{\substack{a,b,c\in \Z\\ b^2-4ac=D}} \left(a\tau^2+b\tau+c\right)^{-k},
\end{equation}
where $\tau\in \H$.  This function was introduced by Zagier  \cite{ZagierRQ} in connection with the Doi-Naganuma lift (between modular forms and Hilbert modular forms) and lies in the space $S_{2k}$ of (classical, holomorphic) cusp forms of weight $2k$ for $\Gamma_1:=\SL_2(\Z)$.   
More recently, generalizations of $\fkD$ (where the form in the denominator is no longer quadratic) as well as the case when $D<0$ (resulting in meromorphic modular forms) were elegantly investigated by Bengoechea in her Ph.D. thesis \cite{Bengoechea}.  Katok \cite{Katok} also realized $\fkD$ as a certain linear combination of hyperbolic Poincar\'e series whose original construction 
is  due to Petersson \cite{Petersson}.  
A 
good overview on hyperbolic Poincar\'e series and their relationship with $\fkD$ 
was given by Imamo{$\overline{\text{g}}$}lu and O'Sullivan \cite{ImOS} 
(see also \cite{DITrational}).

The functions $f_{k,D}$ (and certain variations of them) play an important role in the theory of modular forms of half-integral weight.  Indeed, as shown in \cite{KohnenZagier} and later in \cite{KohnenCoeff}, they are the Fourier coefficients of holomorphic kernel functions for the Shimura \cite{Shimura} (resp. Shintani \cite{Shintani}) lifts between half-integral and integral weight cusp forms.  More precisely, for $\tau,z\in\H$, define
\begin{equation}\label{eqn:Omegadef}
\Omega\left(\tau,z\right):=\sum_{0<D\equiv 0,1\pmod{4}}  \fkD\left(\tau\right) e^{2\pi i Dz}.
\end{equation}
Then $\Omega$ is a modular form of weight $2k$ in the variable $\tau$ and weight $k+\frac{1}{2}$ in the variable $z$. Furthermore, integrating $\Omega$ against a cusp form $f$ of weight $2k$ (resp. $k+\frac{1}{2}$) with respect to the first (resp. second) variable is the Shintani (resp. Shimura) lift of $f$.

In a different way, the function $\fkD$ also give important examples of modular forms with rational periods.  These were studied in \cite{KohnenZagierRational} and have appeared more recently in work of Duke, Imamo{$\overline{\text{g}}$}lu, and T\'oth \cite{DITrational}, where they were shown to be related to be the error to modularity of certain fascinating holomorphic functions which are defined via cycle integrals.  
We elaborate further upon the interrelation between their interesting work and the results in this paper in Section \ref{sec:DIT}.  
This paper is the first in a series of papers introducing and investigating a new type of modular object.  In this paper, we construct an infinite family of functions of this new type and prove that they both closely resemble and are connected to $\fkD$ through differential operators which naturally occur in the theory of harmonic weak Maass forms (see Theorem \ref{thm:xiDk-1}).  The resulting functions also give a 
new explanation 
of the rationality of the even periods of $\fkD$ for $k$ even (see Theorem \ref{thm:ratperiod}).  We expect that these new objects will have further important applications to the theory of modular forms.  

Before introducing these new modular objects, we first recall that a weight $2-2k$ \begin{it}harmonic weak Maass form\end{it} is a real analytic function $\mathcal{F}$ which satisfies weight $2-2k$ modularity, is annihilated by the weight $2-2k$ \begin{it}hyperbolic Laplacian\end{it}
$$
\Delta_{2-2k}:=-y^2\left(\frac{\partial^2}{\partial x^2} +\frac{\partial^2}{\partial y^2}\right) +i\left(2-2k\right)y\left(\frac{\partial}{\partial x} + i \frac{\partial}{\partial y}\right)
$$
and has at most exponential growth at $i\infty$. Here and throughout $\tau\in \H$ is written as $\tau=x+iy$, $x,y\in \R$ with $y>0$.  The theory of harmonic weak Maass forms has proven useful in many areas including combinatorics, number theory, physics, Lie theory, probability theory, and knot theory.  To name a few examples, harmonic weak Maass forms have played a role in understanding Ramanujan's mock theta functions \cite{ZwegersThesis}, in proving asymptotics and congruences in partition theory \cite{BringmannOnoAnnals,BruinierOno,Rhoades}, in relating character formulas of Kac and Wakimoto \cite{KacWakimoto} to automorphic forms \cite{BringmannOnoKac,Folsom}, in the study of metastability thresholds for bootstrap percolation models \cite{AndrewsPercolation}, in the quantum theory of black holes \cite{Quantum,Manschot}, in studying the elliptic genera of $K3$ surfaces \cite{EguchiHikami,ManschotMoore}, and in the study of central values of $L$-series and their derivatives \cite{BruinierOnoAnnals}.

Bruinier and Funke \cite{BruinierFunke} have shown that for every $f\in S_{2k}$, there exists a weight $2-2k$ harmonic weak Maass form $\mathcal{F}$ which is related to $f$ through the anti-holomorphic operator $\xi_{2-2k}:=2iy^{2-2k} \overline{\frac{d}{d\overline{\tau}}}$ by $\xi_{2-2k}\left(\mathcal{F}\right)=f$.  Such an $\mathcal{F}$ may be constructed via parabolic Poincar\'e series (for the foundations of this approach, see \cite{Fay}).  Although an algorithm exists to construct $\mathcal{F}$ for a given form, this approach would not seem to yield a universal treatment of all $\fkD$.  A more universal approach was undertaken by Duke, Imamo{$\overline{\text{g}}$}lu, and T\'oth \cite{DITrational}, who constructed a natural holomorphic function $F_k(\tau,Q)$ coefficient-wise via cycle integrals and related it to $\fkD$, which we further explain in Section \ref{sec:DIT}.

However, their construction is (coefficient-wise) via cycle integrals and does not seem to yield an immediate connection with hyperbolic Poincar\'e series.   Therefore, even though we know that 
\rm
a lift of $\fkD$ exists and a related harmonic Maass form was constructed in \cite{DITrational}, it is still desirable to constuct a particular lift which resembles the shape \eqref{eqn:fkDdef} and is also related to hyperbolic Poincar\'e series.  The construction of such a function analogous to \eqref{eqn:fkDdef} leads to a new class of automorphic objects which are the topic of this paper.  To describe the resulting object, we first require some notation.  Let 
\begin{equation}\label{eqn:psidef}
\psi\left(v\right):=\frac{1}{2}\beta\left(v;k-\frac{1}{2},\frac{1}{2}\right)
\end{equation}
be a special value of the incomplete $\beta$-function, which is defined for $s,w\in \C$ satisfying $\re\left(s\right)$, $\re\left(w\right)>0$ by $\beta\left(v;s,w\right):=\int_{0}^v u^{s-1}\left(1-u\right)^{w-1}du$ (for some properties, see p. 263 and p. 944 of \cite{AS}).  The function $\psi$ may be written in a variety of forms, but we choose this representation because it generalizes to other weights (see \eqref{eqn:varphibeta} for another useful representation).  
Denote the set of integral binary quadratic forms $[a,b,c](X,Y):=aX^2+bXY+cY^2$ of discriminant $D$ by $\QD:=\left\{ [a,b,c]: b^2-4ac=D,\ a,b,c\in\Z\right\}$.
Since we want the occurring cycle integrals to be geodesics, we  restrict in the following to the case where $D$ is a non-square discriminant.  For $\tau\in \H$ we set 
\begin{equation}\label{eqn:PPkDdef}
\PPkD\left(\tau\right):=\frac{D^{\frac{1}{2}-k}}{\binom{2k-2}{k-1}\pi}\sum_{Q=\left[a,b,c\right]\in \QD}\sgn\left(a\left|\tau\right|^2 + bx +c\right) Q\left(\tau,1\right)^{k-1} \psi\left(\frac{Dy^2}{\left|Q\left(\tau,1\right)\right|^2_{\phantom{-}}}\right).
\end{equation}
\begin{remark}
After presenting the results of this paper, Zagier has informed us that he has independently investigated (in unpublished work) examples similar to \eqref{eqn:PPkDdef} for some small $k$ (in cases where there are no cusp forms in $S_{2k}$).  In these cases, as we see in Theorem \ref{thm:PPkDexpansion}, the function \eqref{eqn:PPkDdef} is locally equal to a polynomial.  Zagier's investigation of these functions was initiated by a question posed by physicists.  It would be interesting to investigate what our new theory yields in physics.  After viewing a preliminary version of this paper, Bruinier pointed out to the authors that his Ph.D. student Martin H\"ovel \cite{Hoevel}
is also 
 studying a related function in his upcoming thesis.  H\"ovel's construction appears to have connections to the case when $k=1$ (i.e., weight $0$) which is excluded in our study.
\end{remark}

Before relating $\PPkD$ and $\fkD$, we investigate the functions $\PPkD$ themselves a bit closer.  We put
\begin{equation}\label{eqn:EDdef}
\ED:=\left\{ \tau=x+iy\in \H: \exists a,b,c\in \Z,\ b^2-4ac=D,\ a\left|\tau\right|^2+bx+c=0\right\}.
\end{equation}
The group $\Gamma_1$ acts on this set, and $E_D$ is a union of closed geodesics (Heegner cycles) projecting down to finitely many on the compact modular curve.  The set $\ED$ naturally partitions $\H$ into (open) connected components (see Lemma \ref{lem:neighbor}).  Owing to the sign in the definition of $\PPkD$, the functions $\PPkD$ exhibit discontinuities when crossing from one connected component to another, with the value of the limits from either side differing by a polynomial.  The functions $\PPkD$ hence exhibit what is known as wall crossing behavior.  Wall crossing behavior has recently been extensively studied due to its appearance in the quantum theory of black holes in physics (see e.g. \cite{Quantum}).  Although $\PPkD$ is not a harmonic weak Maass form, it exhibits many similar properties.  Outside of the exceptional set $\ED$, the functions $\PPkD$ are locally annihilated by $\Delta_{2-2k}$ and satisfy weight $2-2k$ modularity.  We hence call them \begin{it}locally harmonic Maass forms\end{it} with exceptional set $\ED$ (see Section \ref{sec:prelim} for a full definition). 
\begin{theorem}\label{thm:converge}
For $k>1$ and $D>0$ a non-square discriminant, the function $\PPkD$ is a weight $2-2k$ locally harmonic Maass form with exceptional set $\ED$.
\end{theorem}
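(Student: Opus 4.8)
The plan is to verify the three defining properties of a locally harmonic Maass form for $\PPkD$ in turn: convergence (away from $\ED$), weight $2-2k$ modularity, and local harmonicity (annihilation by $\Delta_{2-2k}$ off $\ED$), together with the required growth at the cusp. First I would address \emph{convergence}. Fix $\tau\in\H\setminus\ED$. For each $Q=[a,b,c]\in\QD$ the quantity $Q(\tau,1)=a\tau^2+b\tau+c$ is nonzero (it vanishes only if $\tau$ is a CM point of discriminant $D$, which is impossible for non-square $D$) and $a|\tau|^2+bx+c$ is nonzero precisely because $\tau\notin\ED$, so every summand is well-defined. The incomplete $\beta$-value $\psi(v)$ is bounded for $v\in[0,1]$, and one checks $0<\tfrac{Dy^2}{|Q(\tau,1)|^2}\le 1$ for all $Q$ (this is the standard inequality underlying the geometry of the Heegner geodesics; it becomes an equality exactly on $\ED$). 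Hence $|\PPkD(\tau)|$ is bounded by a constant times $\sum_{Q\in\QD}|Q(\tau,1)|^{k-1}\,\big(\tfrac{Dy^2}{|Q(\tau,1)|^2}\big)^{k-1/2}$, using the behavior $\psi(v)=O(v^{k-1/2})$ as $v\to0$; this collapses to a constant times $\sum_{Q\in\QD}|Q(\tau,1)|^{-k}$, which converges absolutely for $k>1$ — this is exactly Zagier's convergence estimate for $\fkD$ in \eqref{eqn:fkDdef}. Locally uniform convergence on compact subsets of $\H\setminus\ED$ follows from the same bound, since on such a compact set $|Q(\tau,1)|$ stays bounded away from the critical locus.

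Next I would verify \emph{modularity}. The group $\Gamma_1$ acts on $\QD$ by $(Q\circ\gamma)(X,Y):=Q\big((X,Y)\gamma^t\big)$, and a direct computation gives the transformation laws $Q(\gamma\tau,1)=(c_\gamma\tau+d_\gamma)^{-2}(Q\circ\gamma^{-1})(\tau,1)$ for $\gamma=\sm{a_\gamma}{b_\gamma}{c_\gamma}{d_\gamma}$, together with $\im(\gamma\tau)=|c_\gamma\tau+d_\gamma|^{-2}y$ and the analogous identity for the real-part expression $a|\tau|^2+bx+c$, which transforms by the same automorphy factor $|c_\gamma\tau+d_\gamma|^{-2}$ up to the quadratic-form substitution. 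Consequently the argument $\tfrac{Dy^2}{|Q(\tau,1)|^2}$ of $\psi$ and the sign $\sgn(a|\tau|^2+bx+c)$ are both invariant under simultaneously replacing $\tau\mapsto\gamma\tau$ and $Q\mapsto Q\circ\gamma$, while $Q(\gamma\tau,1)^{k-1}$ picks up $(c_\gamma\tau+d_\gamma)^{-(2k-2)}$. Since $Q\mapsto Q\circ\gamma$ permutes $\QD$, reindexing the (absolutely convergent) sum yields $\PPkD(\gamma\tau)=(c_\gamma\tau+d_\gamma)^{2k-2}\PPkD(\tau)$, i.e.\ weight $2-2k$ modularity. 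Absolute convergence is what legitimizes the reindexing.

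For \emph{local harmonicity}, I would work termwise on a connected component $\mathcal{C}$ of $\H\setminus\ED$: there the sign factors $\sgn(a|\tau|^2+bx+c)$ are all locally constant, so on $\mathcal{C}$ the function $\PPkD$ is, up to the constant, $\sum_{Q\in\QD}\epsilon_Q\, Q(\tau,1)^{k-1}\psi\big(\tfrac{Dy^2}{|Q(\tau,1)|^2}\big)$ with fixed signs $\epsilon_Q\in\{\pm1\}$. It then suffices to show each individual summand $\varphi_Q(\tau):=Q(\tau,1)^{k-1}\psi\big(\tfrac{Dy^2}{|Q(\tau,1)|^2}\big)$ is annihilated by $\Delta_{2-2k}$ on $\mathcal{C}$; the interchange of $\Delta_{2-2k}$ with the sum is justified by locally uniform convergence of the differentiated series (same estimate as above, with extra polynomial factors in $|Q(\tau,1)|$ absorbed into $k>1$). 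The computation that $\Delta_{2-2k}\varphi_Q=0$ reduces, after writing $Q(\tau,1)=a(\tau-w_Q)(\tau-\overline{w_Q})$ with $w_Q$ the associated geodesic's endpoint datum, to the defining differential equation of the incomplete $\beta$-function $\psi$ in \eqref{eqn:psidef}: the parameters $k-\tfrac12$ and $\tfrac12$ are chosen precisely so that $u\mapsto u^{k-3/2}(1-u)^{-1/2}$, integrated, solves the relevant second-order ODE, which translates into $\Delta_{2-2k}$-harmonicity of $\varphi_Q$. I expect \textbf{this termwise harmonicity computation to be the main obstacle} — it is the one genuinely non-formal step, requiring the careful change of variables $v=\tfrac{Dy^2}{|Q(\tau,1)|^2}$, the chain rule for $\Delta_{2-2k}$ in the $(x,y)$ coordinates, and recognition of the resulting ODE as the one satisfied by $\psi$ (with the $\beta$-function weights $(k-\tfrac12,\tfrac12)$ being exactly what makes the inhomogeneous terms cancel). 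Finally, growth at $i\infty$ is controlled by the same bound $|\PPkD(\tau)|\ll\sum_{Q\in\QD}|Q(\tau,1)|^{-k}$: as $y\to\infty$ in a fixed component, every term with $a\neq0$ decays like $y^{-2k}$ and the finitely many terms with $a=0$ (where $Q(\tau,1)=b\tau+c$ is linear) contribute at most polynomial growth in $y$, so $\PPkD$ has at most polynomial — hence certainly at most exponential — growth at the cusp. Assembling the four pieces gives that $\PPkD$ is a weight $2-2k$ locally harmonic Maass form with exceptional set $\ED$, which is Theorem~\ref{thm:converge}.
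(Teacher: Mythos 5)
Your outline covers modularity, local harmonicity, and growth, but it misses one of the four conditions in the paper's definition of a locally harmonic Maass form: the requirement that for $\tau\in\ED$ one has $\mathcal{F}(\tau)=\frac{1}{2}\lim_{w\to 0^+}\left(\mathcal{F}(\tau+iw)+\mathcal{F}(\tau-iw)\right)$. You explicitly enumerate ``the three defining properties,'' which suggests the omission is not just expository. This condition is not automatic: it is Proposition \ref{prop:boundary} in the paper, and proving it requires (i) locally uniform convergence on compact subsets of all of $\H$, including points of $\ED$ (your bound $\psi(v)\ll v^{k-\frac12}$ combined with $|Q(\tau,1)|^2=Dy^2+(a|\tau|^2+bx+c)^2\ge Dy^2$ does in fact hold there, but you only assert convergence away from $\ED$), and (ii) the observation that for the finitely many $Q\in\bd_{\tau}$ with $\tau\in S_Q$ the factor $\sgn\left(a|\tau\pm iw|^2+bx+c\right)$ takes opposite signs on the two sides of $S_Q$ (this is \eqref{eqn:circavg2}), so that these terms cancel in the average while all other terms are continuous through $\tau$. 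Without this step the theorem as stated is not proved. A second, smaller issue: the termwise identity $\Delta_{2-2k}\varphi_Q=0$, which you correctly flag as the main obstacle, is left entirely undone; the paper avoids the second-order ODE you anticipate by instead computing $\xi_{2-2k}$ of the seed function $\phistr$ termwise (it equals the holomorphic function $\tau^{-k}$, see \eqref{eqn:xitau^k}) and then invoking the factorization $\Delta_{2-2k}=-\xi_{2k}\xi_{2-2k}$ of \eqref{eqn:Deltaxigen}. You may want to adopt that route, since it is a one-line first-order computation rather than a second-order one.

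On the positive side, your convergence argument is genuinely different from, and cleaner than, the paper's: bounding $\psi(v)\ll v^{k-\frac12}$ reduces everything to the absolute convergence of $\sum_{Q\in\QD}|Q(\tau,1)|^{-k}$, i.e.\ to the convergence of the defining series of $\fkD$ itself, whereas the paper's Proposition \ref{prop:converge} performs an explicit splitting into ranges of $n$ and $a$ and invokes Zagier's counting formula \eqref{eqn:Zagier}. (Note, however, that the implied constant in your bound carries a factor $y^{2k-1}$, so your growth estimate at $i\infty$ needs the decay of $\sum_{Q}|Q(\tau,1)|^{-k}$ made quantitative; also, since $D$ is a non-square there are no forms with $a=0$, so that case is vacuous.) Your direct reindexing proof of modularity is equivalent to the paper's argument via the hyperbolic Poincar\'e series representation of Lemma \ref{lem:PPMQ}, and both hinge on the absolute convergence you establish.
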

Although $\PPkD$ exhibits some behavior which is similar to harmonic weak Maass forms, it also has some other surprising properties.  The differential operator $\DD^{2k-1}$ (where $\DD:=\frac{1}{2\pi i} \frac{d }{d \tau}$) also plays a central role in the theory of harmonic weak Maass forms (see e.g., \cite{BruinierOnoRhoades}).  However, a harmonic weak Maass form cannot map to a cusp form under both $\xi_{2-2k}$ and $\DD^{2k-1}$, as is well known.  Due to discontinuities along the exceptional set $\ED$, our function $\PPkD$ is actually allowed to (locally) map to a constant multiple of $\fkD$ under both operators.
\begin{theorem}\label{thm:xiDk-1}
Suppose that $k>1$ and $D>0$ is a non-square discriminant.  Then for every $\tau\in \H\setminus\ED$, the function $\PPkD$ satisfies
\begin{eqnarray*}
\xi_{2-2k}\left(\PPkD\right)\left(\tau\right) &=& D^{\frac{1}{2}-k}\fkD\left(\tau\right),\\
\DD^{2k-1}\left(\PPkD\right)\left(\tau\right) &=&  -\frac{\left(2k-2\right)!}{\left(4\pi\right)^{2k-1}}D^{\frac{1}{2}-k} \fkD\left(\tau\right).
\end{eqnarray*}
\end{theorem}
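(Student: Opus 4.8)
The plan is to apply each operator directly to the defining series \eqref{eqn:PPkDdef}, term by term. Both $\xi_{2-2k}$ and $\DD^{2k-1}$ are $\R$-linear (the normalizing constant in \eqref{eqn:PPkDdef} is real) and act as differential operators of orders $1$ and $2k-1$ in $\tau,\bar\tau$, and the series \eqref{eqn:PPkDdef} together with all of its partial derivatives converges locally uniformly on $\H\setminus\ED$ (this is established in the course of proving Theorem \ref{thm:converge}), so one may differentiate under the sum. On any connected component of $\H\setminus\ED$ the sign $\sgn(a|\tau|^2+bx+c)$ attached to a fixed $Q=[a,b,c]\in\QD$ is constant, hence commutes with every $\tau$- and $\bar\tau$-derivative. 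Everything thus reduces to computing $\xi_{2-2k}$ and $\DD^{2k-1}$ of the single function $g_Q(\tau):=Q(\tau,1)^{k-1}\psi\big(Dy^2/|Q(\tau,1)|^2\big)$, using $\xi_{2-2k}=2iy^{2-2k}\,\overline{\partial/\partial\bar\tau}$ and $\DD^{2k-1}=(2\pi i)^{1-2k}\,\partial^{2k-1}/\partial\tau^{2k-1}$.

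For $\xi_{2-2k}$ this is short. By \eqref{eqn:psidef}, $\psi'(v)=\tfrac12 v^{k-\frac32}(1-v)^{-\frac12}$, and expanding squares gives the key identity $|Q(\tau,1)|^2=(a|\tau|^2+bx+c)^2+Dy^2$, so that with $v:=Dy^2/|Q(\tau,1)|^2$ one has $1-v=(a|\tau|^2+bx+c)^2/|Q(\tau,1)|^2$. Since $Q(\tau,1)^{k-1}$ is holomorphic, $\partial g_Q/\partial\bar\tau=Q(\tau,1)^{k-1}\psi'(v)\,\partial v/\partial\bar\tau$; computing $\partial v/\partial\bar\tau$ and simplifying with these two identities collapses everything to $\partial g_Q/\partial\bar\tau=\tfrac{i}{2}D^{k-\frac12}y^{2k-2}\sgn(a|\tau|^2+bx+c)\,\overline{Q(\tau,1)}^{-k}$. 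Applying $\xi_{2-2k}$ and using $\overline{\overline{Q(\tau,1)}}=Q(\tau,1)$, the powers of $y$ and of $i$ cancel and the sign gets squared, leaving $\xi_{2-2k}(g_Q)=\sgn(a|\tau|^2+bx+c)\,D^{k-\frac12}Q(\tau,1)^{-k}$; multiplying by the sign in \eqref{eqn:PPkDdef}, summing over $Q\in\QD$ and comparing with \eqref{eqn:fkDdef} gives $\xi_{2-2k}(\PPkD)=D^{\frac12-k}\fkD$.

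For $\DD^{2k-1}$ I would first note that it produces a holomorphic form on $\H\setminus\ED$: differentiating the formula just obtained for $\partial g_Q/\partial\bar\tau$ shows $\partial^{2k-1}_\tau(\partial g_Q/\partial\bar\tau)=0$, being $\partial^{2k-1}_\tau$ of $y^{2k-2}$ (a degree-$(2k-2)$ polynomial in $\tau$) times an anti-holomorphic factor. To evaluate $\DD^{2k-1}g_Q$, observe that in the Leibniz expansion of $\partial^{2k-1}_\tau g_Q$ the term in which $\psi(v)$ is never differentiated equals $\big(\partial^{2k-1}_\tau Q(\tau,1)^{k-1}\big)\psi(v)=0$ for degree reasons, so the answer is built entirely from derivatives of $\psi(v)$. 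To sum these cleanly, pass to the coordinate $u=(\tau-w_1)/(\tau-w_2)$, where $w_1,w_2$ are the real roots of $Q(X,1)$ (they are real and distinct since $D>0$, and $a\neq0$ since $D$ is a non-square), which straightens the geodesic $S_Q$ to $i\R_{>0}$; one computes $Q(\tau,1)=Du\,a^{-1}(1-u)^{-2}$ and $Dy^2/|Q(\tau,1)|^2=(\operatorname{Im}u)^2/|u|^2$, so $g_Q(\tau)=(D/a)^{k-1}(1-u)^{2-2k}u^{k-1}\psi\big((\operatorname{Im}u)^2/|u|^2\big)$, and Bol's identity reduces $\DD^{2k-1}g_Q$ to $\partial_u^{2k-1}$ of $u^{k-1}\psi\big((\operatorname{Im}u)^2/|u|^2\big)$. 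Writing $u=\rho e^{i\theta}$, the latter is $\rho^{k-1}e^{i(k-1)\theta}\psi(\sin^2\theta)$, which separates into $\rho^{k-1}$ times a function of $\theta$ alone; a short induction (using $\tfrac{d}{d\theta}\psi(\sin^2\theta)=\pm\sin^{2k-2}\theta$ and the vanishing of $\prod_{j=1}^{2k-1}(k-j)$, the factor $j=k$ being zero) then gives $\partial_u^{2k-1}$ of it as $\rho^{-k}$ times a constant multiple of $e^{i(k-1)\theta}$ on each of $(0,\frac{\pi}{2})$ and $(\frac{\pi}{2},\pi)$, i.e.\ $\partial_u^{2k-1}\big(u^{k-1}\psi(\cdots)\big)=\mathrm{const}\cdot u^{-k}$. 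Unwinding the coordinate change and tracking constants gives $\DD^{2k-1}g_Q=-\tfrac{(2k-2)!}{(4\pi)^{2k-1}}D^{k-\frac12}\sgn(a|\tau|^2+bx+c)\,Q(\tau,1)^{-k}$, and summing as before yields $\DD^{2k-1}(\PPkD)=-\tfrac{(2k-2)!}{(4\pi)^{2k-1}}D^{\frac12-k}\fkD$. Conceptually this reflects that the holomorphic part of the locally harmonic form $\PPkD$ is, up to a polynomial of degree $\le2k-2$, an Eichler integral of $\fkD$ (cf.\ the Fourier expansion of $\PPkD$, Theorem \ref{thm:PPkDexpansion}), while its non-holomorphic part is a non-holomorphic Eichler integral of $\fkD$; then $\xi_{2-2k}$ sees the latter and $\DD^{2k-1}$ the former, the polynomial being annihilated by Bol's identity.

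The main obstacle is the $\DD^{2k-1}$ identity: whereas $\xi_{2-2k}$ is essentially immediate once the identity $|Q(\tau,1)|^2=(a|\tau|^2+bx+c)^2+Dy^2$ is in hand, for $\DD^{2k-1}$ one must carry out and organize the iterated differentiation — equivalently, verify the $\theta$-recursion and pin down the normalizing constant $-(2k-2)!/(4\pi)^{2k-1}$ — and one must keep in mind that the separation of variables and the recursion hold only on a single connected component of $\H\setminus\ED$, where $\sgn(a|\tau|^2+bx+c)$ is constant; across $\ED$ both operators genuinely detect the jump (the wall crossing), which is exactly why the theorem is stated only for $\tau\in\H\setminus\ED$. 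One also has to justify the term-by-term differentiation of order $2k-1$, i.e.\ locally uniform convergence of the differentiated Poincar\'e series on compacta of $\H\setminus\ED$, which follows from the bound $g_Q(\tau)\ll|Q(\tau,1)|^{-k}$ (and faster decay for its $\tau$-derivatives) underlying Theorem \ref{thm:converge}.
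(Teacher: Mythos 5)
Your proposal is correct, and its backbone is the same as the paper's: both proofs reduce the computation, via the M\"obius transformation determined by the real roots of $Q$ (your $u=(\tau-w_1)/(\tau-w_2)$ is precisely the paper's $A_Q\tau$ up to a positive scaling, cf.\ Lemma \ref{lem:AGamAQ} and \eqref{eqn:AQre}) together with the equivariance of $\xi_{2-2k}$ and Bol's identity for $\DD^{2k-1}$, to evaluating the two operators on the single seed function $\tau^{k-1}\sgn(x)\,\varphi\left(\arctan\left|y/x\right|\right)$ of \eqref{eqn:varphi*def}. Where you genuinely diverge is in the two endgames. For $\xi_{2-2k}$ you skip the coordinate change entirely and differentiate $Q(\tau,1)^{k-1}\psi\bigl(Dy^2/|Q(\tau,1)|^2\bigr)$ in the original variable using $|Q(\tau,1)|^2=(a|\tau|^2+bx+c)^2+Dy^2$; I checked this and your formula $\partial g_Q/\partial\bar\tau=\tfrac{i}{2}D^{k-\frac12}y^{2k-2}\sgn(\cdot)\overline{Q(\tau,1)}^{-k}$ is right, so this is a slightly more self-contained route to the first identity than the paper's \eqref{eqn:xitau^k}. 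For $\DD^{2k-1}$ the paper proves the inductive formula \eqref{eqn:PPMDn}, observes that the $\varphi$-term dies at $n=2k-1$ because of the $\Gamma(k-n)$ in the denominator, and then avoids computing $P_{2k-1}$ directly by invoking harmonicity and homogeneity to force $P_{2k-1}=C|\tau|^{2k-2}$, pinning $C$ from the top $x$-coefficient; you instead separate variables in polar coordinates and run the operator product $\prod_{m=-(k-1)}^{k-1}\bigl(m+\tfrac{1}{2i}\partial_\theta\bigr)$ against the Fourier expansion of $\sin^{2k-2}\theta$, where only the constant mode survives. Both give $\mathrm{const}\cdot u^{-k}$ with the same constant; your version trades the paper's soft harmonicity argument for an explicit but clean computation. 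Two small points: your intermediate phrase ``$\rho^{-k}$ times a constant multiple of $e^{i(k-1)\theta}$'' should read $e^{-ik\theta}$ (the conclusion $\mathrm{const}\cdot u^{-k}$ that you draw from it is the correct one), and the justification for termwise differentiation of order $2k-1$ is asserted rather than proved --- though the paper is no more explicit on this point, so I do not count it against you.
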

\begin{remark}
The excluded case $k=1$ of Theorem \ref{thm:xiDk-1} is a consequence of results in the thesis of H\"ovel \cite{Hoevel}.
\end{remark}
The aforementioned discontinuities of $\PPkD$ along $\ED$ are captured by very simple functions, which are given piecewise as polynomials.  The functions $\PPkD$ are formed by adding these (piecewise) polynomials to real analytic functions which induce the image of $\PPkD$ under the operators $\xi_{2-2k}$ and $\DD^{2k-1}$ given in Theorem \ref{thm:xiDk-1}.  Indeed, in the theory of harmonic weak Maass forms, the function $\fkD$ has a natural (real analytic) preimage under $\xi_{2-2k}$ (resp. $\DD^{2k-1}$) called the non-holomorphic (resp. holomorphic) Eichler integral.  To be more precise, as in \cite{Zagier}, for $f\left(\tau\right) = \sum_{n=1}^{\infty} a_n q^n\in S_{2k}$ ($q=e^{2\pi i \tau}$) we define the \begin{it}non-holomorphic Eichler integral \cite{Eichler} of $f$\end{it} by
\begin{equation}\label{eqn:f*def}
f^*\left(\tau\right):=\left(2i\right)^{1-2k}\int_{-\overline{\tau}}^{i\infty} f^{c}\left(z\right)\left(z+\tau\right)^{2k-2} dz,
\end{equation}
where $f^c\left(\tau\right):=\overline{f\left(-\overline{\tau}\right)}$ is the cusp form whose Fourier coefficients are the conjugates of the coefficients of $f$.  We likewise define the \begin{it}(holomorphic) Eichler integral of $f$\end{it} by
\begin{equation}\label{eqn:Eichdef}
\Eich_{f}\left(\tau\right):=\sum_{n=1}^{\infty} \frac{a_n}{n^{2k-1}} q^n.
\end{equation}
Eichler \cite{Eichler} and Knopp \cite{Knopp} independently showed that the error to modularity of Eichler integrals are polynomials of degree at most $2k-2$ whose coefficients are related to the periods of the corresponding cusp forms. 
Hence, combining Theorem \ref{thm:xiDk-1} with the wall crossing behavior mentioned earlier in the introduction, we are able to obtain a certain type of expansion for $\PPkD$. 
\begin{theorem}\label{thm:PPkDexpansion}
Suppose that $k>1$, $D>0$ is a non-square discriminant, and $\CC$ is one of the connected components partitioned by $\ED$.  Then there exists a polynomial $P_{\CC}$ of degree at most $2k-2$ such that for all $\tau\in \CC$,
$$
\PPkD\left(\tau\right)=P_{\CC}\left(\tau\right) + D^{\frac{1}{2}-k}\fkD^*\left(\tau\right) - D^{\frac{1}{2}-k}\frac{\left(2k-2\right)!}{\left(4\pi\right)^{2k-1}}\Eich_{\fkD}\left(\tau\right).
$$
\end{theorem}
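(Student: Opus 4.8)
The plan is to combine Theorem \ref{thm:xiDk-1} with the structure theory of Eichler integrals, exploiting the fact that on each connected component $\CC$ the function $\PPkD$ is a genuine harmonic weak Maass form (by Theorem \ref{thm:converge}) and hence is completely determined, up to a holomorphic cusp-form-free piece, by its images under $\xi_{2-2k}$ and $\DD^{2k-1}$. First I would fix a component $\CC$ and set
\[
G_{\CC}(\tau) := \PPkD(\tau) - D^{\frac{1}{2}-k}\fkD^*(\tau) + D^{\frac{1}{2}-k}\frac{(2k-2)!}{(4\pi)^{2k-1}}\Eich_{\fkD}(\tau).
\]
The goal is to show $G_{\CC}$ is a polynomial of degree at most $2k-2$ on $\CC$. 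I would first record the standard facts (as in \cite{Zagier}, \cite{BruinierOnoRhoades}) that $\xi_{2-2k}(\fkD^*) = \fkD$ and $\DD^{2k-1}(\fkD^*) = 0$, while $\xi_{2-2k}(\Eich_{\fkD}) = 0$ and $\DD^{2k-1}(\Eich_{\fkD}) = -\frac{(4\pi)^{2k-1}}{(2k-2)!}\fkD$ — here the sign and normalization constants must be matched carefully against the definitions \eqref{eqn:f*def} and \eqref{eqn:Eichdef}; getting these bookkeeping constants exactly right is the one genuinely fiddly point. Granting them, Theorem \ref{thm:xiDk-1} gives immediately that $\xi_{2-2k}(G_{\CC}) = 0$ and $\DD^{2k-1}(G_{\CC}) = 0$ on the interior of $\CC$.

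Next I would argue that a real-analytic function on a connected open set annihilated by both $\xi_{2-2k}$ and $\DD^{2k-1}$ must be a polynomial of degree at most $2k-2$. The vanishing of $\xi_{2-2k}(G_{\CC})$ forces $G_{\CC}$ to be holomorphic on $\CC$; the vanishing of $\DD^{2k-1}$ applied to a holomorphic function means its $(2k-1)$-st derivative is zero, so $G_{\CC}$ is a polynomial in $\tau$ of degree $\le 2k-2$. (One should note $G_{\CC}$ is built from real-analytic pieces — $\PPkD$ by Theorem \ref{thm:converge}, $\fkD^*$ visibly, $\Eich_{\fkD}$ holomorphic — so the differential-operator manipulations are legitimate; if growth/convergence of $\fkD^*$ near the real line on a given component is a concern, one restricts to the interior of $\CC$, which is exactly what the statement asks for.)

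Finally I would set $P_{\CC} := G_{\CC}$, which is defined a priori only on $\CC$ but, being a polynomial, extends to all of $\H$; this is the assertion of the theorem. The main obstacle I anticipate is not conceptual but the matching of normalizations: verifying that the Fourier-expansion normalization of $\fkD^*$ in \eqref{eqn:f*def} (with the $(2i)^{1-2k}$ factor and the $f^c$ versus $f$ subtlety) and of $\Eich_{\fkD}$ in \eqref{eqn:Eichdef} produce exactly the constants $D^{\frac{1}{2}-k}$ and $D^{\frac{1}{2}-k}\frac{(2k-2)!}{(4\pi)^{2k-1}}$ appearing in Theorem \ref{thm:xiDk-1}, with the correct signs. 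A secondary point worth a remark is the interpretation of the theorem's content: the polynomials $P_{\CC}$ for adjacent components differ precisely by the wall-crossing jump of $\PPkD$ across $\ED$ (the $\fkD^*$ and $\Eich_{\fkD}$ terms being continuous across walls), which is the link that will feed into the rationality statement of Theorem \ref{thm:ratperiod}.
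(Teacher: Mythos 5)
Your proposal is essentially the paper's own proof: the paper likewise forms the difference of the locally harmonic form with $D^{\frac{1}{2}-k}\fkD^* - D^{\frac{1}{2}-k}\frac{(2k-2)!}{(4\pi)^{2k-1}}\Eich_{\fkD}$, shows it is annihilated by $\xi_{2-2k}$ (hence holomorphic) and by $\DD^{2k-1}$ (hence a polynomial of degree at most $2k-2$), using Theorem \ref{thm:xiDk-1} together with the standard actions of the two operators on the Eichler integrals. (The paper carries this out per narrow class in Theorem \ref{thm:expansion} and then sums; you work with $\PPkD$ directly, which amounts to the same argument.)

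Two corrections. First, the constant you yourself flagged as the fiddly point is recorded incorrectly: from $\DD(q^n)=nq^n$ and \eqref{eqn:Eichdef} one has $\DD^{2k-1}\left(\Eich_{\fkD}\right)=\fkD$, not $-\frac{(4\pi)^{2k-1}}{(2k-2)!}\fkD$. With your stated value the $\DD^{2k-1}$ cancellation in $G_{\CC}$ fails; with the correct value it matches the $-\frac{(2k-2)!}{(4\pi)^{2k-1}}D^{\frac{1}{2}-k}\fkD$ from Theorem \ref{thm:xiDk-1} exactly, so the fix is immediate but necessary. Second, you should not cite Theorem \ref{thm:converge} for real-analyticity on $\CC$, since in the paper's logical order that theorem is deduced \emph{from} the expansion you are proving; the real-analyticity and local harmonicity you need are already supplied by Proposition \ref{prop:converge}, Lemma \ref{lem:neighbor}, and Proposition \ref{prop:xiDk-1} (see \eqref{eqn:Deltaxi}). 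Neither point changes the structure of the argument, which is sound and identical to the paper's.
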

\begin{remarks}
\noindent

\noindent
\begin{enumerate}
\item
The local polynomial can be explicitly determined using \eqref{PCexplicit}.
\item
According to \cite{KohnenThesis}, one can obtain an exact formula for the coefficients of $\fkD$ in terms of infinite sums involving Sali\'e sums and $J$-Bessel functions.  For more details of the proof, see Theorem 3.1 of \cite{Parson}.
\end{enumerate}
\end{remarks}
The polynomials $P_{\CC}$ occurring in Theorem \ref{thm:PPkDexpansion} are closely related to the even part of the period polynomial of $\fkD$, which we denote by $r^+\left(\fkD;X\right)$ (see Section \ref{sec:periodpoly} for a full definition).  Kohnen and Zagier \cite{KohnenZagierRational} computed this even part in order to prove rationality of the even periods of $\fkD$.

Supplementary to the recent appearance of $r^+\left(\fkD;X\right)$ in the theory of harmonic weak Maass forms \cite{DITrational}, the polynomials $P_{\CC}$ give a new perspective on the following theorem of Kohnen and Zagier \cite{KohnenZagierRational}.
\begin{theorem}\label{thm:ratperiod}
Suppose that $D>0$ is a non-square discriminant and $k>1$ is even.  Then the even part of the period polynomial of $\fkD$ satisfies
\begin{equation}\label{eqn:ratperiod}
r^+\left(\fkD;X\right)\equiv 2\sum_{\substack{\left[a,b,c\right]\in\QD \\ a<0<c}}\left(aX^2+bX+c\right)^{k-1}\ \left(\bmod{\ \left(X^{2k-2}-1\right)}\right).
\end{equation}
\end{theorem}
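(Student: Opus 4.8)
The plan is to derive Theorem \ref{thm:ratperiod} as a corollary of Theorem \ref{thm:PPkDexpansion}, by exploiting the consistency conditions that the local polynomials $P_{\CC}$ must satisfy. The key observation is this: the functions $\fkD^*$ and $\Eich_{\fkD}$ are \emph{globally} defined real-analytic (resp. holomorphic) functions on $\H$; they are continuous across the walls $\ED$. Therefore, by Theorem \ref{thm:PPkDexpansion}, if $\CC_1$ and $\CC_2$ are two adjacent connected components sharing a wall (a piece of a geodesic attached to some $Q=[a,b,c]\in\QD$), then the jump of $\PPkD$ across that wall equals $P_{\CC_1}-P_{\CC_2}$, a polynomial of degree at most $2k-2$. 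On the other hand, this same jump can be computed \emph{directly} from the definition \eqref{eqn:PPkDdef}: the only term whose sign $\sgn(a|\tau|^2+bx+c)$ changes as one crosses the geodesic associated to $Q$ (and the other forms in its $\Gamma_1$-orbit/equivalence under sign) is the $Q$-term itself, and the change is precisely $\pm 2 Q(\tau,1)^{k-1}\psi(\ldots)$ evaluated on the wall, where $\psi$ takes the boundary value $\psi(1)=\frac12\beta(1;k-\tfrac12,\tfrac12)=\frac{(2k-2)!\,\pi}{2^{4k-3}((k-1)!)^2}D^{\ldots}$-type constant coming from the complete beta function. So I would first make the wall-crossing computation explicit: crossing the geodesic of $Q$ changes $\PPkD$ by $\frac{2D^{1/2-k}}{\binom{2k-2}{k-1}}Q(\tau,1)^{k-1}$ times the appropriate constant, and this is the quantity denoted in the Remark as \eqref{PCexplicit}.

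Next I would set up the bookkeeping that turns a sum of such local jumps into a period polynomial statement modulo $X^{2k-2}-1$. The natural path is to travel from one fixed ``base'' component all the way to its image under the generator $T=\sm1101$ (or to pass from the region near $0$ to the region near $i\infty$), crossing a sequence of walls; the total accumulated jump is then a sum over the quadratic forms $Q=[a,b,c]\in\QD$ whose geodesics separate $0$ from $i\infty$, and those are exactly the forms with $a<0<c$ (equivalently $Q$ is not sign-definite on the positive imaginary axis in the required way, so that the semicircle $a|\tau|^2+bx+c=0$ has its two endpoints on opposite sides, namely $ac<0$, and among those one picks out $a<0<c$ by an orientation/branch choice). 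On the other hand, the same ``difference between the component near $0$ and the component near $i\infty$'' can be read off from the growth behavior of $\PPkD$ at the two cusps $0$ and $i\infty$, which by Theorem \ref{thm:PPkDexpansion} is governed by the Eichler integrals $\fkD^*$ and $\Eich_{\fkD}$ and hence, via the classical Eichler--Shimura theory already cited (Eichler \cite{Eichler}, Knopp \cite{Knopp}), by the period polynomial $r(\fkD;X)$ of $\fkD$. Concretely: the error to modularity of $\Eich_{\fkD}$ under $T$ is $0$ but under $S=\sm0{-1}10$ it is (a multiple of) $r(\fkD;X)$; the polynomial $P_{\CC}$ must be compatible with the weight $2-2k$ modularity of $\PPkD$ (Theorem \ref{thm:converge}), so the $P_{\CC}$ themselves transform like a period-polynomial cocycle, and matching the $S$-transformation forces an identity for $r(\fkD;X)$ modulo the $T$-coboundary, which is exactly the span of $X^{2k-2}-1$.

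Finally I would isolate the even part. Since $k$ is even, the form $\fkD$ satisfies $\fkD^c=\fkD$ with real coefficients and in fact $\fkD$ has an extra symmetry (coming from $[a,b,c]\mapsto[-a,b,-c]$ or $[a,b,c]\mapsto[c,-b,a]$ in \eqref{eqn:fkDdef}) that makes the relevant cocycle land in the even part $r^+(\fkD;X)$; dually, the sum $\sum_{a<0<c}(aX^2+bX+c)^{k-1}$ is manifestly symmetric under $X\mapsto -X$ up to a relabeling of forms, hence already even modulo $X^{2k-2}-1$. Putting these together and chasing through the constant $\frac{2D^{1/2-k}}{\binom{2k-2}{k-1}}\cdot(\text{beta-value})$ against the normalizing constant $\frac{D^{k-1/2}}{\binom{2k-2}{k-1}\pi}$ in \eqref{eqn:fkDdef}, all the $D$-powers and binomials cancel and one is left with the clean factor $2$ in \eqref{eqn:ratperiod}.

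\medskip

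The main obstacle I anticipate is the orientation/branch bookkeeping: correctly pinning down \emph{which} geodesics are crossed (the condition $a<0<c$ rather than, say, $c<0<a$ or the full set $ac<0$) and with which sign each wall contributes, so that the telescoping sum of local jumps matches the period polynomial \emph{on the nose} and not merely up to an overall sign or up to doubling. This requires a careful analysis of Lemma \ref{lem:neighbor} (how $\ED$ partitions $\H$ and which components are adjacent across which forms) and of the precise boundary value of $\psi$, i.e. of $\sgn(a|\tau|^2+bx+c)$ flipping exactly once per relevant form. The rest — the beta-function constant, the Eichler--Shimura dictionary between Eichler integrals and period polynomials, and extracting the even part when $k$ is even — is essentially bookkeeping built on results already in the literature and on Theorems \ref{thm:converge}--\ref{thm:PPkDexpansion} proved earlier in the paper.
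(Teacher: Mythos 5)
Your proposal is correct and follows essentially the same route as the paper: the paper computes $P_{\CC_0}$ in two ways, once explicitly via the wall-crossing/jump formula (Corollary \ref{cor:polyPPkD}, yielding the sum over $a<0<c$ with the $\beta(k-\frac{1}{2},\frac{1}{2})$ boundary value of $\psi$), and once via the modularity of $\PPkD$ under $S$ (which sends $\CC_0$ to $\CC_{i\infty}$ where the polynomial is the constant $c_\infty$), bringing in $r_S$, $R_S$, the Knopp relation, and the reality of $\fkD(iy)$ to isolate $r^+$; the $X^{2k-2}-1$ ambiguity arises from the $S$-coboundary of the undetermined constants (not a $T$-coboundary, a minor slip in your write-up). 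The orientation bookkeeping you flag is handled in the paper exactly as you anticipate, by tracking the $\pm Q$ pairs flipping sign across each wall and noting that the geodesics separating $0$ from $i\infty$ are precisely those with $ac<0$.
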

\begin{remarks}

\noindent
\begin{enumerate}
\item
By the congruence we mean that the left and right hand sides differ by a constant multiple of $X^{2k-2}-1$.  The theorem of the third author and Zagier explicitly supplies the implied constant, which is a ratio of Bernoulli numbers times a certain class number.  We also note that the sum in \eqref{eqn:ratperiod} is finite, which follows from reduction theory.
\item 
It would be interesting to further investigate the relation between the (modular completion of the) holomorphic functions in \cite{DITrational} and the functions $\PPkD$.
\item 
The right-hand side of \eqref{eqn:ratperiod} precisely runs over those $Q\in \QD$ with $a<0$ for which the corresponding semi-circles $S_Q$ contain $0$ in their interior.  The appearance of these binary quadratic forms is explained by our proof of Theorem \ref{thm:ratperiod}.  In particular, the polynomial part $P_{\CC}$ in Theorem \ref{thm:PPkDexpansion} may be computed by comparing the polynomials in adjacent connected components.  From this perspective, one obtains a contribution to $P_{\CC}$ by crossing precisely those $S_Q$ which circumscribe $\CC$.  For the connected component containing $0$, this is precisely those $S_Q$ which have $0$ in their interior.  
\end{enumerate}
\end{remarks}

The Hecke algebra naturally decomposes $S_{2k}$ into one dimensional simultaneous eigenspaces for all Hecke operators.  
The action of the Hecke operators on $\fkD$ is easily computed and 
strikingly simple \cite{Parson}, namely, for a prime $p$
$$
\fkDD{D}\Big|_{2k}T_p = \fkDD{Dp^2}+p^{k-1}\left(\frac{D}{p}\right)\fkDD{D}+p^{2k-1}\fkDD{\frac{D}{p^2}},
$$
where $T_p$ is the $p$-th Hecke operator acting on translation invariant functions (see \eqref{eqn:Heckedef} for a definition).  Note that the right hand side of the above formula reflects the action of the half-integral weight Hecke operator $T_{p^2}$ (when the subscript $D$ is taken to denote the $D$-th coefficient).  This is no accident, owing to the fact that $\fkD$ is the $D$-th Fourier coefficient of the kernel function $\Omega$ (defined in \eqref{eqn:Omegadef}) in the $z$ variable and the Hecke operators commute with the Shimura and Shintani lifts.  This connection between the integral and half-integral weight Hecke operators on the functions $\fkD$ extends to the functions $\PPkD$.
\begin{theorem}\label{thm:Hecke}
Suppose that $k>1$, $D>0$ is a non-square discriminant, and $p$ is a prime.  Then 
\begin{equation}\label{eqn:Hecke}
\PPkDD{D}\Big|_{2-2k}T_p = \PPkDD{Dp^2} + p^{-k}\left(\frac{D}{p}\right)\PPkDD{D} +p^{1-2k}\PPkDD{\frac{D}{p^2}},
\end{equation}
where $\PPkDD{\frac{D}{p^2}}=0$ if $p^2\nmid D$.
\end{theorem}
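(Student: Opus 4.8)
The plan is to adapt to $\PPkD$ the computation of the Hecke action on $\fkD$ due to Zagier \cite{ZagierRQ} and worked out explicitly by Parson \cite{Parson}. Write, for a positive non-square discriminant $\Delta$,
\[
\PPkDD{\Delta}(\tau)=\frac{\Delta^{\frac12-k}}{\binom{2k-2}{k-1}\pi}\sum_{Q\in\mathcal Q_\Delta}\varphi_Q(\tau),\qquad \varphi_Q(\tau):=\sgn\!\left(a|\tau|^2+bx+c\right)Q(\tau,1)^{k-1}\,\psi\!\left(\frac{D_Q\,y^2}{|Q(\tau,1)|^2}\right),
\]
where $Q=[a,b,c]$ and $D_Q:=b^2-4ac$. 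Using the definition \eqref{eqn:Heckedef}, I would first expand $\PPkD\Big|_{2-2k}T_p$ as a fixed combination — with the normalizing powers of $p$ prescribed there — of the slashes $\PPkD\Big|_{2-2k}M$ as $M$ runs over the $p+1$ coset representatives $\sm{p}{0}{0}{1}$ and $\sm{1}{j}{0}{p}$ $(0\le j<p)$ of $\Gamma_1\backslash\Gamma_1\sm{1}{0}{0}{p}\Gamma_1$, and then reorganize the resulting double sum over binary quadratic forms.

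The first ingredient is the covariance of the summand: for $M\in\GL_2^+(\Q)$ with bottom row $(c_M,d_M)$, and for any $Q$ with $D_Q>0$ non-square,
\[
\varphi_Q(M\tau)=(c_M\tau+d_M)^{2-2k}\,\varphi_{Q\circ M}(\tau),
\]
where $Q\circ M$ is the usual right action of $\GL_2$ on binary quadratic forms, so that $D_{Q\circ M}=(\det M)^2D_Q$. Beyond the standard homogeneity of $Q(\tau,1)$ under $M$, the new point — the one that differs from the $\fkD$ computation — is that both $\sgn(a|\tau|^2+bx+c)$ and the argument of $\psi$ are \emph{genuinely invariant} under $(\tau,Q)\mapsto(M\tau,Q\circ M)$; this I would deduce from the elementary identity
\[
|Q(\tau,1)|^2=\left(a|\tau|^2+bx+c\right)^2+D_Q\,y^2,
\]
which shows that the sign quantity equals $\pm\sqrt{|Q(\tau,1)|^2-D_Q y^2}$ and hence transforms like $|Q(\tau,1)|$, while $D_Q y^2/|Q(\tau,1)|^2$ is outright invariant; that the sign is $+$ rather than $-$ is a short check using $\det M>0$ and connectedness of $\GL_2^+(\R)$. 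I would also record the scaling relation $\varphi_{pR}=p^{\,k-1}\varphi_R$ for any form $R$ (all powers of $p$ from $(pR)(\tau,1)=pR(\tau,1)$, $(pR)_\tau=pR_\tau$, $D_{pR}=p^2D_R$ cancelling in the argument of $\psi$) — note it is exactly here that the exponent differs from the $\fkD$ case, where instead $(pR)(\tau,1)^{-k}=p^{-k}R(\tau,1)^{-k}$, which is why the exponents in \eqref{eqn:Hecke} come out as $p^{-k}$ and $p^{1-2k}$ rather than $p^{k-1}$ and $p^{2k-1}$. With the covariance in hand, $\PPkD\Big|_{2-2k}T_p$ becomes a fixed power of $p$ times $\sum_{Q\in\QD}\sum_M\varphi_{Q\circ M}$, a finite sum of $\varphi$'s now indexed by forms of discriminant $p^2D$.

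The heart of the argument is the combinatorial regrouping of the multiset $\{\,Q\circ M:\ M\ \text{a representative},\ Q\in\QD\,\}$. This is the classical fact underlying the Hecke formula for $\fkD$ (see \cite{Parson}): one determines how many of these $(p+1)\,|\QD|$ forms have content prime to $p$ (these exhaust the content-prime-to-$p$ part of $\QDp$), how many are of the form $pR$ with $R\in\QD$, and — when $p^2\mid D$ — how many are of the form $p^2R''$ with $R''\in\mathcal Q_{D/p^2}$, the relevant multiplicities being governed by congruence counts and producing the Legendre symbol $\leg{D}{p}$. Feeding in $\varphi_{pR}=p^{k-1}\varphi_R$ and $\varphi_{p^2R''}=p^{2(k-1)}\varphi_{R''}$, and balancing the outer normalizations $\Delta^{\frac12-k}$ against the powers of $p$ from \eqref{eqn:Heckedef}, the three families of forms contribute exactly $\PPkDD{Dp^2}$, $p^{-k}\leg{D}{p}\,\PPkDD{D}$ and $p^{1-2k}\PPkDD{D/p^2}$, which is \eqref{eqn:Hecke}. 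This identity would be proved first for $\tau$ outside the (measure-zero) union $\ED\cup E_{Dp^2}\cup E_{D/p^2}$, where all functions involved are real analytic and the various sign functions are locally constant; since the right side is a weight $2-2k$ locally harmonic Maass form by Theorem \ref{thm:converge}, the identity simultaneously exhibits the Hecke translate $\PPkD\Big|_{2-2k}T_p$ as one, and \eqref{eqn:Hecke} then holds wherever both sides are defined.

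I expect the main obstacle to be this last combinatorial bookkeeping — controlling how the $(p+1)\,|\QD|$ forms $Q\circ M$ redistribute over the discriminants $p^2D$, $D$ and $D/p^2$ with the correct multiplicities, in particular correctly treating imprimitive forms and the ramified cases $p\mid D$ and $p^2\mid D$. By contrast the analytic input (the covariance of $\varphi_Q$) is routine once the identity $|Q(\tau,1)|^2=(a|\tau|^2+bx+c)^2+D_Q y^2$ is available, and one can in principle simply quote the corresponding step from the $\fkD$ computation with the single substitution of the scaling exponent $k-1$ for $-k$. One tempting shortcut — applying $\xi_{2-2k}$ and $\DD^{2k-1}$, and invoking Theorem \ref{thm:xiDk-1} together with the known Hecke action on $\fkD$ — reduces \eqref{eqn:Hecke} to its local-polynomial part; but since $\fkD\equiv 0$ for several small $k$ (precisely the cases in which $\PPkD$ is itself locally a polynomial, cf. the remark after \eqref{eqn:PPkDdef}), this does not bypass the combinatorial core and would still require separately controlling the local polynomials $P_{\CC}$ of Theorem \ref{thm:PPkDexpansion}.
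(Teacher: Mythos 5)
Your proposal is correct and follows essentially the same route as the paper: both reduce the identity to Parson's enumeration of how the forms $Q\circ M$ redistribute over discriminants $Dp^2$, $D$, $D/p^2$, using the invariance of the sign and of the argument of $\psi$ together with the homogeneity of degree $k-1$ (rather than $-k$) to get the exponents $p^{-k}$, $p^{1-2k}$. The only organizational difference is that the paper first proves the formula for an intermediary function $\PPMD{\Delta g^2}$ summed over \emph{primitive} forms and then reassembles $\PPkD=D^{-k/2}\sum_{g\mid f}\PPMD{\Delta g^2}$, whereas you propose to stratify the full multiset by content in one step; this is the same bookkeeping in a different order.
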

\begin{remark}
The fact that the right hand side of \eqref{eqn:Hecke} looks like the formula for the half-integral weight $\frac{3}{2}-k$ Hecke operator hints towards a connection between integral weight $2-2k$ and half-integral weight $\frac{3}{2}-k$ objects, mirroring the behavior for weight $2k$ and $k+\frac{1}{2}$ cusp forms coming from the Shintani and Shimura lifts.  In light of this, there could be some relation with the results in \cite{DIT} in the case $k=1$, which is not considered in this paper.
\end{remark}
The paper is organized as follows.  In Section \ref{sec:prelim} we give some background and a formal definition of locally harmonic Maass forms.  In Section \ref{sec:hyperbolic} we explain the interpretation of $\PPkD$ as a (linear combination of) hyperbolic Poincar\'e series.  We next show compact convergence in Section \ref{sec:converge}.  Section \ref{sec:exceptional} is devoted to a discussion about the exceptional set $\ED$.  Section \ref{sec:xiDk-1} is devoted to proving Theorem \ref{thm:xiDk-1}.  The expansion given in Theorem \ref{thm:PPkDexpansion} is proven in Section \ref{sec:expansion}.  Combining this with the results of the previous sections, we conclude Theorem \ref{thm:converge}.  In Section \ref{sec:periodpoly} we connect the polynomials $P_{\CC}$ from Theorem \ref{thm:PPkDexpansion} to the period polynomial of $\fkD$ in order to prove Theorem \ref{thm:ratperiod}.  We conclude the paper with the proof of Theorem \ref{thm:Hecke} in Section \ref{sec:Hecke} 
followed by a discussion about the interrelation with the results of \cite{DITrational} in Section \ref{sec:DIT}.

\section*{Acknowledgements}
The authors would like to thank Don Zagier for useful conversation involving the polynomials $P_{\CC}$ and the wall-crossing behavior of $\PPkD$ as well as Anton Mellit for helpful discussion concerning the calculation of the constant term of the Fourier expansion of our functions.  The authors also thank Pavel Guerzhoy for useful conversation.

\section{Harmonic weak Maass forms and locally harmonic Maass forms}\label{sec:prelim}
In this section, we recall the definition of harmonic weak Maass forms and introduce a formal definition of locally harmonic Maass forms.  A good background reference for harmonic weak Maass forms is \cite{BruinierFunke}.  As usual, we let $|_{2k}$ denote the \begin{it}weight $2k\in 2\Z$ slash-operator\end{it}, defined for $f:\H\to \C$ and $\gamma=\left(\begin{smallmatrix}a&b\\c&d\end{smallmatrix}\right)\in \Gamma_1$ by 
$$
f\Big|_{2k}\gamma\left(\tau\right):=\left(c\tau+d\right)^{-2k} f\left(\gamma \tau\right),
$$
where $\gamma\tau:=\frac{a\tau+b}{c\tau+d}$ is the action by fractional linear transformations.  

For $k\in \N$, a \begin{it}harmonic weak Maass form\end{it} $\mathcal{F}:\H\to\C$ of weight $2-2k$ for $\Gamma_1$ is a real analytic function satisfying:
\noindent

\noindent
\begin{enumerate}
\item $\mathcal{F}|_{2-2k} \gamma\left(\tau\right) = \mathcal{F}\left(\tau\right)$ for every $\gamma\in \Gamma_1$,
\item $\Delta_{2-2k}\left(\mathcal{F}\right)=0$,
\item $\mathcal{F}$ has at most linear exponential growth at $i\infty$.
\end{enumerate}
As noted in the introduction, the differential operators $\xi_{2-2k}$ and $\DD^{2k-1}$ naturally occur in the theory of harmonic weak Maass forms.  More precisely, for a harmonic weak Maass form $\mathcal{F}$, one has $\xi_{2-2k}\left(\mathcal{F}\right), \DD^{2k-1}\left(\mathcal{F}\right)\in M_{2k}^!$, the space of weight $2k$ weakly holomorphic modular forms (i.e., those meromorphic modular forms whose poles occur only at the cusps).  It is well known that the operator $\xi_{2-2k}$ commutes with the group action of $\SL_2\left(\R\right)$.  Moreover, by Bol's identity (\cite{Poincare}, see also \cite{Eichler} or \cite{BruinierOnoRhoades}, for a more modern usage), the operator $\DD^{2k-1}$ also commutes with the group action of $\SL_2\left(\R\right)$.  Furthermore, a direct calculation shows that 
\begin{equation}\label{eqn:Deltaxigen}
\Delta_{2-2k}=-\xi_{2k}\xi_{2-2k}.
\end{equation}

Each harmonic weak Maass form $\mathcal{F}$ naturally splits into a holomorphic part and a non-holomorphic part.  Indeed, in the special case that $\xi_{2-2k}\left(\mathcal{F}\right)=f\in S_{2k}$ (which is the only case relevant to this paper), one can show that $\mathcal{F}-f^*$ is holomorphic on $\H$, where $f^*$ was defined in \eqref{eqn:f*def}.  We hence call $f^*$ the \begin{it}non-holomorphic part\end{it} of $\mathcal{F}$ and $\mathcal{F}-f^*$ the \begin{it}holomorphic part.\end{it}  While the holomorphic part is obviously annihilated by $\xi_{2-2k}$, an easy calculation shows that the non-holomorphic part is annihilated by $\DD^{2k-1}$.  From this one also immediately sees that $\DD^{2k-1}\left(\mathcal{F}\right)=\DD^{2k-1}\left(\mathcal{F}-f^*\right)$ is holomorphic.  

We next define the new automorphic objects which we investigate in this paper.  A weight $2-2k$ \begin{it}locally harmonic Maass form\end{it} for $\Gamma_1$ with \begin{it}exceptional set\end{it} $\ED$ (defined in \eqref{eqn:EDdef}) is a function $\mathcal{F}:\H\to \C$ satisfying:
\noindent

\noindent
\begin{enumerate}
\item For every $\gamma\in \Gamma_1$, $\mathcal{F}\big|_{2-2k}\gamma = \mathcal{F}$.
\item For every $\tau\in \H \setminus \ED$, there is a neighborhood $N$ of $\tau$ in which $\mathcal{F}$ is real analytic and $\Delta_{2-2k}\left(\mathcal{F}\right)=0$.
\item 
For $\tau\in\ED$ one has 
$$
\mathcal{F}\left(\tau\right) =\frac{1}{2}\lim_{w\to 0^+}\left(\mathcal{F}\left(\tau+iw\right) + \mathcal{F}\left(\tau-iw\right)\right) \qquad (w\in \R).
$$
\item
The function $\mathcal{F}$ exhibits at most polynomial growth towards $i\infty$.
\end{enumerate}
Since the theory of harmonic weak Maass forms has proven so fruitful, it might be interesting to further investigate the properties of functions in the space of locally harmonic Maass forms.

\section{Locally harmonic Maass forms and hyperbolic Poincar\'e series}\label{sec:hyperbolic}
In this section, we define Petersson's more general hyperbolic Poincar\'e series \cite{Petersson}, which span the space $S_{2k}$, and describe their connection to \eqref{eqn:fkDdef}.  In addition, we define a weight $2-2k$ locally harmonic hyperbolic Poincar\'e series which basically maps to Petersson's hyperbolic Poincar\'e series under both $\xi_{2-2k}$ and $\DD^{2k-1}$ (see Proposition \ref{prop:xiDk-1}).

Suppose that $D>0$ is a non-square discriminant and $\narrow\subseteq \QD$ is a \begin{it}narrow equivalence class\end{it} of integral binary quadratic forms (that is, there exists $Q_0\in\QD$ such that $\narrow=:\left[Q_0\right]$ consists of precisely those $Q\in \QD$ which are $\Gamma_1$-equivalent to $Q_0$).  One defines 
\begin{equation}\label{eqn:fkAdef}
\PP\left(\tau\right):=\frac{\left(-1\right)^k D^{k-\frac{1}{2}}}{\binom{2k-2}{k-1}\pi}\sum_{\left[a,b,c\right]\in \narrow} \left(a\tau^2+b\tau+c\right)^{-k}\in S_{2k}.
\end{equation}
These functions were also studied by Kohnen and Zagier \cite{KohnenZagierRational} and Kramer \cite{Kramer} proved that they generate the entire space $S_{2k}$.

In the spirit of \eqref{eqn:PPkDdef}, we define 
\begin{equation}\label{eqn:hypPoincMaass3}
\PPM\left(\tau\right):=\frac{\left(-1\right)^{k}D^{\frac{1}{2}-k}}{\binom{2k-2}{k-1}\pi}\sum_{Q=\left[a,b,c\right]\in \narrow} \sgn\left(a\left|\tau\right|^2 + bx +c\right) Q\left(\tau,1\right)^{k-1} \psi\left(\frac{Dy^2}{\left|Q\left(\tau,1\right)\right|^2_{\phantom{-}}}\right),
\end{equation}
where $\psi$ was given in \eqref{eqn:psidef}.  We  see in Theorem \ref{thm:localMaass} that $\PPM$ is a locally harmonic Maass form with exceptional set $\ED$.

As alluded to in the introduction, \eqref{eqn:fkAdef} is not the definition given by Petersson (in fact, the definition \eqref{eqn:fkAdef} was given in \cite{KohnenCoeff,KohnenZagier}).  Since we make use of Petersson's definition repeatedly throughout the paper, we now describe Petersson's construction and give the link between the two definitions.  Let $\eta, \eta'$ be real conjugate \begin{it}hyperbolic fixed points\end{it} of $\SL_2\left(\R\right)$ (that is, there exists a matrix $\gamma\in \SL_2\left(\R\right)$ fixing $\eta$ and $\eta'$).  We call such a pair of points a \begin{it}hyperbolic pair.\end{it}  Denote the group of matrices in $\Gamma_1$ fixing $\eta$ and $\eta'$ by $\Gamma_{\eta}$.  The group $\Gamma_{\eta}/\left\{\pm I\right\}$ is an infinite cyclic subgroup of $\Gamma_1/\left\{\pm I\right\}$ and is generated by 
$$
g_{\eta}:=\pm \left(\begin{matrix} \frac{t+bu}{2}& cu\\ -au & \frac{t- bu}{2}\end{matrix}\right),
$$
where $\eta=\frac{-b\pm \sqrt{b^2-4ac}}{2a}$ and $t,u\in \N$ give the smallest solution to the Pell equation $t^2-D u^2=4$.  For $Q=\left[a,b,c\right]$, the subgroup $\Gamma_{\eta}$ furthermore preserves the geodesic
\begin{equation}\label{eqn:SQdef}
S_Q:=\left\{ \tau\in \H: a\left|\tau\right|^2 + b\re\left(\tau\right) + c =0\right\},
\end{equation}
which is important in our study since the exceptional set $\ED$ (defined in \eqref{eqn:EDdef}) decomposes as  $\ED=\bigcup_{Q\in \QD} S_Q$.  These semi-circles have played an important role in the interrelation between integral and half-integral weight modular forms \cite{KohnenCoeff,Shintani}.

Let $A\in \SL_2\left(\R\right)$ satisfy $A\eta=\infty$ and $A\eta'=0$.  We note that one may choose
\begin{equation}\label{eqn:AQdef}
A=A_{\eta}:= \pm \frac{1}{\sqrt{\left|\eta-\eta'\right|}} \left( \begin{matrix}1 & -\eta'\\ -\sgn\left(\eta-\eta'\right) &\sgn\left(\eta-\eta'\right) \eta\end{matrix}\right)\in\SL_2\left(\R\right).
\end{equation}
Since $g_{\eta}$ preserves the semi-circle $S_Q$, $A_{\eta}g_{\eta} A_{\eta}^{-1}$ is a scaling matrix $\left(\begin{smallmatrix} \zeta & 0 \\ 0 & \zeta^{-1}\end{smallmatrix}\right)$ for some $\zeta\in \R$ (see \cite{ImOS} for further details and helpful diagrams).

For $h_k\left(\tau\right):=\tau^{-k}$ (the constant term of the hyperbolic expansion of a modular form), we now define Petersson's classical hyperbolic Poincar\'e series \cite{Petersson}
\begin{equation}\label{eqn:hypPoinc}
\PPeta\left(\tau\right):=\sum_{\gamma\in \Gamma_{\eta}\backslash \Gamma_1}  h_k\Big|_{2k} A\gamma\left(\tau\right),
\end{equation}
which converges compactly for $k>1$.  By construction, $\PPeta$ satisfies weight $2k$ modularity and is holomorphic.  Petersson proved that indeed $\PPeta$ is a cusp form and it was later shown that
\begin{equation}\label{eqn:PPetaPP}
\PPeta=\binom{2k-2}{k-1}\pi D^{\frac{1-k}{2}}\PP
\end{equation}
for $\narrow=\left[Q_0\right]$, where $Q_0$ has roots $\eta$, $\eta'$ \cite{Katok}.

We move on to our construction of a weight $2-2k$ hyperbolic Poincar\'e series.  Define
\begin{equation}\label{eqn:varphidef}
\varphi\left(v\right):=\int_{0}^{v}\sin\left(u\right)^{2k-2}du.
\end{equation}
Noting that
$$
\left|a\tau^2+b\tau+c\right|^2 = Dy^2+\left(a\left|\tau\right|^2+bx+c\right)^2,
$$
we see that 
$$
\arcsin\left(\frac{\sqrt{D}y}{\left|a\tau^2+b\tau+c\right|}\right) =\arctan\left|\frac{\sqrt{D}y}{a\left|\tau\right|^2+bx+c}\right|.
$$
Therefore, using the fact that $\cos\left(\theta\right)\geq 0$ for $0\leq \theta\leq \frac{\pi}{2}$, the change of variables $u=\sin\left(\theta\right)^2$ in the definition of the incomplete $\beta$-function yields (recall definition \eqref{eqn:psidef})
\begin{equation}\label{eqn:varphibeta}
\psi\left(\frac{Dy^2}{\left|Q\left(\tau,1\right)\right|^2_{\phantom{-}}}\right) = \frac{1}{2}\beta\left(\frac{Dy^2}{|a\tau^2+b\tau+c|^2};k-\frac{1}{2},\frac{1}{2}\right) = \varphi\left(\arctan\left|\frac{\sqrt{D}y}{a\left|\tau\right|^2+bx+c}\right|\right),
\end{equation}
where we understand the arctangent to be equal to $\frac{\pi}{2}$ if $a\left|\tau\right|^2+bx+c=0$.  

Following our construction in the introduction, we set
\begin{equation}\label{eqn:varphi*def}
\phistr\left(\tau\right):=
\tau^{k-1} \sgn\left(x\right) \varphi\left(\arctan\left|\frac{y}{x}\right|\right).
\end{equation}
We now define the weight $2-2k$ \begin{it}locally harmonic hyperbolic Poincar\'e series\end{it} by
\begin{equation}\label{eqn:hypPoincMaass}
\PPMeta\left(\tau\right):=\sum_{\gamma\in \Gamma_{\eta}\backslash \Gamma_1}  \phistr\Big |_{2-2k} A\gamma\left(\tau\right).
\end{equation}
We show in Proposition \ref{prop:converge} that $\PPMeta$ converges compactly for $k>1$.

We want to show that $\PPMeta$ and $\PPM$ are connected in a way which is similar to the relation \eqref{eqn:PPetaPP} between $\PPeta$ and $\PP$.  For a hyperbolic pair $\eta, \eta'\in \R$ with generator $g_{\eta}=\left(\begin{smallmatrix}\alpha&\beta\\ \gamma&\delta\end{smallmatrix}\right)$ of $\Gamma_{\eta}$, chosen so that $\sgn\left(\gamma\right)=\sgn\left(\eta-\eta'\right)$, we define
$$
Q_{\eta}(\tau,w):= \gamma \tau^2+\left(\delta-\alpha\right) \tau w -\beta w^2.
$$
Conversely, for $Q=[a,b,c]\in\QD$, we choose the roots $\eta_Q=\frac{-b+\sqrt{D}}{2a}$, $\eta_Q'=\frac{-b-\sqrt{D}}{2a}$ and use the fact that $Q=Q_{\eta_Q}$ to obtain a correspondence.  Note that $\sgn(\eta_Q-\eta_Q')=\sgn(a)$.  We furthermore define $A_{Q}:=A_{\eta_Q}$, where $A_{\eta}$ was defined in \eqref{eqn:AQdef}.  For $Q\in \QD$, we denote the action of $\gamma\in \Gamma_1$ on $Q$ by $Q\circ \gamma$.  We first need to relate $A_{\eta}\gamma$ and $A_Q$.
\begin{lemma}\label{lem:AGamAQ}
For a hyperbolic pair $\eta,\eta'$, $\gamma=\left(\begin{smallmatrix}a&b\\c&d\end{smallmatrix}\right)\in \Gamma_1$, and $Q=Q_{\eta}\circ \gamma$, there exists a constant $r\in \R^+$ so that
\begin{equation}\label{eqn:scale}
A_{\eta}\gamma = \left(\begin{matrix}\sqrt{r} &0\\ 0 &\frac{1}{\sqrt{r}}\end{matrix}\right) A_Q
\end{equation}
and hence in particular
$$
\arg\left(A_{\eta}\gamma\tau\right) = \arg\left(A_Q\tau\right)\qquad\text{ and }\qquad \sgn\left(\re\left(A_{\eta}\gamma\tau\right)\right)=\sgn\left(\re\left(A_Q\tau\right)\right).
$$
Moreover, 
\begin{equation}\label{eqn:AQrel}
\tau\Big|_{-2} A_{\eta}\gamma\left(\tau\right)=\tau\Big|_{-2} A_Q\left(\tau\right) = \frac{-Q\left(\tau,1\right)}{\sqrt{D}}.
\end{equation}
\end{lemma}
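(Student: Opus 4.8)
The plan is to verify the scaling identity \eqref{eqn:scale} by a direct matrix computation, and then deduce the remaining statements as formal consequences. First I would recall that by definition $A_\eta$ sends $\eta\mapsto\infty$ and $\eta'\mapsto 0$, and that $A_Q=A_{\eta_Q}$ sends $\eta_Q\mapsto\infty$, $\eta_Q'\mapsto 0$. Now $\gamma=\left(\begin{smallmatrix}a&b\\c&d\end{smallmatrix}\right)$ acts on the binary quadratic form $Q_\eta$ by $Q=Q_\eta\circ\gamma$, and I would use the standard fact that the roots of $Q_\eta\circ\gamma$ are the preimages $\gamma^{-1}\eta$, $\gamma^{-1}\eta'$ of the roots of $Q_\eta$ (with the sign/ordering matching because $\sgn(\gamma)$ was fixed to equal $\sgn(\eta-\eta')$ in the setup preceding the lemma, and likewise $\sgn(\eta_Q-\eta_Q')=\sgn(a)$ as noted). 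Hence $A_\eta\gamma$ sends $\eta_Q=\gamma^{-1}\eta\mapsto\infty$ and $\eta_Q'=\gamma^{-1}\eta'\mapsto 0$, exactly as $A_Q$ does. Therefore $A_\eta\gamma A_Q^{-1}$ fixes both $0$ and $\infty$, so it is a diagonal matrix $\left(\begin{smallmatrix}\sqrt r&0\\0&1/\sqrt r\end{smallmatrix}\right)$ for some $r\in\R^\times$; the positivity $r>0$ follows because $A_\eta\gamma$ and $A_Q$ both lie in $\SL_2(\R)$ and both map $\H$ to $\H$ preserving orientation, so the diagonal entries must have the same sign, which after the $\SL_2$ normalization we may take to be positive (adjusting the $\pm$ sign in $A_Q$ if needed). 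This gives \eqref{eqn:scale}.

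Next, the two displayed consequences are immediate: a diagonal matrix $\left(\begin{smallmatrix}\sqrt r&0\\0&1/\sqrt r\end{smallmatrix}\right)$ with $r>0$ acts on $\H$ by $\tau\mapsto r\tau$, which is a positive real dilation; it therefore preserves the argument and the sign of the real part of any point of $\H$. Applying this to $\tau\mapsto A_Q\tau$ yields $\arg(A_\eta\gamma\tau)=\arg(r\cdot A_Q\tau)=\arg(A_Q\tau)$ and $\sgn(\re(A_\eta\gamma\tau))=\sgn(\re(A_Q\tau))$.

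For the last identity \eqref{eqn:AQrel}, I would compute $\tau|_{-2}A_Q(\tau)$ directly from the explicit formula \eqref{eqn:AQdef} for $A_{\eta_Q}$ with $\eta=\eta_Q=\frac{-b+\sqrt D}{2a}$, $\eta'=\eta_Q'=\frac{-b-\sqrt D}{2a}$, so that $\eta-\eta'=\frac{\sqrt D}{a}$, $|\eta-\eta'|=\frac{\sqrt D}{|a|}$, and $\sgn(\eta-\eta')=\sgn(a)$. Writing $A_Q=\frac{1}{\sqrt{|\eta-\eta'|}}\left(\begin{smallmatrix}1&-\eta'\\-\sgn(a)&\sgn(a)\eta\end{smallmatrix}\right)$, the lower row gives the automorphy factor, and $\tau|_{-2}A_Q(\tau)=(c_A\tau+d_A)^{2}(A_Q\tau)$ where $c_A\tau+d_A=\frac{1}{\sqrt{|\eta-\eta'|}}(-\sgn(a)\tau+\sgn(a)\eta)=\frac{\sgn(a)}{\sqrt{|\eta-\eta'|}}(\eta-\tau)$ and $A_Q\tau=\frac{\tau-\eta'}{-\sgn(a)(\tau-\eta)}$; multiplying out and using $Q(\tau,1)=a(\tau-\eta)(\tau-\eta')$ together with $|\eta-\eta'|=\sqrt D/|a|$ collapses everything to $\frac{-Q(\tau,1)}{\sqrt D}$ after tracking the sign of $a$. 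Then \eqref{eqn:scale} gives $\tau|_{-2}A_\eta\gamma(\tau)=\tau|_{-2}\left(\begin{smallmatrix}\sqrt r&0\\0&1/\sqrt r\end{smallmatrix}\right)A_Q(\tau)$, and since $\tau|_{-2}\left(\begin{smallmatrix}\sqrt r&0\\0&1/\sqrt r\end{smallmatrix}\right)(\tau)=r^{-1}\cdot r\tau=\tau$ acts trivially under the weight $-2$ slash (a direct check: automorphy factor $r^{-1/2}$ squared times $r\tau$ equals... wait, $(1/\sqrt r)^{-2}\cdot r\tau = r\cdot r\tau$), I would instead simply note the weight $-2$ cocycle relation $h|_{-2}(\delta_1\delta_2)=(h|_{-2}\delta_1)|_{-2}\delta_2$ with $h(\tau)=\tau$ and observe $\tau|_{-2}\left(\begin{smallmatrix}\sqrt r&0\\0&1/\sqrt r\end{smallmatrix}\right)=r^{-1}\tau\mapsto$ composed with the remaining slash reproduces $\tau|_{-2}A_Q$; the cleanest route is just to invoke the cocycle property of $|_{-2}$ and the fact that $h(\tau)=\tau$ is an eigenfunction of the diagonal slash, so the scalars cancel. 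The main obstacle here is purely bookkeeping: keeping the $\pm$ signs in \eqref{eqn:AQdef} and the sign of $a$ consistent throughout, and making sure the root-ordering convention $\eta_Q=\frac{-b+\sqrt D}{2a}$ matches the convention under which \eqref{eqn:AQdef} was written; no conceptual difficulty is expected.
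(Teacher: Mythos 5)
Your overall route matches the paper's: both arguments come down to showing that $A_{\eta}\gamma$ and $A_Q$ induce the same M\"obius transformation up to a positive dilation, and then exploiting the invariance of $h(\tau)=\tau$ under the weight $-2$ slash by a scaling matrix. (The paper gets there by explicitly computing $A_{\eta}\gamma\tau = \sgn(\eta-\eta')\frac{a-c\eta'}{a-c\eta}\cdot\frac{\tau-\gamma^{-1}\eta'}{-\tau+\gamma^{-1}\eta}$ and reading off $r=\bigl|\frac{a-c\eta'}{a-c\eta}\bigr|$; your fixed-point argument at $0$ and $\infty$ is a mild repackaging of the same idea. Also, your first computation of $\tau|_{-2}\left(\begin{smallmatrix}\sqrt r&0\\0&1/\sqrt r\end{smallmatrix}\right)=r^{-1}\cdot r\tau=\tau$ was correct; the second-guessing used the wrong sign in the exponent of the automorphy factor, since with the paper's convention $f|_{2k}\gamma=(c\tau+d)^{-2k}f(\gamma\tau)$ the weight $-2$ factor is $(c\tau+d)^{+2}$.)

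The one genuine gap is the root-ordering claim. You assert that $A_{\eta}\gamma$ sends $\eta_Q=\gamma^{-1}\eta\mapsto\infty$ and $\eta_Q'=\gamma^{-1}\eta'\mapsto 0$ ``because the sign conventions match,'' but this is exactly the nontrivial content of the lemma and does not follow formally from the two conventions $\sgn(\alpha)=\sgn(\eta-\eta')$ and $\sgn(\eta_Q-\eta_Q')=\sgn(a_Q)$. A priori $\gamma^{-1}$ could send the ordered pair $(\eta,\eta')$ to $(\eta_Q',\eta_Q)$, in which case $A_{\eta}\gamma A_Q^{-1}$ would swap $0$ and $\infty$ and hence be anti-diagonal, and \eqref{eqn:scale} would fail. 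What is needed, and what the paper supplies, is the verification
$$
\sgn\left(a_Q\right)=\sgn\left(Q_{\eta}\left(a,c\right)\right)=\sgn\left(\alpha\right)\sgn\left(\left(a-c\eta\right)\left(a-c\eta'\right)\right)=\sgn\left(\frac{\eta-\eta'}{\left(a-c\eta\right)\left(a-c\eta'\right)}\right)=\sgn\left(\gamma^{-1}\eta-\gamma^{-1}\eta'\right),
$$
which is what forces $\gamma^{-1}\eta=\eta_Q$ rather than $\eta_Q'$. You flag the sign bookkeeping as a task at the end, but this particular check is the heart of the proof rather than routine bookkeeping, so it needs to be carried out explicitly. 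Everything else in your plan goes through.
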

\begin{proof}
A direct calculation, using \eqref{eqn:AQdef}, yields
\begin{equation}\label{eqn:AGamAQ}
A_{\eta}\gamma \tau=\sgn(\eta-\eta')\frac{a-c\eta'}{a-c\eta}  \left(\frac{\tau - \gamma^{-1}\eta'}{-\tau + \gamma^{-1}\eta}\right) .
\end{equation}
Denote $Q_{\eta}=[\alpha,\beta,\delta]$ and $Q=\left[a_Q,b_Q,c_Q\right]$ and recall that we have chosen $Q_{\eta}$ (resp. $\eta_Q$) such that $\sgn\left(\alpha\right)=\sgn\left(\eta-\eta'\right)$ (resp. $\sgn(\eta_Q-\eta_Q')=\sgn(a_Q)$).  Hence $\eta-\eta'= \frac{\sqrt{D}}{\alpha}$ and one now concludes the second identity of \eqref{eqn:AQrel} after noting that 
$$
j\left(A_{\eta},\tau\right)=\mp \frac{\sgn\left(\eta-\eta'\right)}{\sqrt{\left|\eta-\eta'\right|}}\left(\tau - \eta\right).
$$
and applying \eqref{eqn:AGamAQ} with $\eta=\eta_Q$ and $\gamma=I$.  Since $Q=Q_{\eta}\circ \gamma$, $\gamma$ sends the roots of $Q$ to the roots of $Q_{\eta}$ and hence either $\gamma^{-1}\eta=\eta_Q$ or $\gamma^{-1}\eta'=\eta_Q$.  Since $\eta_Q,\eta_Q'$ are ordered by $\sgn(\eta_Q-\eta_Q')=\sgn(a_Q)$, the identity $\gamma^{-1}\eta=\eta_Q$ is verified by 
\begin{multline*}
\sgn\left(a_Q\right)=\sgn\left(Q_{\eta}\left(a,c\right)\right)= \sgn\left(\alpha\right)\sgn\left(\left(a-c\eta\right)\left(a-c\eta'\right)\right)\\
=\sgn\left(\frac{\eta-\eta'}{\left(a-c\eta\right)\left(a-c\eta'\right)}\right) = \sgn\left(\gamma^{-1}\eta-\gamma^{-1}\eta'\right).
\end{multline*}
Denoting $r:=\left|\frac{a-c\eta'}{a-c\eta}\right|$ and comparing \eqref{eqn:AGamAQ} with the definition \eqref{eqn:AQdef} of $A_Q$ yields
$$
A_{\eta}\gamma \tau = rA_Q\tau.
$$
One concludes \eqref{eqn:scale} from the fact that $A_{\eta}\gamma$ and $A_Q$ both have determinant $1$.  Since $\tau$ is invariant by slashing with a scaling matrix in weight $-2$, the second identity of \eqref{eqn:AQrel} follows, completing the proof.
\end{proof}

We now use Lemma \ref{lem:AGamAQ} to show that under the natural correspondence between narrow classes $\narrow\subseteq \QD$ and hyperbolic pairs $\eta,\eta'\in \R$ given above, one has:
\begin{lemma}\label{lem:PPMQ}
For every hyperbolic pair $\eta,\eta'$ and $\narrow=\left[Q_{\eta}\right]\subseteq\QD$, one has
$$
\PPMeta=\binom{2k-2}{k-1}\pi D^{\frac{k}{2}}\PPM.
$$
\end{lemma}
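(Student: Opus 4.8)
The plan is to unfold the Poincaré series $\PPMeta$ and identify its terms with the summands of $\PPM$, using Lemma \ref{lem:AGamAQ} as the main bridge. Recall that $\PPMeta(\tau)=\sum_{\gamma\in\Gamma_{\eta}\backslash\Gamma_1}\phistr|_{2-2k}A_{\eta}\gamma(\tau)$, where $\phistr(\tau)=\tau^{k-1}\sgn(x)\varphi(\arctan|y/x|)$. The first step is to rewrite $\phistr$ in a form adapted to slashing: since $\varphi(\arctan|y/x|)=\varphi(\arg(\tau))$ for $\tau=x+iy$ with $x>0$ and $\varphi$ is built so that the formula extends by the $\sgn(x)$ factor, I would express $\phistr|_{2-2k}A_{\eta}\gamma(\tau)$ in terms of the quantities $\arg(A_{\eta}\gamma\tau)$, $\sgn(\re(A_{\eta}\gamma\tau))$, and $j(A_{\eta}\gamma,\tau)$. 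Concretely, $\phistr|_{2-2k}A_{\eta}\gamma(\tau)=j(A_{\eta}\gamma,\tau)^{2k-2}\,(A_{\eta}\gamma\tau)^{k-1}\sgn(\re(A_{\eta}\gamma\tau))\,\varphi(\arg(A_{\eta}\gamma\tau))$, and one can fold $j(A_{\eta}\gamma,\tau)^{2k-2}(A_{\eta}\gamma\tau)^{k-1}$ into $\bigl(j(A_{\eta}\gamma,\tau)^2\cdot A_{\eta}\gamma\tau\bigr)^{k-1}\cdot j(A_{\eta}\gamma,\tau)^0$ — more precisely into $(\tau|_{-2}A_{\eta}\gamma(\tau))^{-(k-1)}\cdot(\text{something})$; I would track the weights carefully so that $\tau|_{-2}A_{\eta}\gamma(\tau)$ appears, since that is exactly what Lemma \ref{lem:AGamAQ} computes.

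The second step invokes Lemma \ref{lem:AGamAQ} termwise. As $\gamma$ runs over coset representatives for $\Gamma_{\eta}\backslash\Gamma_1$, the forms $Q=Q_{\eta}\circ\gamma$ run over the narrow class $\narrow=[Q_{\eta}]$ (bijectively, since $\Gamma_{\eta}$ is exactly the stabilizer of $Q_{\eta}$). Lemma \ref{lem:AGamAQ} gives $\arg(A_{\eta}\gamma\tau)=\arg(A_Q\tau)$, $\sgn(\re(A_{\eta}\gamma\tau))=\sgn(\re(A_Q\tau))$, and $\tau|_{-2}A_{\eta}\gamma(\tau)=\tau|_{-2}A_Q(\tau)=-Q(\tau,1)/\sqrt{D}$. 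So each summand of $\PPMeta$ becomes a quantity depending only on $Q$. Now I need two further identifications, both elementary via $A_Q$: first, $\arg(A_Q\tau)=\arctan\bigl|\sqrt{D}y/(a|\tau|^2+bx+c)\bigr|$, which I would get by writing $A_Q\tau$ explicitly from \eqref{eqn:AQdef} and computing its real and imaginary parts (the imaginary part is $\sqrt{D}y/|Q(\tau,1)|^2$ up to the determinant-$1$ normalization, matching the $\arcsin/\arctan$ identity already recorded before \eqref{eqn:varphibeta}); combined with \eqref{eqn:varphibeta} this turns $\varphi(\arg(A_Q\tau))$ into $\psi\bigl(Dy^2/|Q(\tau,1)|^2\bigr)$. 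Second, $\sgn(\re(A_Q\tau))=\sgn(a|\tau|^2+bx+c)$ — again read off from the explicit real part of $A_Q\tau$, whose sign is governed by the linear-in-$x$ expression $a|\tau|^2+bx+c$ (the denominator $|a-c\eta_Q|^2$-type factor being positive). This is the step where I must be most careful about signs: the $\sgn(\eta_Q-\eta_Q')=\sgn(a)$ normalization and the $\pm$ in \eqref{eqn:AQdef} both feed in, and getting the sign of $\re(A_Q\tau)$ to come out as $\sgn(a|\tau|^2+bx+c)$ rather than its negative is the crux.

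The third step is bookkeeping of the constants. Substituting the identifications back, the $\gamma$-sum over $\Gamma_{\eta}\backslash\Gamma_1$ becomes the $Q$-sum over $\narrow$, and $\phistr|_{2-2k}A_{\eta}\gamma(\tau)$ becomes a constant times $\sgn(a|\tau|^2+bx+c)\,Q(\tau,1)^{k-1}\psi\bigl(Dy^2/|Q(\tau,1)|^2\bigr)$. The constant comes from $(-Q(\tau,1)/\sqrt D)^{-(k-1)}$ being absorbed: I expect a factor $(-1)^{k-1}D^{(k-1)/2}=(-1)^{k-1}D^{k/2}D^{-1/2}$ relating the normalizations, which after comparing with the prefactors $\frac{(-1)^kD^{\frac12-k}}{\binom{2k-2}{k-1}\pi}$ in \eqref{eqn:hypPoincMaass3} and the claimed $\binom{2k-2}{k-1}\pi D^{k/2}$ should land exactly on $\PPMeta=\binom{2k-2}{k-1}\pi D^{k/2}\PPM$. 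A clean way to organize this is to note the weight-$2k$ analogue \eqref{eqn:PPetaPP}, $\PPeta=\binom{2k-2}{k-1}\pi D^{(1-k)/2}\PP$, was proved the same way with $h_k(\tau)=\tau^{-k}$ in place of $\phistr$; since $\phistr$ carries $\tau^{k-1}$ instead of $\tau^{-k}$, the power of $\tau|_{-2}A_Q(\tau)=-Q(\tau,1)/\sqrt D$ that gets produced changes from $D^{-k/2}Q(\tau,1)^{-k}$-type to $D^{(k-1)/2}Q(\tau,1)^{k-1}$-type, and this single change of exponent is what converts the power of $D$ in the constant from $D^{(1-k)/2}$ to $D^{k/2}$. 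The main obstacle is purely the sign analysis in step two — making sure $\sgn(\re(A_Q\tau))$ and the branch of $\arg$ line up with the $\sgn(a|\tau|^2+bx+c)$ and $\arctan|\cdot|$ appearing in \eqref{eqn:hypPoincMaass3} and \eqref{eqn:varphibeta}; everything else is a controlled computation. Convergence of $\PPMeta$, needed to justify the termwise manipulation, is granted by Proposition \ref{prop:converge}.
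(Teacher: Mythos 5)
Your proposal follows essentially the same route as the paper's proof: unfold $\PPMeta$, convert each coset term into a term indexed by $Q=Q_{\eta}\circ\gamma\in\narrow$ via Lemma \ref{lem:AGamAQ}, fold the automorphy factors into $\left(j\left(A_{\eta}\gamma,\tau\right)^2 A_{\eta}\gamma\tau\right)^{k-1}=\left(-Q\left(\tau,1\right)/\sqrt{D}\right)^{k-1}$, and then compute $\re\left(A_Q\tau\right)$ and $\im\left(A_Q\tau\right)$ explicitly so that \eqref{eqn:varphibeta} converts $\varphi$ of the resulting arctangent into the $\psi$ factor of \eqref{eqn:hypPoincMaass3}. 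The overall structure is sound and matches the paper.

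However, the one computational fact you assert at the step you yourself identify as the crux is stated with the wrong sign. From \eqref{eqn:AGamAQ} one finds (this is \eqref{eqn:AQre})
$$
\re\left(A_Q\tau\right)=-\frac{a\left|\tau\right|^2+bx+c}{|a|\,\left|-\tau+\eta_Q\right|^2},
$$
so $\sgn\left(\re\left(A_Q\tau\right)\right)=-\sgn\left(a\left|\tau\right|^2+bx+c\right)$, not $+\sgn\left(a\left|\tau\right|^2+bx+c\right)$ as you claim. This minus sign is essential: combined with the $\left(-1\right)^{k-1}$ coming from $\left(-Q\left(\tau,1\right)/\sqrt{D}\right)^{k-1}$ it produces the overall $\left(-1\right)^{k}$ that cancels against the $\left(-1\right)^k$ in the definition \eqref{eqn:hypPoincMaass3} of $\PPM$, yielding the stated positive constant $\binom{2k-2}{k-1}\pi D^{\frac{k}{2}}$. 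Carried through with your sign, the argument would end with $\PPMeta=-\binom{2k-2}{k-1}\pi D^{\frac{k}{2}}\PPM$. There is also a small internal inconsistency in your bookkeeping: you first (correctly) write the folded automorphy factor as $\left(j\left(A_{\eta}\gamma,\tau\right)^2\cdot A_{\eta}\gamma\tau\right)^{k-1}$, but then describe it as $\left(\tau|_{-2}A_{\eta}\gamma\left(\tau\right)\right)^{-(k-1)}$ and as contributing $D^{(k-1)/2}$; the correct contribution is $\left(-1\right)^{k-1}D^{-(k-1)/2}Q\left(\tau,1\right)^{k-1}$. Everything else --- the bijection $\Gamma_{\eta}\backslash\Gamma_1\leftrightarrow\narrow$, the identification of $\arctan\left|\im\left(A_Q\tau\right)/\re\left(A_Q\tau\right)\right|$ with $\arctan\left|\sqrt{D}y/\left(a\left|\tau\right|^2+bx+c\right)\right|$, and the appeal to \eqref{eqn:varphibeta} and to Proposition \ref{prop:converge} for the rearrangement --- is correct and is exactly what the paper does.
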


\begin{proof}
By Lemma \ref{lem:AGamAQ}, \eqref{eqn:hypPoincMaass} may be rewritten as 
\begin{equation}\label{eqn:hypPoincMaass2}
\PPMeta\left(\tau\right)=\frac{(-1)^{k-1}}{D^{\frac{k-1}{2}}}\sum_{Q\in \narrow} \sgn\left(\re\left(A_Q\tau\right)\right) Q\left(\tau,1\right)^{k-1} \varphi\left(\arctan\left|\frac{\im\left(A_Q\tau\right)}{\re\left(A_Q\tau\right)}\right|\right).
\end{equation}
We first note that $a\neq 0$ (since $D$ is not a square, by assumption).  From \eqref{eqn:AGamAQ} with $\eta=\eta_Q$ and $\gamma=I$, one concludes 
\begin{equation}
\label{eqn:AQre}\re\left(A_Q\tau\right) = -\frac{a\left|\tau\right|^2+b x +c}{|a| \left|-\tau+\eta_Q\right|^2},\qquad\qquad
\im\left(A_Q \tau\right) = \frac{y\sqrt{D}}{|a|\left|-\tau+\eta_Q\right|^2}.
\end{equation}
This allows one to rewrite $\arctan\left|\frac{\im\left(A_Q\tau\right)}{\re\left(A_Q\tau\right)}\right|$.  Using \eqref{eqn:varphibeta}, it follows that \eqref{eqn:hypPoincMaass2} equals \eqref{eqn:hypPoincMaass3}.
\end{proof}

\section{Convergence of $\PPM$}\label{sec:converge}
In this section we prove the convergence needed to show Theorem \ref{thm:converge}.  We need the following simple property of $\arctan\left|z\right|$ for $z\in \C$:
\begin{equation}\label{eqn:arctanbnd}
\arctan\left|z\right| \leq \min\left\{ \left|z\right|, \frac{\pi}{2}\right\}.
\end{equation}

For a convergence estimate, we  also employ the following formula of Zagier (\cite{ZagierMFwhose}, Prop. 3).  For a discriminant $0<D=\Delta f^2$ with $\Delta$ a fundamental discriminant and $\re(s)>1$, one has 
\begin{equation}\label{eqn:Zagier}
\sum_{a\in \N} \sum_{\substack{0\leq b<2a\\ b^2\equiv D\pmod{4a}}} a^{-s} = \frac{\zeta(s)}{\zeta\left(2s\right)}L_{\Delta}(s)\sum_{d\mid f}\mu(d)\chi_{\Delta}\left(d\right)d^{-s}\sigma_{1-2s}\left(\frac{f}{d}\right),
\end{equation}
where $L_{\Delta}(s):=L\left(s,\chi_{\Delta}\right)$ is the Dirichlet $L$-series associated to the quadratic character $\chi_{\Delta}(n):=\left(\frac{\Delta}{n}\right)$, $\mu$ is the M\"obius function, and $\sigma_s(n):=\sum_{d\mid n} d^{s}$.  
\begin{proposition}\label{prop:converge}
For $k>1$, $\PPM$ converges compactly on $\H$.
\end{proposition}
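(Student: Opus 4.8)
The plan is to bound the summand in \eqref{eqn:hypPoincMaass3} uniformly on a compact set $K\subset\H$ and reduce the convergence to an absolutely convergent Dirichlet series via Zagier's formula \eqref{eqn:Zagier}. First I would record the elementary estimate on the incomplete beta factor: since $0\le\psi(v)\le\psi(1)=\frac12\beta(1;k-\tfrac12,\tfrac12)$ is a finite constant, and by \eqref{eqn:varphibeta} together with \eqref{eqn:arctanbnd} one has
$$
\psi\!\left(\frac{Dy^2}{|Q(\tau,1)|^2}\right)=\varphi\!\left(\arctan\left|\frac{\sqrt{D}y}{a|\tau|^2+bx+c}\right|\right)\le \varphi\!\left(\frac{\sqrt{D}y}{|a|\tau|^2+bx+c|}\right)\le \int_0^{\sqrt{D}y/|a|\tau|^2+bx+c|}u^{2k-2}\,du,
$$
using $|\sin u|\le|u|$ in \eqref{eqn:varphidef}, and hence $\psi(Dy^2/|Q(\tau,1)|^2)\ll_{k,K} D^{k-\frac12}|Q(\tau,1)|^{-(2k-1)}$ because $|a|\tau|^2+bx+c|$ is comparable to $|Q(\tau,1)|$ up to the factor $\sqrt{Dy^2+(\cdots)^2}$. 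Combining this with the trivial bound $|Q(\tau,1)|^{k-1}$ on the other factor, the general term of \eqref{eqn:hypPoincMaass3} is $\ll_{k,K} D^{1/2}\,|Q(\tau,1)|^{-k}$, so it suffices to prove that
$$
\sum_{Q=[a,b,c]\in\QD}\frac{1}{|a\tau^2+b\tau+c|^{k}}
$$
converges compactly; note this is (up to a constant) the series defining $\fkD$ in \eqref{eqn:fkAdef}, restricted to a narrow class, so compact convergence here is exactly what is classically known for Zagier's cusp form.

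Next I would make that classical convergence quantitative in a way that uses \eqref{eqn:Zagier}. On a compact set $K$, $y$ is bounded above and below, so $|a\tau^2+b\tau+c|^2=Dy^2+(a|\tau|^2+bx+c)^2\gg_K (a|\tau|^2+bx+c)^2 + a^2$ when $|a|$ is large (comparing with $|a|\cdot y^2$), and more usefully $|a\tau^2+b\tau+c|\gg_K |a|$ together with $|a\tau^2+b\tau+c|\gg_K 1$ always. Splitting according to the size of $a$: for fixed $a>0$ the values $b\pmod{2a}$ are constrained by $b^2\equiv D\pmod{4a}$, and for each residue class the sum over the remaining (arithmetic progression of) values of $b$, hence of $c=(b^2-D)/(4a)$, of $|a\tau^2+b\tau+c|^{-k}$ converges and is $\ll_K a^{-k}\cdot(\text{number of }b\text{-classes})$ by comparison with $\int |a t^2 + \cdots|^{-k}\,dt$. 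This reduces the whole double sum, after handling $a=0$ (which is empty since $D$ is a non-square) and $a<0$ (symmetric), to
$$
\sum_{a\in\N}a^{-k}\#\{0\le b<2a:\ b^2\equiv D\ (\mathrm{mod}\ 4a)\}\ \ll\ \sum_{a\in\N}\sum_{\substack{0\le b<2a\\ b^2\equiv D\,(4a)}}a^{-(k-1)},
$$
which is exactly the left-hand side of \eqref{eqn:Zagier} with $s=k-1$. Since $k>1$ we have $\re(s)=k-1>0$; but \eqref{eqn:Zagier} is only asserted for $\re(s)>1$, so I would instead apply it with $s=k-1+\epsilon>1$ after first absorbing a factor: more precisely, from $|a\tau^2+b\tau+c|\gg_K\max(|a|,1)^{1/2}\cdot|a\tau^2+b\tau+c|^{1/2}$ type bounds (or simply iterating the estimate $|Q(\tau,1)|^k \gg_K |Q(\tau,1)|^{k-1-\epsilon}\cdot|a|^{?}$), one arranges the exponent of $a$ to exceed the threshold, and then the right-hand side of \eqref{eqn:Zagier}, a product of $\zeta(s)/\zeta(2s)$, $L_\Delta(s)$ and a finite Euler-type factor, is finite.

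The main obstacle is precisely this bookkeeping near the boundary case $k=2$ (where $s=k-1=1$ sits on the edge of the region of validity of \eqref{eqn:Zagier}) and more generally producing enough decay in $a$ from the geometric factor $|a\tau^2+b\tau+c|$ to land strictly inside $\re(s)>1$. The clean way to handle it: on compact $K$ one has $|a\tau^2+b\tau+c|\gg_K (1+|a|)$ uniformly (since $\im(a\tau^2+b\tau+c)=(2ax+b)y$ and the real part together control $|a|$ up to bounded factors once $x,y$ range over $K$), which already gives one full power of $a$; pairing this with the bound $\ll D^{k-1/2}|Q(\tau,1)|^{-(2k-1)}$ for $\psi$ rather than the crude $|Q(\tau,1)|^{-k}$ produces an effective exponent $2k-1-(k-1)=k$ on $|a|$, comfortably $>1$. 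Then the inner sum over the arithmetic progression of $c$'s converges by the integral comparison, the sum over $b$-classes mod $2a$ is handled by \eqref{eqn:Zagier}, and uniformity in $\tau\in K$ of all implied constants gives compact convergence. Finally, since each summand $\phistr|_{2-2k}A\gamma$ in \eqref{eqn:hypPoincMaass} is continuous on $\H$ and, by Lemma \ref{lem:PPMQ}, the rearranged sum \eqref{eqn:hypPoincMaass3} dominates \eqref{eqn:hypPoincMaass} term-by-term in absolute value, compact convergence of \eqref{eqn:hypPoincMaass3} yields compact convergence of $\PPMeta$ as well.
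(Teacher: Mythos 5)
Your overall strategy coincides with the paper's: bound the incomplete beta factor by a power of $Dy^2/|Q(\tau,1)|^2$, reduce to (essentially) the series $\sum_{Q}|Q(\tau,1)|^{-k}$, organize the forms by $a$ and by the translation parameter, and finish with \eqref{eqn:Zagier}. Two steps, however, are wrong as written and need repair. First, your justification of the key estimate $\psi\bigl(Dy^2/|Q(\tau,1)|^2\bigr)\ll_{k,K}D^{k-\frac{1}{2}}|Q(\tau,1)|^{-(2k-1)}$ does not work: $\bigl|a|\tau|^2+bx+c\bigr|$ is \emph{not} comparable to $|Q(\tau,1)|$ --- the former vanishes on $S_Q$ while the latter is at least $\sqrt{D}y$ there --- so the chain starting from $\arctan|z|\le|z|$ only produces $\bigl|a|\tau|^2+bx+c\bigr|$ in the denominator, and replacing it by the larger quantity $|Q(\tau,1)|$ reverses the inequality. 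The estimate you want is nonetheless true, but it must be proved directly from \eqref{eqn:psidef}: for $v\in[0,1]$ one has $\beta\bigl(v;k-\tfrac12,\tfrac12\bigr)\le\sqrt{2}\,(k-\tfrac12)^{-1}v^{k-\frac12}$ when $v\le\tfrac12$, and $\beta\bigl(v;k-\tfrac12,\tfrac12\bigr)\le\beta\bigl(k-\tfrac12,\tfrac12\bigr)\le 2^{k-\frac12}\beta\bigl(k-\tfrac12,\tfrac12\bigr)v^{k-\frac12}$ when $v>\tfrac12$, so that $\psi(v)\ll_k v^{k-\frac12}$ uniformly on $[0,1]$. (The paper sidesteps this by keeping $a|\tau|^2+bx+c$ in the denominator and invoking $\arctan|z|\le|z|$ only in regimes where that quantity is provably $\gg an^2$ or $\gg ay^2$, using the trivial bound $\varphi(\pi/2)$ for the finitely many remaining $a$.)

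Second, your identification of the resulting Dirichlet series with the left-hand side of \eqref{eqn:Zagier} at $s=k-1$ is an arithmetic slip that manufactures a phantom boundary case at $k=2$: the quantity $\sum_{a\in\N}a^{-k}\,\#\{0\le b<2a:\ b^2\equiv D\pmod{4a}\}$ \emph{is} the left-hand side of \eqref{eqn:Zagier} at $s=k$, which lies strictly inside $\re(s)>1$ for every $k>1$. No $\epsilon$-shifting is needed, and the ``extra power of $a$'' extracted in your last paragraph (whose count $2k-1-(k-1)=k$ merely reproduces the bound $|Q(\tau,1)|^{-k}$ you already had) is resolving a problem that is not there. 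The step that actually requires care --- and which your sketch asserts but does not verify --- is that for fixed $a>\sqrt{D}/y$ and fixed residue class $b\pmod{2a}$ the sum over the translation parameter $n$ is $\ll_{\cpt,D}a^{-k}$; this uses $a|\tau+n|^2+b(x+n)+c\ge\tfrac34 ay^2$ for \emph{all} $n$ together with the bound $\gg an^2$ for $|n|$ large, exactly the case split the paper carries out, while the finitely many $a\le\sqrt{D}/y$ are disposed of trivially. With these two corrections your argument becomes the paper's proof.
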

\begin{proof}
Assume that $\tau=x+iy$ is contained in a compact subset $\cpt\subset\H$.  We note that although we unjustifiably reorder the summation multiple times before showing convergence, in the end we show that the resulting sum converges absolutely, hence validating the legality of this reordering.  

Taking the absolute value of each term in \eqref{eqn:hypPoincMaass3} and extending the sum to all $Q\subset\QD$, we obtain (noting \eqref{eqn:varphibeta})
$$
\frac{D^{\frac{1}{2}-k}}{\binom{2k-2}{k-1}\pi}\sum_{Q=[a,b,c]\in \QD}\left|Q\left(\tau,1\right)^{k-1} \varphi\left(\arctan\left|\frac{\sqrt{D} y}{a\left|\tau\right|^2 + bx +c}\right|\right)\right|.
$$
We may assume that $a>0$, since the case $a<0$ is treated by changing $Q\to -Q$.  We next rewrite $b$ as $b+2an$ with $0\leq b<2a$ and $n\in \Z$ and then split the sum into those summands with $|n|$ ``large'' and those with $|n|$ ``small.'' 

We first consider the case of large $n$, i.e., $\left|n\right|> 8\left(\left|\tau\right|+\sqrt{D}\right)$ and denote the corresponding sum by $\PPb$.  One easily sees that 
\begin{equation}\label{eqn:Qbndbig}
\left|Q\left(\tau,1\right)\right|\ll an^2,
\end{equation}
where here and throughout the implied constant depends only on $k$ unless otherwise noted.  
By estimating $\left|x\right|<\left|\tau\right|<\frac{\left|n\right|}{8}$ and $b<2a$, one obtains (noting that $\left|n\right|>8$)
$$
\left|a\left|\tau\right|^2 +\left(b+2an\right)x +c\right| \geq \left|c\right|-  \left|\left(b+2an\right)x\right|-a\left|\tau\right|^2 
\geq \left|c\right|-\frac{a}{4}\left(\left|n\right|+1\right)\left|n\right|-\frac{an^2}{64}\geq \left|c\right|-\frac{19}{64}an^2.
$$
However, $c=\frac{\left(b+2an\right)^2-D}{4a}$, so that the bounds $\left|n\right|>8$ and $D<\frac{n^2}{64}$ yield
$$
\left|c\right|\geq a\left(\left|n\right|-1\right)^2-\frac{n^2}{256a}\geq \frac{3}{4}an^2.
$$
Therefore 
\begin{equation}
\left|a\left|\tau\right|^2 +\left(b+2an\right)x +c\right| \gg an^2,
\end{equation}
 and hence by \eqref{eqn:arctanbnd} one concludes
$$
\arctan\left|\frac{\sqrt{D}y}{a\left|\tau\right|^2+\left(b+2an\right)x + c }\right|\leq  \left|\frac{\sqrt{D}y}{a\left|\tau\right|^2+\left(b+2an\right)x + c }\right|\ll \frac{\sqrt{D}y}{a n^2},
$$
Using \eqref{eqn:varphidef} and \eqref{eqn:varphibeta}, one obtains the estimate
$$
\int_0^{\arctan\left|\frac{\sqrt{D}y}{a\left|\tau\right|^2+bx + c }\right|} \sin(u)^{2k-2}du \ll \int_{0}^{\frac{\sqrt{D}y}{an^2}} \left|\sin(u)\right|^{2k-2}du.
$$
Since $\left|\sin(u)\right|\leq u$ for $u\geq 0$, we conclude that
\begin{equation}\label{eqn:intbndbig}
\int_{0}^{\frac{\sqrt{D}y}{an^2}} \left|\sin(u)\right|^{2k-2}du \leq \int_{0}^{\frac{\sqrt{D}y}{an^2}} u^{2k-2} du = \frac{1}{2k-1} \left(\frac{\sqrt{D}y}{an^2}\right)^{2k-1}.
\end{equation}
Combining \eqref{eqn:Qbndbig} and \eqref{eqn:intbndbig} and noting that all bounds are independent of $b$ yields
\begin{equation}\label{eqn:PPbbnd}
\PPb\left(\tau\right)\ll y^{2k-1}D^{k-\frac{1}{2}}\sum_{a\in \N}\sum_{\substack{0\leq b<2a\\ b^2\equiv D\pmod{4a}}}a^{-k} \sum_{n> 8\left(\left|\tau\right|+\sqrt{D}\right)} n^{-2k}\ll  \left(\frac{y\sqrt{D}}{\left|\tau\right|+\sqrt{D}}\right)^{2k-1}\ll_{\cpt, D} 1,
\end{equation}
where we have estimated the inner sum against the corresponding integral and evaluated the outer two sums with \eqref{eqn:Zagier}.  Since $y$ (resp. $\left|\tau\right|$) may be bounded from above (resp. below) by a constant depending only on $\cpt$, it follows that $\PPb$ converges uniformly on $\cpt$.  

We now move on to the case when $|n|\leq 8 \left(\left|\tau\right|+\sqrt{D}\right)$ and denote the corresponding sum by $\PPa$.  As in the case for $n$ large, one easily estimates
\begin{equation}\label{eqn:Qbndsmall}
\left|Q\left(\tau,1\right)\right|\ll a\left(\left|\tau\right|+\sqrt{D}\right)^{2}\ll_{\cpt,D} a.
\end{equation}
We further split the sum over $a\in \N$.  For $a>\frac{\sqrt{D}}{y}$ we have 
\begin{equation}
\left|a\left|\tau\right|^2 + \left(b+2an\right)x +c\right| = \left|ay^2 + a\left(x+n+\frac{b}{2a}\right)^2 -\frac{D}{4a}\right|\gg a y^2.
\end{equation}
Hence for the terms $a>\frac{\sqrt{D}}{y}$, we use \eqref{eqn:arctanbnd} to obtain
\begin{equation}\label{eqn:varphibnda}
\int_0^{\arctan\left|\frac{\sqrt{D}y}{a\left|\tau\right|^2+bx + c }\right|} \sin(u)^{2k-2}du \ll \int_{0}^{\frac{\sqrt{D}}{ay}} u^{2k-2}du =\frac{1}{2k-1} \left(\frac{\sqrt{D}}{ay}\right)^{2k-1}.
\end{equation}
For $a\leq \frac{\sqrt{D}}{y}$ we simply note that by \eqref{eqn:arctanbnd} we may trivially bound $\arctan\left|\frac{\sqrt{D}y}{a\left|\tau\right|^2+bx + c }\right|\leq \frac{\pi}{2}$ and, since $\sin(u)\geq 0$ for $0\leq u\leq \pi$, we may trivially estimate the remaining terms by the constant
\begin{equation}\label{eqn:varphibndasmall}
\int_0^{\frac{\pi}{2}} \sin(u)^{2k-2}du.
\end{equation}
Bounding the sum over $n$ trivially and using \eqref{eqn:Qbndsmall}, \eqref{eqn:varphibnda}, and \eqref{eqn:varphibndasmall} yields
\begin{multline}\label{eqn:PPabnd}
\PPa\left(\tau\right)\ll\left(|\tau|+\sqrt{D}\right)^{2k-1}\sum_{a\leq\frac{\sqrt{D}}{y}} \sum_{\substack{0\leq b<2a\\ b^2\equiv D\pmod{4a}}}a^{k-1}\\
+ D^{k-\frac{1}{2}}\left(\frac{|\tau|+\sqrt{D}}{y}\right)^{2k-1}\sum_{a>\frac{\sqrt{D}}{y}}\sum_{\substack{0\leq b<2a\\ b^2\equiv D\pmod{4a}}}a^{-k}\ll \left(|\tau|+\sqrt{D}\right)^{2k-1} \frac{D^{\frac{k+1}{2}}}{y^{k+1}}.
\end{multline}
Here we have employed \eqref{eqn:Zagier} for large $a$ and used trivial estimates for all other sums, completing the proof.
\end{proof}

\section{Values at exceptional points}\label{sec:exceptional}

In this section, we describe the behavior of $\PPM$ along the circles of discontinuity $\ED$ (defined in \eqref{eqn:EDdef}).  For each $Q$, $S_Q$ (defined in \eqref{eqn:SQdef}) partitions $\H\setminus S_Q$ into two open connected components (one ``above'' and one ``below'' $S_Q$), which, for $\varepsilon =\pm$, we denote by
\begin{equation}\label{eqn:HPdef}
\CC_Q^{\varepsilon}:=\left\{\tau\in \H:  \varepsilon \sgn\left(\left|\tau +\frac{b}{2a}\right|-\frac{\sqrt{D}}{2|a|}\right) =1 \right\}.
\end{equation}
For each $\tau\in \H$, we further define
\begin{equation}\label{eqn:bddef}
\bd_{\tau}=\bd_{\tau,D}:= \left\{Q\in\QD: \tau\in S_Q^{\phantom{-} }\right\}.
\end{equation}
In order for the second condition in the  definition of locally harmonic Maass forms to be meaningful, it is first necessary to show that the set $\ED$ is nowhere dense in $\H$ and hence $\ED$ partitions $\H\setminus\ED$ into (open) connected components.  
\begin{lemma}\label{lem:neighbor}
Suppose that $D>0$ is a non-square discriminant.  For every $\tau_0=x_0+iy_0\in \H$, the following hold:
\noindent

\noindent
\begin{enumerate}
\item
For all but finitely many $Q\in \QD$, we have that $\tau_0\in \CC_{Q}^+$.  In particular, $\bd_{\tau_0}$ is finite.
\item 
There exists a neighborhood $N$ of $\tau_0$ so that for every $[a,b,c]\notin \bd_{\tau_0}$ and $\tau=x+iy\in N$,
$$
\sgn\left(a\left|\tau\right|^2+bx+ c\right) = \sgn\left(a\left|\tau_0\right|^2+bx_0+ c\right)\neq 0.
$$
\end{enumerate}
\end{lemma}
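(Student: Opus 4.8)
The plan is to prove (1) first by a counting/reduction-theory argument, and then deduce (2) by a continuity argument using the finiteness established in (1). For part (1), fix $\tau_0 = x_0 + iy_0 \in \H$. The key observation is that $\tau_0 \in \CC_Q^+$ precisely means $a|\tau_0|^2 + bx_0 + c > 0$, i.e. $\tau_0$ lies strictly outside the semicircle $S_Q$ (on the ``$+$'' side), and it fails to lie there only when $a|\tau_0|^2 + bx_0 + c \le 0$. So I need to show that only finitely many $Q = [a,b,c] \in \QD$ satisfy $a|\tau_0|^2 + bx_0 + c \le 0$. Using $b^2 - 4ac = D$ I can solve $c = (b^2-D)/(4a)$ and rewrite $4a(a|\tau_0|^2 + bx_0 + c) = (2a x_0 + b)^2 + 4a^2 y_0^2 - D$. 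Thus (assuming $a>0$; the case $a<0$ is symmetric after $Q \mapsto -Q$, which also swaps $\CC_Q^+$ and $\CC_Q^-$, so I should be a little careful and instead just bound $|\bd_{\tau_0}|$ and the number of $Q$ with $\tau_0$ on the ``wrong'' side together) the condition $a|\tau_0|^2 + bx_0 + c \le 0$ becomes $(2ax_0 + b)^2 + 4a^2 y_0^2 \le D$. This forces $4a^2 y_0^2 \le D$, so $|a| \le \sqrt{D}/(2y_0)$ — finitely many choices of $a$ — and then $(2ax_0+b)^2 \le D$ pins $b$ to a bounded interval, finitely many choices of $b$; and $c$ is then determined by $a$, $b$, and $D$. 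Hence only finitely many $Q$, proving the first assertion. Since $\bd_{\tau_0} = \{Q : a|\tau_0|^2 + bx_0 + c = 0\}$ is contained in this finite set, $\bd_{\tau_0}$ is finite as well.

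For part (2), let $F(\tau; Q) := a|\tau|^2 + bx + c$, a continuous (indeed real-analytic) function of $\tau$ for each fixed $Q$. For $Q \notin \bd_{\tau_0}$ we have $F(\tau_0; Q) \ne 0$, so by continuity there is an open neighborhood $N_Q$ of $\tau_0$ on which $F(\cdot; Q)$ has the same (nonzero) sign as $F(\tau_0; Q)$. The issue is that there are infinitely many such $Q$ and I need a single neighborhood $N$ that works simultaneously for all of them. Here I split $\QD \setminus \bd_{\tau_0}$ into the finite set $\mathcal{S}$ of $Q$ with $F(\tau_0;Q) \le 0$ together with those having $\tau_0$ on $S_Q$ (the latter is empty by definition of $\mathcal S$, so really $\mathcal S$ consists of the finitely many $Q$ with $F(\tau_0;Q)<0$), and its complement, namely all $Q$ with $F(\tau_0; Q) > 0$. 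For the finitely many $Q \in \mathcal{S}$, take $N := \bigcap_{Q \in \mathcal{S}} N_Q$, still an open neighborhood of $\tau_0$. For the remaining (infinitely many) $Q$, where $F(\tau_0;Q) > 0$, i.e. $\tau_0 \in \CC_Q^+$, I claim $N$ can be shrunk to a small closed disk $\overline{B}(\tau_0, \rho) \subset \H$ so that $\tau \in \CC_Q^+$ for all such $Q$ and all $\tau \in \overline{B}(\tau_0,\rho)$ at once. Indeed, $F(\tau;Q) = (2ax+b)^2/(4a) + ay^2 - D/(4a)$ for $a>0$ (and symmetrically for $a<0$); a direct estimate shows that once $|a|$ is large enough (larger than some bound depending only on $\tau_0$ and $D$), $F(\tau;Q) > 0$ holds on a fixed disk around $\tau_0$ uniformly, because the $ay^2$ term dominates; and the finitely many $Q$ with $F(\tau_0;Q)>0$ but $|a|$ small are each handled by their own $N_Q$, of which there are finitely many, so they can be folded into the intersection defining $N$.

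The main obstacle is exactly this uniformity in part (2): ensuring one neighborhood works for infinitely many $Q$ simultaneously. The cleanest way to organize it is to prove a quantitative version of the estimate in part (1) — namely, that there is a constant $M = M(\tau_0, D)$ such that for every $Q = [a,b,c] \in \QD$ with $|a| > M$ and every $\tau$ in a fixed small disk about $\tau_0$, one has $\mathrm{sgn}(F(\tau;Q)) = +1 = \mathrm{sgn}(F(\tau_0;Q))$. This follows from the identity $4|a| F(\tau;Q) = (2ax+b)^2 + 4a^2y^2 - D$ (for $a>0$) by bounding $4a^2 y^2$ below and $(2ax+b)^2$, $D$ crudely, using that $y$ is bounded below and $|x|, y$ bounded above on the disk. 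The finitely many $Q$ with $|a| \le M$ are then treated individually by continuity, and the overall $N$ is the intersection of the disk with these finitely many $N_Q$'s. I would present part (1) with the explicit identity and the $|a| \le \sqrt D/(2y_0)$ bound, then present part (2) by first isolating the large-$|a|$ regime with the quantitative estimate and then invoking finiteness for the rest.
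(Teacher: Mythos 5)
Your overall strategy --- show that a fixed neighborhood of $\tau_0$ lies in $\CC_Q^+$ for all but finitely many $Q$, then intersect the finitely many remaining open sets --- is the same as the paper's, and your part (1) is essentially correct: the condition $(2ax_0+b)^2+4a^2y_0^2\le D$ that you isolate is exactly the condition $\tau_0\notin\CC_Q^+$ (for either sign of $a$), and it forces $|a|\le\sqrt{D}/(2y_0)$ and $|2ax_0+b|\le\sqrt{D}$, hence finitely many $Q$.

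Part (2), however, has a genuine gap with two facets. First, a sign error propagates: by the definition \eqref{eqn:HPdef} together with \eqref{eqn:sgnrewrite}, $\tau\in\CC_Q^+$ means $\sgn(a)\bigl(a|\tau|^2+bx+c\bigr)>0$, i.e.\ $\tau$ lies outside $S_Q$; this is \emph{not} equivalent to $a|\tau|^2+bx+c>0$ when $a<0$ (and $Q\mapsto -Q$ does not swap $\CC_Q^+$ and $\CC_Q^-$; it fixes them, since the defining inequality depends only on $b/(2a)$ and $\sqrt{D}/(2|a|)$). Consequently your set $\mathcal{S}=\{Q: F(\tau_0;Q)<0\}$ is infinite --- it contains every $Q$ with $a<0$ and $|a|$ large --- so $\bigcap_{Q\in\mathcal{S}}N_Q$ is an infinite intersection and need not be a neighborhood of $\tau_0$; the finite set you should intersect over is $\{Q:\tau_0\notin\CC_Q^+\}$ from part (1). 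Second, and independently of the sign issue: your uniform estimate covers only the regime $|a|>M$, and you then assert that only ``finitely many'' $Q$ with $|a|\le M$ remain to be handled individually. That is false: for each fixed $a\neq 0$ there are infinitely many $b$ with $b^2\equiv D\pmod{4a}$, so $\{Q\in\QD : |a|\le M\}$ is infinite. You need a second uniform estimate, in $b$: for $|a|\le M$ and $|b|>2M(|x_0|+\rho)+\sqrt{D}$ one has $|2ax+b|\ge |b|-2M(|x_0|+\rho)>\sqrt{D}$ on the disk of radius $\rho$, whence $(2ax+b)^2+4a^2y^2>D$ and the entire disk lies in $\CC_Q^+$. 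This is precisely the second inequality in the paper's proof (the bound $|b|>2|a|\max\{|x_0-1|,|x_0+1|\}+\sqrt{D}$). With these two repairs your argument coincides with the paper's.
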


\begin{proof}
(1) We define the open set
$$
 N_1:=\left\{ \tau=x+iy\in \H: \left|x-x_0\right|<1, y> \frac{y_0}{2}\right\}.
$$
If $\left|a\right|>\frac{\sqrt{D}}{y_0}$ and $\tau\in N_1$, then the inequality
$$
\left|\tau + \frac{b}{2a}\right|\geq y > \frac{y_0}{2}>\frac{\sqrt{D}}{2|a|}
$$
implies that $\tau\in \CC_Q^+$.  Moreover, for 
$$
\left|b\right|>2|a|\max\Big\{ \left|x_0-1\right|,\left|x_0+1\right|\Big\}  +\sqrt{D},
$$
we have 
$$
\left|\tau+\frac{b}{2a}\right| > \left| \frac{2a x +b}{2a}\right|\geq  \frac{|b|-2|a| |x|}{2|a|}> \frac{ 2|a|\left(\max\Big\{\left|x_0-1\right|,\left|x_0+1\right|\Big\} - |x|\right)+\sqrt{D}}{2|a|}.
$$
One immediately concludes that
\begin{equation}\label{eqn:CQ+}
N_1\subseteq \CC_Q^+
\end{equation}
for all but finitely many $Q\in \QD$.  In particular, this proves the first statement.

\noindent
(2) In order to prove the second statement, for $a,b,c\in \Z$, we define
$$
N_{a,b,c}:=\left\{ \tau=x+iy\in N_1: \sgn\left(a\left|\tau\right|^2+bx + c\right) = \sgn \left(a\left|\tau_0\right|^2+b x_0+ c\right)\right\}.
$$
We denote the intersection of these open sets by
$$
N=N_Q:=\bigcap_{\left[a,b,c\right]\in \QD\setminus \bd_{\tau_0}} N_{a,b,c},
$$
which we now prove is a neighborhood of $\tau_0$ satisfying the second statement of the lemma.  A short calculation shows that 
\begin{equation}\label{eqn:sgnrewrite}
\sgn\left(a\left|\tau\right|^2+bx + c\right)=\sgn\left(a\right)\sgn\left(\left|\tau + \frac{b}{2a}\right|- \frac{\sqrt{D}}{2|a|}\right),
\end{equation}
so that  $N_{a,b,c}=N_1\cap C_Q^{\varepsilon}$ with $\varepsilon$ chosen such that $\tau_0\in \CC_Q^{\varepsilon}$.  Hence by \eqref{eqn:CQ+}, we conclude that $N_{a,b,c}=N_1$ for all but finitely many $\left[a,b,c\right]\in \QD$.  Therefore $N$ is the intersection of finitely many $N_{a,b,c}$.  Hence $N$ is open and every $\tau\in N$ satisfies the conditions of the second statement, completing the proof.
\end{proof}

We are now ready to describe the value $\PPM\left(\tau\right)$ whenever $\tau\in S_Q$ for some $Q\in \QD$.  
\begin{proposition}\label{prop:boundary}
If $\tau\in \ED$, then 
$$
\PPM\left(\tau\right) = \frac{1}{2}\lim_{w\to 0^+}\left(\PPM\left(\tau+iw\right) + \PPM\left(\tau-iw\right)\right).
$$
\end{proposition}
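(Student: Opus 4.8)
The plan is to isolate the finitely many terms of the defining sum \eqref{eqn:hypPoincMaass3} that are responsible for the discontinuity and verify the averaging identity for each of them by hand, while showing that the rest of the sum is continuous at $\tau$ and hence automatically satisfies the identity.

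Fix $\tau=x+iy\in\ED$. By Lemma \ref{lem:neighbor}(1) the set $\bd_{\tau}$ is finite, and by Lemma \ref{lem:neighbor}(2) there is a neighborhood $N$ of $\tau$ on which, for every $[a,b,c]\in\QD\setminus\bd_{\tau}$, the factor $\sgn(a|\cdot|^2+b\cdot+c)$ is constant and nonzero. Since $|Q(\tau',1)|^2=D\,\im(\tau')^2+(a|\tau'|^2+b\re(\tau')+c)^2\ge D\,\im(\tau')^2>0$ on $\H$, the factor $Q(\cdot,1)^{k-1}\psi\bigl(D\,\im(\cdot)^2/|Q(\cdot,1)|^2\bigr)$ is real analytic on $N$; hence every term of \eqref{eqn:hypPoincMaass3} indexed by $Q\in\narrow\setminus\bd_{\tau}$ is continuous at $\tau$. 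Choose a compact neighborhood $\cpt\subseteq N$ of $\tau$. By Proposition \ref{prop:converge} the series \eqref{eqn:hypPoincMaass3} converges uniformly on $\cpt$, so we may write $\PPM$ on $\cpt$ as the (possibly empty, in particular when $\tau$ lies on no $S_Q$ with $Q\in\narrow$) finite partial sum over $Q\in\narrow\cap\bd_{\tau}$ plus the remainder series over $Q\in\narrow\setminus\bd_{\tau}$. The remainder is a uniform limit on $\cpt$ of functions continuous at $\tau$, hence continuous at $\tau$, so the average of its values at $\tau+iw$ and $\tau-iw$ tends to twice its value at $\tau$ as $w\to0^+$. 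It therefore suffices to prove the same identity for each of the finitely many terms indexed by $Q\in\narrow\cap\bd_{\tau}$.

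Fix such a $Q=[a,b,c]$, so that $a|\tau|^2+bx+c=0$ and $a\ne0$ (as $D$ is a non-square). The sign factor vanishes at $\tau$, so this term of \eqref{eqn:hypPoincMaass3} equals $0$ there. A direct computation gives
$$
a|\tau\pm iw|^2+bx+c=\bigl(a|\tau|^2+bx+c\bigr)+aw(\pm2y+w)=aw(\pm2y+w),
$$
which for all sufficiently small $w>0$ has sign $\pm\sgn(a)$; meanwhile $Q(\tau\pm iw,1)\to Q(\tau,1)\ne0$ and $D\,\im(\tau\pm iw)^2/|Q(\tau\pm iw,1)|^2\to Dy^2/|Q(\tau,1)|^2=1$ as $w\to0^+$, and $\psi$ is continuous at $v=1$ since the integral defining $\beta\bigl(v;k-\tfrac12,\tfrac12\bigr)$ converges at its upper endpoint (here $k>1$). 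Hence, up to the fixed nonzero constant in front of \eqref{eqn:hypPoincMaass3}, the two one-sided limits of this term are $\sgn(a)\,Q(\tau,1)^{k-1}\psi(1)$ and $-\sgn(a)\,Q(\tau,1)^{k-1}\psi(1)$, whose average is $0$, exactly the value of the term at $\tau$.

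Summing the identities just established over $Q\in\narrow\cap\bd_{\tau}$ and adding the continuity statement for the remainder series yields the proposition. The only point requiring care is the legitimacy of detaching finitely many discontinuous terms from the series, which is supplied by the uniform convergence of Proposition \ref{prop:converge} together with Lemma \ref{lem:neighbor}; beyond this bookkeeping and the elementary endpoint behavior of $\psi$ at $v=1$, no serious obstacle arises.
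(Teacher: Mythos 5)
Your proposal is correct and follows essentially the same route as the paper: split off the finitely many terms with $Q\in\bd_{\tau}$, use the uniform convergence of Proposition \ref{prop:converge} and Lemma \ref{lem:neighbor} to see the remaining series is continuous at $\tau$, and observe that for each exceptional $Q$ the sign factor takes opposite values on the two sides of $S_Q$ so those terms average to zero. Your direct computation $a|\tau\pm iw|^2+bx+c=aw(\pm 2y+w)$ is just a more explicit version of the paper's sign comparison via \eqref{eqn:sgnrewrite} and \eqref{eqn:circavg2}.
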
 
\begin{proof}
We first split the sum \eqref{eqn:hypPoincMaass3} defining $\PPM$ into $Q\in \bd_{\tau}$ and $Q\notin \bd_{\tau}$ (defined in \eqref{eqn:bddef}).  Due to local uniform convergence, we may interchange the limit $w\to 0^+$ with the sum.  Since $\beta\left(t;k-\frac{1}{2},\frac{1}{2}\right)$ is continuous as a function of $0<t\leq 1$, one obtains 
\begin{multline}\label{eqn:circavg}
\frac{1}{2}\lim_{w\to 0^+}\left(\PPM\left(\tau+iw\right) + \PPM\left(\tau-iw\right)\right)\\
 =\frac{\left(-1\right)^kD^{\frac{1}{2}-k}}{\binom{2k-2}{k-1}\pi} \sum_{Q=[a,b,c]\notin \bd_{\tau}} \sgn\left(a\left|\tau\right|^2 + b x+c\right) Q\left(\tau,1\right)^{k-1} \varphi\left(\arctan\left|\frac{\sqrt{D}y}{a\left|\tau\right|^2 + b x+c}\right|\right)\\
 + \frac{\left(-1\right)^kD^{\frac{1}{2}-k}}{2\pi \binom{2k-2}{k-1}}\sum_{\substack{Q=[a,b,c]\in \bd_{\tau}\\ \varepsilon\in \left\{\pm\right\}}} \lim_{w\to 0^+}\Bigg(
\sgn\left(a\left|\tau+\varepsilon iw\right|^2 + b x+c\right) Q\left(\tau+\varepsilon iw,1\right)^{k-1}\\
\times\varphi\left(\arctan\left|\frac{\sqrt{D}\left(y+\varepsilon w\right)}{a\left|\tau+\varepsilon iw\right|^2 + b x+c}\right|\right)\Bigg).
\end{multline}
For each $Q=[a,b,c]\in \bd_{\tau}$ and $0<w<y$, one concludes, since $\frac{b}{2a}$ is real, that 
\begin{equation}\label{eqn:circavg2}
\left|\tau-iw+\frac{b}{2a}\right|-\frac{\sqrt{D}}{2\left|a\right|}<\left|\tau+\frac{b}{2a}\right|-\frac{\sqrt{D}}{2\left|a\right|}=0 <\left|\tau+iw+\frac{b}{2a}\right|-\frac{\sqrt{D}}{2\left|a\right|}.
\end{equation}
It follows from \eqref{eqn:sgnrewrite} that the $\pm$ terms on the right hand side of \eqref{eqn:circavg} have opposite signs.  Since $\varphi$ is continuous, one concludes that the sum over $Q\in \bd_{\tau}$ vanishes, completing the proof.
\end{proof}

\section{Action of $\xi_{2-2k}$ and $\DD^{2k-1}$}\label{sec:xiDk-1}
In this section, we determine the action of the operators $\xi_{2-2k}$ and $\DD^{2k-1}$ on $\PPM$ (and $\PPkD$).  We prove the following proposition, which immediately implies Theorem \ref{thm:xiDk-1}.
\begin{proposition}\label{prop:xiDk-1}
Suppose that $k>1$, $D>0$ is a non-square discriminant, and $\narrow\subseteq\QD$ is a narrow class of binary quadratic forms.  Then for every $\tau\in \H\setminus\ED$, the function $\PPkD$ satisfies
\begin{eqnarray*}
 \xi_{2-2k}\left(\PPM\right)\left(\tau\right) &=& D^{\frac{1}{2}-k}\PP\left(\tau\right),\\
\DD^{2k-1}\left(\PPM\right)\left(\tau\right) &=&  -D^{\frac{1}{2}-k}\frac{\left(2k-2\right)!}{\left(4\pi\right)^{2k-1}}  \PP\left(\tau\right).
\end{eqnarray*}
In particular, we have that
\begin{equation}\label{eqn:Deltaxi}
\Delta_{2-2k}\left(\PPM\right)\left(\tau\right)=0.
\end{equation}
\end{proposition}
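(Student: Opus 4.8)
Looking at Proposition \ref{prop:xiDk-1}, I need to prove that $\xi_{2-2k}$ and $\mathcal{D}^{2k-1}$ applied to $\mathcal{F}_{1-k,D,\mathcal{A}}$ yield constant multiples of the hyperbolic Poincaré series $f_{k,D,\mathcal{A}}$.

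The plan is to establish the sharper Proposition \ref{prop:xiDk-1}: Theorem \ref{thm:xiDk-1} then follows by summing \eqref{eqn:hypPoincMaass3} over the narrow classes $\narrow\subseteq\QD$, and \eqref{eqn:Deltaxi} is immediate from \eqref{eqn:Deltaxigen}, since once $\xi_{2-2k}(\PPM)=D^{\frac12-k}\PP$ is known we get $\Delta_{2-2k}(\PPM)=-\xi_{2k}\xi_{2-2k}(\PPM)=-D^{\frac12-k}\xi_{2k}(\PP)=0$ because $\PP\in S_{2k}$ is holomorphic. Both $\xi_{2-2k}$ and $\DD^{2k-1}$ intertwine the weight $2-2k$ and weight $2k$ slash actions of $\SL_2(\R)$ (for $\DD^{2k-1}$ this is Bol's identity), so, granting that the compact convergence of Proposition \ref{prop:converge} (together with the analogous derivative estimates, or equivalently the classical compact convergence of $\PPeta=\sum_\gamma h_k|_{2k}A\gamma$ from \eqref{eqn:hypPoinc}) permits termwise differentiation of \eqref{eqn:hypPoincMaass}, it suffices to prove the two identities
$$
\xi_{2-2k}\big(\phistr\big)=h_k,\qquad \DD^{2k-1}\big(\phistr\big)=-\frac{(2k-2)!}{(4\pi)^{2k-1}}h_k\qquad(x\ne 0)
$$
for the seed function $\phistr$ of \eqref{eqn:varphi*def}; combining these with Lemma \ref{lem:PPMQ} and \eqref{eqn:PPetaPP} yields the stated formulas for $\PPM$.

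For the first identity I would compute directly. Write $\phistr(\tau)=\tau^{k-1}g(\tau)$ with $g(\tau):=\sgn(x)\varphi(\arctan|y/x|)$ real-valued; since $\varphi'(v)=\sin(v)^{2k-2}$ and $\sin(\arctan|y/x|)^{2k-2}=y^{2k-2}/|\tau|^{2k-2}$, the chain rule (checked for both signs of $x$) gives $\partial_{\bar\tau}g=iy^{2k-2}\tau/(2|\tau|^{2k})$, hence $\partial_{\bar\tau}\phistr=\tfrac{i}{2}y^{2k-2}\bar\tau^{-k}$ and $\xi_{2-2k}(\phistr)=2iy^{2-2k}\overline{\partial_{\bar\tau}\phistr}=\tau^{-k}=h_k$.

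For the second identity, the key observation is that away from the imaginary axis $\phistr$ agrees with $\tau^{k-1}\varphi(\arg\tau)$ up to an additive term of the form $(\mathrm{const})\cdot\tau^{k-1}$: for $x>0$ we have $\arctan(y/x)=\arg\tau$, while for $x<0$ the substitution $u\mapsto\pi-u$ in \eqref{eqn:varphidef} gives $\sgn(x)\varphi(\arctan|y/x|)=\varphi(\arg\tau)-\varphi(\pi)$. As $\partial_\tau^{2k-1}(\tau^{k-1})=0$, it remains to compute $\partial_\tau^{2k-1}\big(\tau^{k-1}\varphi(\arg\tau)\big)$. From $\partial_\tau(\arg\tau)=-i/(2\tau)$ one gets $\partial_\tau\big(\tau^{m}F(\arg\tau)\big)=\tau^{m-1}\big((m-\tfrac{i}{2}\partial_\theta)F\big)(\arg\tau)$ for smooth $F$, and iterating
$$
\partial_\tau^{2k-1}\big(\tau^{k-1}F(\arg\tau)\big)=\tau^{-k}\Bigg(\prod_{j=0}^{2k-2}\Big(k-1-j-\tfrac{i}{2}\,\partial_\theta\Big)F\Bigg)(\arg\tau).
$$
Taking $F=\varphi$, so $\partial_\theta F=\sin^{2k-2}\theta$, the operator factors as $\prod_{j}(k-1-j-\tfrac{i}{2}\partial_\theta)=(-\tfrac{i}{2})^{2k-1}\partial_\theta\prod_{l=1}^{k-1}(\partial_\theta^2+4l^2)$; since $\partial_\theta^2+4l^2$ annihilates $\cos(2l\theta)$ and $\sin(2l\theta)$, this kills every frequency $\pm2m$ ($1\le m\le k-1$) in the Fourier expansion of $\sin^{2k-2}\theta$ and multiplies its constant coefficient $2^{-(2k-2)}\binom{2k-2}{k-1}$ by $\prod_{l=1}^{k-1}(2l)^2=2^{2k-2}((k-1)!)^2$. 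Hence the bracket equals the constant $(-\tfrac{i}{2})^{2k-1}(2k-2)!$, and $\DD^{2k-1}(\phistr)=(2\pi i)^{-(2k-1)}(-\tfrac{i}{2})^{2k-1}(2k-2)!\,\tau^{-k}=-\frac{(2k-2)!}{(4\pi)^{2k-1}}h_k$, as claimed.

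I expect the genuine difficulties to be (i) justifying the termwise application of $\xi_{2-2k}$, and especially of the higher-order operator $\DD^{2k-1}$, to the series \eqref{eqn:hypPoincMaass}, whose terms are discontinuous across $\ED$ — here one reruns the estimates of Proposition \ref{prop:converge} for the relevant derivatives — and (ii) keeping the constants straight in the $\DD^{2k-1}$ computation. For (ii) a reassuring cross-check is to expand $\sin(\arg\tau)^{2k-2}$ into exponentials, which writes $\tau^{k-1}\varphi(\arg\tau)$ as a constant multiple of $\tau^{k-1}\log(\tau/\bar\tau)$ plus a linear combination of $\tau^{j}\bar\tau^{k-1-j}$ ($0\le j\le 2k-2$) and $\tau^{k-1}$; every summand except $\tau^{k-1}\log\tau$ is annihilated by $\partial_\tau^{2k-1}$, while $\partial_\tau^{2k-1}(\tau^{k-1}\log\tau)=(-1)^{k-1}((k-1)!)^2\tau^{-k}$ and $(-1)^{k-1}((k-1)!)^2\binom{2k-2}{k-1}=(2k-2)!$ reproduce the same constant.
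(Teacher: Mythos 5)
Your proposal is correct, and its overall architecture --- termwise differentiation of the hyperbolic Poincar\'e series \eqref{eqn:hypPoincMaass}, equivariance of $\xi_{2-2k}$ and (via Bol's identity) of $\DD^{2k-1}$ under the slash action, and reduction to the seed function $\phistr$ --- is exactly the paper's; your $\xi_{2-2k}$ computation coincides with the paper's \eqref{eqn:xitau^k}. Where you genuinely diverge is the evaluation of $\DD^{2k-1}\left(\phistr\right)$. The paper proves by induction the formula \eqref{eqn:PPMDn} involving recursively defined homogeneous polynomials $P_n$, notes that at $n=2k-1$ the $\varphi$-term vanishes, and then pins down $P_{2k-1}$ by combining harmonicity (so $P_{2k-1}=\overline{\tau}^{k-1}P(\tau)$ for a polynomial $P$), homogeneity (so $P_{2k-1}=C\left|\tau\right|^{2k-2}$), and extraction of the leading coefficient in $x$. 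You instead replace $\phistr$ by $\tau^{k-1}\varphi\left(\arg\tau\right)$ modulo a multiple of $\tau^{k-1}$ (which $\partial_\tau^{2k-1}$ kills), conjugate $\partial_\tau^{2k-1}$ into the ordinary differential operator $\prod_{m=-(k-1)}^{k-1}\left(m-\tfrac{i}{2}\partial_\theta\right)$ in $\theta=\arg\tau$, and factor it as $\left(-\tfrac{i}{2}\right)^{2k-1}\partial_\theta\prod_{l=1}^{k-1}\left(\partial_\theta^2+4l^2\right)$, which annihilates every nonzero frequency of $\sin^{2k-2}\theta$ and scales its constant term by $\prod_{l=1}^{k-1}(2l)^2$; this makes the constant $-\frac{(2k-2)!}{(4\pi)^{2k-1}}$ transparent and avoids the appeal to harmonicity, while the paper's route has the mild benefit of yielding the intermediate formula \eqref{eqn:PPMDn} for all $n$. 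Both arguments treat the termwise differentiation (your point (i)) at the same level of rigor: the paper also differentiates term by term after invoking Lemma \ref{lem:neighbor} and Proposition \ref{prop:converge}. One cosmetic slip in your closing cross-check: the identity $(-1)^{k-1}((k-1)!)^2\binom{2k-2}{k-1}=(2k-2)!$ should have $(-1)^{k-1}(2k-2)!$ on the right; that sign is absorbed by the prefactor $\left(-\tfrac{i}{2}\right)^{2k-1}=(-1)^{k-1}/(2^{2k-1}i)$ multiplying $\tau^{k-1}\log\left(\tau/\overline{\tau}\right)$, so the check still closes and the main argument is unaffected.
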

\begin{remark}
As mentioned in the introduction, the case $k=1$ is addressed in H\"ovel's thesis.  His method is based on theta lifts and differs greatly from the argument given here.
\end{remark}
\begin{proof}
Assume that $\tau \in \H\setminus\ED$.  By Lemma \ref{lem:neighbor}, there is a neighborhood containing $\tau$ for which \eqref{eqn:hypPoincMaass3} is continuous and real differentiable.  Inside this neighborhood, we use Lemma \ref{lem:PPMQ} to rewrite $\PPM$ in terms of $\PPMeta$ for some hyperbolic pair $\eta, \eta'$ and then act by $\xi_{2-2k}$ and $\DD^{2k-1}$ termwise on the expansion \eqref{eqn:hypPoincMaass}.  However, the operator $\xi_{2-2k}$ (resp. $\DD^{2k-1}$) commutes with the group action of $\SL_2\left(\R\right)$, so it suffices to compute the action of $\xi_{2-2k}$ (resp. $\DD^{2k-1}$) on $\phistr$ (defined in \eqref{eqn:varphi*def}).  By Lemma \ref{lem:AGamAQ} and \eqref{eqn:AQre}, the assumption that $\tau\in\H\setminus\ED$ is equivalent to the restriction that $x\neq 0$ before slashing by $A\gamma$.

For $x\neq 0$, we use
\begin{equation}\label{eqn:sinarctan}
\sin\left(\arctan\left|\frac{y}{x}\right|\right) = \frac{|y|}{\sqrt{x^2+y^2}}
\end{equation}
to evaluate
\begin{equation}\label{eqn:xitau^k}
\xi_{2-2k}\left(\phistr\right)\left(\tau\right)=i y^{2-2k} \sgn(x)\overline{\tau}^{k-1} \sin\left(\arctan\left|\frac{y}{x}\right|\right)^{2k-2} \left(-\frac{y\sgn(x)}{x^2+y^2} -i\frac{x\sgn(x)}{x^2+y^2}\right)=\tau^{-k}.
\end{equation}
Using Lemma \ref{lem:PPMQ} and \eqref{eqn:PPetaPP}, on $\H\setminus E_D$ it follows that
$$
\xi_{2-2k}\left(\PPM\right)=\frac{D^{-\frac{k}{2}}}{\binom{2k-2}{k-1}\pi}\xi_{2-2k}\left(\PPMeta\right)=\frac{D^{-\frac{k}{2}}}{\binom{2k-2}{k-1}\pi} \PPeta=D^{\frac{1}{2}-k}\PP.
$$
Since $\xi_{2-2k}\left(\PPM\right)$ is holomorphic in some neighborhood of $\tau$, one immediately obtains \eqref{eqn:Deltaxi} after using \eqref{eqn:Deltaxigen} to rewrite $\Delta_{2-2k}$.

We next consider $\DD^{2k-1}$.  We first show that for $n\geq 0$ and $x\neq 0$ we have 
\begin{equation}\label{eqn:PPMDn}
\left(2\pi i\right)^n \DD^{n}\left(\phistr\right)\left(\tau\right)  =\frac{\Gamma\left(k\right)}{\Gamma\left(k-n\right)} \sgn(x)\tau^{k-1-n} \varphi\left(\arctan\left|\frac{y}{x}\right|\right) + \frac{P_n\left(x,y\right)}{\tau^n \overline{\tau}^{k-1}},
\end{equation}
where $P_n\left(x,y\right)$ is the homogeneous polynomial of degree $2k-2$
defined inductively by $P_0(x,y):=0$ and 
\begin{equation}\label{eqn:Pndef}
P_{n+1}\left(x,y\right):=\frac{-i}{2}\frac{\Gamma\left(k\right)}{\Gamma\left(k-n\right)} y^{2k-2}  + \tau \frac{d}{d\tau}\left(P_n\left(x,y\right)\right) - n P_n\left(x,y\right)
\end{equation}
for $n\geq 0$.  The statement for $n=0$ is simply definition \eqref{eqn:varphi*def} of $\phistr$.   We then use induction and apply \eqref{eqn:sinarctan} to establish \eqref{eqn:PPMDn} for $n\geq 0$.

In particular, for $n=2k-1$ the first term in \eqref{eqn:PPMDn} vanishes and thus we have 
$$
\DD^{2k-1}\left(\phistr\right)\left(\tau\right)=\frac{P_{2k-1}\left(x,y\right)}{\left(2\pi i \right)^{2k-1}\tau^{2k-1}\overline{\tau}^{k-1}}.
$$
However, in some neighborhood of $\tau$, \eqref{eqn:Deltaxi} implies that $\phistr$ is harmonic and hence $\DD^{2k-1}\left(\phistr\right)$ is holomorphic.  Thus
$$
P_{2k-1}\left(x,y\right)=\overline{\tau}^{k-1}P\left(\tau\right)
$$
for some polynomial $P\in \C[X]$.  However, since $P_{2k-1}\left(x,y\right)$ is homogeneous of degree $2k-2$, it follows that 
$$
P_{2k-1}\left(x,y\right) =C\left|\tau\right|^{2k-2} =Cx^{2k-2}+O_y\left(x^{2k-3}\right)
$$
for some constant $C\in \C$.  In order to compute the constant, we note that, by \eqref{eqn:Pndef}, one easily inductively shows that for $n\geq 1$ 
$$
P_{n+1}\left(x,y\right) = \frac{-i}{2}x^{n}\frac{d^{n}}{d\tau^{n}}\left(y^{2k-2}\right) + O_y\left(x^{n-1}\right).
$$
We use this with $n=2k-2$ to obtain that
$$
C=-\left(\frac{i}{2}\right)^{2k-1}\left(2k-2\right)!.
$$
Hence it follows that 
$$
\DD^{2k-1}\left(\phistr\right)\left(\tau\right) = -\frac{\left(2k-2\right)!}{\left(4\pi\right)^{2k-1}}\tau^{-k}.
$$
Therefore, using Lemma \ref{lem:PPMQ} and \eqref{eqn:PPetaPP} to rewrite $\PPMeta$ and $\PPeta$, we complete the proof with
$$
\DD^{2k-1}\left(\PPM\right)\left(\tau\right) = -D^{\frac{1}{2}-k}\frac{\left(2k-2\right)!}{\left(4\pi\right)^{2k-1}} \PP\left(\tau\right).
$$
\end{proof}

\section{The expansion of $\PPM$}\label{sec:expansion}
In this section we investigate the ``shape'' of $\PPM$.  We are then able to prove that $\PPM$ is a locally harmonic Maass form, completing the proof of Theorem \ref{thm:converge}.  To describe the expansion of $\PPM$, we first need some notation.  Recall that for $\re\left(s\right),\re\left(w\right)>0$, we have (for example, see (6.2.2) of \cite{AS})
\begin{equation}\label{eqn:betacompdef}
\beta\left(s,w\right):=\beta\left(1;s,w\right)=\int_{0}^{1}u^{s-1}\left(1-u\right)^{w-1}du = \frac{\Gamma\left(s\right)\Gamma\left(w\right)}{\Gamma\left(s+w\right)}.
\end{equation}
In particular, by the duplication formula, one has
\begin{equation}\label{eqn:betacompdef2}
\beta\left(k-\frac{1}{2},\frac{1}{2}\right) = \frac{\Gamma\left(k-\frac{1}{2}\right)\Gamma\left(\frac{1}{2}\right)}{\Gamma\left(k\right)} = \binom{2k-2}{k-1}2^{2-2k}\pi.
\end{equation}
For $a>0$, $b\in \Z$, and a narrow equivalence class $\narrow\subseteq \QD$, denote
$$
r_{a,b}\left(\narrow\right):=
\begin{cases}
 1+\left(-1\right)^{k} & \text{if }\left[a,b,\frac{b^2-D}{4a}\right]\in \narrow
\text{ and }\left[-a,-b,-\frac{b^2-D}{4a}\right]\in \narrow,\vphantom{\begin{array}{l}\vspace{.1in}\end{array}}\\
1& \text{if }\left[a,b,\frac{b^2-D}{4a}\right]\in \narrow\text{ and }\left[-a,-b,-\frac{b^2-D}{4a}\right]\notin \narrow,\vphantom{\begin{array}{l}\vspace{.1in}\end{array}}\\
\left(-1\right)^{k} &  \text{if }\left[a,b,\frac{b^2-D}{4a}\right]\notin \narrow\text{ and }\left[-a,-b,-\frac{b^2-D}{4a}\right]\in \narrow,\vphantom{\begin{array}{l}\vspace{.1in}\end{array}}\\
 0 & \text{otherwise}.
\end{cases}
$$

We define the constants 
\begin{eqnarray}\label{eqn:cinftyA}
c_{\infty}\left(\narrow\right)&:=&-\frac{1}{2^{2k-2}\left(2k-1\right)\binom{2k-2}{k-1}} \sum_{a\in \N}a^{-k}\sum_{\substack{b\pmod{2a}\\ \substack{b^2\equiv D\pmod{4a}}}} r_{a,b}\left(\narrow\right),\\
\nonumber c_{\infty}&:=&-\frac{1}{2^{2k-2}\left(2k-1\right)\binom{2k-2}{k-1}} \frac{\zeta(k)}{\zeta\left(2k\right)}L_{\Delta}(k)\sum_{d\mid f}\mu(d)\chi_{\Delta}\left(d\right)d^{-k}\sigma_{1-2k}\left(\frac{f}{d}\right),
\end{eqnarray}
where $D=\Delta f^2$ and $\Delta$ is a fundamental discriminant.  They play an important role in the expansions of $\PPM$ and $\PPkD$, respectively.  By Proposition 3 of \cite{ZagierMFwhose}, the constant $c_{\infty}$ may also be written in terms of the zeta functions
$$
\zeta\left(s,D\right):=\sum_{Q\in \QD/\Gamma_1}\sum_{\substack{(m,n)\in \Gamma_Q\backslash \Z^2\\ Q(m,n)>0}}\frac{1}{Q(m,n)^s},
$$
where $\Gamma_Q\subset\Gamma_1$ is the stablizer of $Q$.  To be more precise, we have
$$
c_{\infty} =-\frac{1}{2^{2k-2}\left(2k-1\right)\binom{2k-2}{k-1}} \frac{\zeta\left(k,D\right)}{\zeta\left(2k\right)}.
$$
These zeta functions, and hence the constant $c_{\infty}$, are also closely related to the coefficients of Cohen's Eisenstein series \cite{Cohen}, modular forms of weight $k+\frac{1}{2}$.  

Before we state the theorem, we refer the reader back to the definitions of $\PP^*$ and $\Eich_{\PP}$, given in \eqref{eqn:f*def} and \eqref{eqn:Eichdef}, respectively.
\begin{theorem}\label{thm:expansion}
Suppose that $k>1$, $D>0$ is a non-square discriminant, and $\narrow\subseteq\QD$ is a narrow equivalence class.  Then, for every connected component $\CC$ of $\H\setminus \bigcup_{Q\in \narrow} S_Q$, there exists a polynomial $P_{\CC,\narrow}\in \C[X]$ of degree at most $2k-2$ such that
\begin{equation}\label{eqn:PPMPoly}
\PPM\left(\tau\right) =D^{\frac{1}{2}-k} \PP^*\left(\tau\right) -D^{\frac{1}{2}-k}\frac{\left(2k-2\right)!}{\left(4\pi\right)^{2k-1}} \Eich_{\PP}\left(\tau\right) +P_{\CC,\narrow}\left(\tau\right)
\end{equation}
for every $\tau\in \CC$.  This polynomial is explicitly given by 
\begin{equation}\label{eqn:polycomp}
P_{\CC,\narrow}\left(\tau\right)=c_{\infty}\left(\narrow\right) + \left(-1\right)^k 2^{3-2k}D^{\frac{1}{2}-k}\sum_{
\substack{Q=\left[a,b,c\right]\in \narrow\\ a|\tau|^2+bx+c>0>a}}Q\left(\tau,1\right)^{k-1}.
\end{equation}
\end{theorem}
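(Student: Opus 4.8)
The plan is to combine the differential-operator identities of Proposition~\ref{prop:xiDk-1} with the known structure theory of harmonic weak Maass forms, and then pin down the polynomial piece by a direct computation near $i\infty$ together with a wall-crossing analysis. First I would fix a connected component $\CC$ of $\H\setminus\bigcup_{Q\in\narrow}S_Q$ and consider the function
\[
G\left(\tau\right):=\PPM\left(\tau\right)-D^{\frac{1}{2}-k}\PP^*\left(\tau\right)+D^{\frac{1}{2}-k}\frac{\left(2k-2\right)!}{\left(4\pi\right)^{2k-1}}\Eich_{\PP}\left(\tau\right).
\]
On $\CC$, by Proposition~\ref{prop:xiDk-1} and the facts recalled in Section~\ref{sec:prelim} that $\xi_{2-2k}\left(\PP^*\right)=\PP$ and $\DD^{2k-1}\left(\Eich_{\PP}\right)=-\frac{\left(4\pi\right)^{2k-1}}{\left(2k-2\right)!}\PP$ (the latter being Bol's identity applied to the $q$-expansion \eqref{eqn:Eichdef}), both $\xi_{2-2k}\left(G\right)$ and $\DD^{2k-1}\left(G\right)$ vanish identically on $\CC$. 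A function on a connected open set annihilated by $\xi_{2-2k}$ is holomorphic, and a holomorphic function annihilated by $\DD^{2k-1}$ is a polynomial of degree at most $2k-2$; hence $G$ equals some $P_{\CC,\narrow}\in\C[X]$ of degree $\le 2k-2$ on $\CC$, which is exactly \eqref{eqn:PPMPoly}. The convergence needed to differentiate \eqref{eqn:hypPoincMaass3} termwise is Proposition~\ref{prop:converge}, and the real-analyticity on each component is Lemma~\ref{lem:neighbor}.

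The second and more delicate part is the explicit formula \eqref{eqn:polycomp}. I would establish it in two stages. Stage one: determine $P_{\CC,\narrow}$ for the distinguished component $\CC_\infty$ containing a neighborhood of $i\infty$ (i.e.\ $\tau=x+iy$ with $y$ large), where by Lemma~\ref{lem:neighbor}(1) all but finitely many $Q\in\narrow$ satisfy $\tau\in\CC_Q^+$, so that $\sgn\left(a|\tau|^2+bx+c\right)=\sgn(a)$ for those $Q$. Since $\PP^*\to 0$ and $\Eich_{\PP}\to 0$ as $y\to\infty$, the constant term $c_\infty(\narrow)$ of $P_{\CC_\infty,\narrow}$ is the limit of $\PPM(\tau)$ as $y\to\infty$; this is the ``constant term of the Fourier expansion'' computation for which the authors thank Mellit. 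Concretely, one takes $y\to\infty$ in \eqref{eqn:hypPoincMaass3}, uses $\psi(v)=\frac12\beta\left(v;k-\frac12,\frac12\right)\to\frac12\beta\left(k-\frac12,\frac12\right)=2^{1-2k}\binom{2k-2}{k-1}\pi/2$ (by \eqref{eqn:betacompdef2}) term-by-term via the sign decomposition above, writes $Q(\tau,1)^{k-1}\sim a^{k-1}\tau^{2k-2}$ pairing $Q=[a,b,c]$ with $[-a,-b,-c]$ so the leading powers of $\tau$ cancel down to the constant, and reorganizes the sum over $a>0$ and $b\pmod{2a}$ with $b^2\equiv D\pmod{4a}$; the bookkeeping of which of $[a,b,\tfrac{b^2-D}{4a}]$ and $[-a,-b,-\tfrac{b^2-D}{4a}]$ lie in $\narrow$ is exactly the factor $r_{a,b}(\narrow)$, producing \eqref{eqn:cinftyA}. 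Stage two: pass from $\CC_\infty$ to an arbitrary component $\CC$ by wall crossing. If $\CC$ and $\CC'$ are adjacent, separated by a single geodesic $S_Q$ with $Q=[a,b,c]\in\narrow$, then comparing \eqref{eqn:hypPoincMaass3} on the two sides, only the $Q$-term (and its $\Gamma_Q$-translates, but within a fixed $\narrow$ we may take representatives) changes sign across $S_Q$; since $\PP^*$ and $\Eich_{\PP}$ are continuous on all of $\H$, the jump $P_{\CC',\narrow}-P_{\CC,\narrow}$ equals, up to the normalizing constant $\frac{(-1)^kD^{\frac{1}{2}-k}}{\binom{2k-2}{k-1}\pi}$, twice $\psi$ evaluated on $S_Q$ — where the arctangent is $\frac\pi2$, so $\psi=\frac12\beta\left(k-\frac12,\frac12\right)$ again — times $\pm Q(\tau,1)^{k-1}$, i.e.\ $\pm(-1)^k 2^{3-2k}D^{\frac{1}{2}-k}Q(\tau,1)^{k-1}$ with the sign dictated by which side one crosses toward. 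Summing these contributions along a path from $\CC_\infty$ to $\CC$ gives a telescoping sum over exactly those $Q=[a,b,c]\in\narrow$ with $a<0$ whose semicircle $S_Q$ separates $\CC$ from $\CC_\infty$, and one checks this separation condition is equivalent to $a|\tau|^2+bx+c>0>a$ for $\tau\in\CC$ (the component lies ``outside'' such an $S_Q$ while $i\infty$ lies inside it, using \eqref{eqn:sgnrewrite}); this yields precisely \eqref{eqn:polycomp}.

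The main obstacle I anticipate is making stage one rigorous: the termwise limit $y\to\infty$ must be justified uniformly (the estimates in the proof of Proposition~\ref{prop:converge}, in particular \eqref{eqn:PPbbnd} and \eqref{eqn:PPabnd}, should give the needed dominated-convergence control, but one must be careful that the surviving contribution is genuinely a constant and not a nonconstant polynomial — this is why the pairing $Q\leftrightarrow -Q$ and the cancellation of the top $2k-2$ powers of $\tau$ is essential, and it is the reason the combination $\PPM-D^{\frac12-k}\PP^*+\cdots$ rather than $\PPM$ alone has a clean limit). A secondary subtlety is correctly tracking signs and orientations in the wall-crossing step so that the final index set reads $a<0<c$-type conditions consistently with the normalization $\eta_Q=\frac{-b+\sqrt D}{2a}$ and $\sgn(\eta_Q-\eta_Q')=\sgn(a)$ fixed in Section~\ref{sec:hyperbolic}; I would verify one explicit small case (e.g.\ small $k$ with $S_{2k}=0$, where $\PPM$ is itself piecewise polynomial, matching the Remark after \eqref{eqn:PPkDdef}) as a consistency check. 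Everything else — modularity of $P_{\CC,\narrow}$-patched function, the degree bound, real-analyticity off $\ED$ — follows formally from Proposition~\ref{prop:xiDk-1}, Lemma~\ref{lem:neighbor}, and Proposition~\ref{prop:boundary}, and then Theorem~\ref{thm:converge} is immediate since \eqref{eqn:PPMPoly} exhibits $\PPM$ as locally the sum of an Eichler integral (polynomial growth at $i\infty$), a non-holomorphic Eichler integral (also polynomial growth), and a polynomial.
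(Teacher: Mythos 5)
Your first step and your wall-crossing stage are essentially the paper's argument: the combination $G=\PPM-D^{\frac{1}{2}-k}\PP^*+D^{\frac{1}{2}-k}\frac{(2k-2)!}{(4\pi)^{2k-1}}\Eich_{\PP}$ is annihilated by both $\xi_{2-2k}$ and $\DD^{2k-1}$ on each component, hence is a polynomial of degree at most $2k-2$ there, and the jump across a single wall $S_{Q_0}$ is computed exactly as in the paper's induction on $\#\bdd_{\CC}$ (via the two-sided limit argument of Proposition \ref{prop:boundary}, with $\psi$ evaluated at $1$ on $S_{Q_0}$). One small slip: $\DD^{2k-1}\left(\Eich_{\PP}\right)=\PP$, not $-\frac{(4\pi)^{2k-1}}{(2k-2)!}\PP$; with your stated identity $\DD^{2k-1}(G)$ would not vanish, though the correct identity is immediate from \eqref{eqn:Eichdef}.

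The genuine gap is in your stage one, the evaluation of the constant $c_{\infty}(\narrow)$. You propose to let $y\to\infty$ termwise and use $\psi(v)\to\frac{1}{2}\beta\left(k-\frac{1}{2},\frac{1}{2}\right)$, but the argument of $\psi$ is $v=\frac{Dy^2}{Dy^2+\left(a|\tau|^2+bx+c\right)^2}$, and for each fixed $Q=[a,b,c]$ with $a\neq 0$ one has $a|\tau|^2+bx+c\sim ay^2$, so $v\to 0$ and $\psi(v)\to\psi(0)=0$. In fact each individual term is $O\left(|a|^{-k}y^{-1}\right)$ and tends to zero; the nonzero constant arises only because, for each $a$ and $b\pmod{2a}$, roughly $y$ of the translates $b+2an$ contribute at this order, so the interchange of limit and summation that your sketch relies on is exactly what fails. (The proposed pairing of $Q$ with $-Q$ also does not "cancel leading powers of $\tau$": both terms carry the same factor $Q(\tau,1)^{k-1}$ up to $(-1)^{k-1}$, which is why $r_{a,b}(\narrow)$ contains $1+(-1)^k$ rather than a cancellation of $\tau$-powers.) The paper's substitute for this step is Poisson summation in $n$, which converts the inner sum into the integral $\I_{a,D,k}(y)$, followed by the separate and nontrivial Lemma \ref{lem:Ival}: one must first show $\I_{a,D,k}(y)$ is independent of $y>\frac{\sqrt{D}}{2a}$ (done by rerunning the differential-operator argument with $a$ fixed) and then evaluate it at the boundary value $\widetilde{y}=1$ by a contour integral with the branch-cut bookkeeping for $\arctan\left(\frac{2}{z^2}\right)=g^+(z)-g^-(z)$. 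Without some version of this computation your formula \eqref{eqn:cinftyA} for $c_{\infty}(\narrow)$ is not established, and the explicit formula \eqref{eqn:polycomp} is only determined up to an unknown additive constant.
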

\begin{remark}
In particular, for every $\tau\in \H$ with $y>\frac{\sqrt{D}}{2}$, $\PPM$ has the Fourier expansion
\begin{equation}\label{eqn:PPMFourier}
\PPM\left(\tau\right) = D^{\frac{1}{2}-k}\PP^*\left(\tau\right) -D^{\frac{1}{2}-k}\frac{\left(2k-2\right)!}{\left(4\pi\right)^{2k-1}} \Eich_{\PP}\left(\tau\right) + c_{\infty}\left(\narrow\right).
\end{equation}
One now concludes Theorem \ref{thm:PPkDexpansion} immediately by summing over all narrow classes $\narrow\subseteq\QD$.
\end{remark}
Before proving Theorem \ref{thm:expansion}, we note an immediate corollary which is useful in computing the periods of $\fkD$.  
In order to state this corollary, we abuse notation to denote by $\CC_{\alpha}$ the (unique) connected component containing $\alpha\in \Q\cup\left\{i\infty\right\}$ on its boundary.  This connected component is unique because the set 
$$
\left\{ \tau=x+iy\in \H : y>\frac{\sqrt{D}}{2}\right\}\subseteq \CC_{i\infty}
$$
and $\alpha = \gamma \left(i\infty\right)$ for some $\gamma\in \Gamma_1$.

\begin{corollary}\label{cor:polyPPkD}
Suppose that $k$ is even.  Then for every $\tau\in \CC_0$, 
$$
\PPkD\left(\tau\right)=D^{\frac{1}{2}-k}\fkD^*\left(\tau\right) -D^{\frac{1}{2}-k}\frac{\left(2k-2\right)!}{\left(4\pi\right)^{2k-1}} \Eich_{\PP}\left(\tau\right) + P_{\CC_0}\left(\tau\right),
$$
where
\begin{equation}\label{eqn:PC0def}
P_{\CC_0}\left(\tau\right):=c_{\infty} + 2^{3-2k}D^{\frac{1}{2}-k}\sum_{\substack{Q=[a,b,c]\in \QD\\ a<0<c}}Q\left(\tau,1\right)^{k-1}.
\end{equation}
\end{corollary}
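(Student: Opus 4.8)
The plan is to start from the expansion of Theorem~\ref{thm:PPkDexpansion}, whose local polynomial is obtained by summing the explicit expression \eqref{eqn:polycomp} over the (finitely many) narrow classes $\narrow\subseteq\QD$, and then to evaluate that polynomial for the distinguished component $\CC=\CC_0$.

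\emph{Assembling the polynomial.} Since $\QD$ is the disjoint union of its narrow classes and each connected component $\CC$ of $\H\setminus\ED$ is contained in a unique component $\CC_{\narrow}$ of $\H\setminus\bigcup_{Q\in\narrow}S_Q$ (because $\bigcup_{Q\in\narrow}S_Q\subseteq\ED$ and $\CC$ is connected), adding Theorem~\ref{thm:expansion} over $\narrow$ reproduces Theorem~\ref{thm:PPkDexpansion} with local polynomial $P_{\CC}=\sum_{\narrow}P_{\CC_{\narrow},\narrow}$. Inserting \eqref{eqn:polycomp} then gives, for $\tau\in\CC$,
$$
P_{\CC}(\tau)=c_{\infty}+2^{3-2k}D^{\frac12-k}\!\!\sum_{\substack{Q=[a,b,c]\in\QD\\ a|\tau|^2+bx+c>0>a}}\!\!\!\!Q(\tau,1)^{k-1},
$$
where the sign $(-1)^k$ appearing in \eqref{eqn:polycomp} has been absorbed because $k$ is even, the constants $c_{\infty}(\narrow)$ combine (via \eqref{eqn:cinftyA} and \eqref{eqn:Zagier}) into $c_{\infty}$, and the sum is finite by reduction theory (see Lemma~\ref{lem:neighbor}).

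\emph{Evaluating at $\CC_0$.} It then remains to identify the index set of this sum when $\tau\in\CC_0$: I claim it equals $\{[a,b,c]\in\QD:a<0<c\}$. Fix $Q=[a,b,c]\in\QD$ with $a<0$. Since $D$ is a non-square, $c=Q(0,1)\neq0$ (otherwise $D=b^2$), so $0\notin S_Q$; moreover the function $\tau\mapsto a|\tau|^2+bx+c$ is continuous and nowhere vanishing on the connected set $\CC_0$ (which is disjoint from $S_Q$), hence of constant sign there. Because $0$ lies on the boundary of $\CC_0$ — this is precisely the property by which $\CC_0$ was singled out just before the statement — letting $\tau\to0$ within $\CC_0$ shows that this constant sign is $\sgn(c)$. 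Hence $a|\tau|^2+bx+c>0$ on $\CC_0$ exactly when $c>0$, and substituting this into the displayed formula yields \eqref{eqn:PC0def} for $P_{\CC_0}$; combining with Theorem~\ref{thm:PPkDexpansion} finishes the proof.

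\emph{Main obstacle.} The one genuinely geometric point is the limit $\tau\to0$ inside $\CC_0$: one must know that $\CC_0$ really does accumulate at $0$ (built into its definition) and that $c\neq0$ forbids any sign degeneration in the limit. Once this is in place, everything else is bookkeeping on top of Theorems~\ref{thm:expansion} and~\ref{thm:PPkDexpansion}.
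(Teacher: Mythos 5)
Your proposal is correct and follows essentially the same route as the paper: sum the explicit local polynomials \eqref{eqn:polycomp} over the narrow classes, combine the constants $c_{\infty}\left(\narrow\right)$ into $c_{\infty}$ via \eqref{eqn:Zagier}, and identify the index set of the quadratic-form sum on $\CC_0$ as $\left\{Q : a<0<c\right\}$. The only cosmetic difference is that you justify this last identification by a connectedness-plus-limit argument at the boundary point $0$ (using $c=Q(0,1)\neq 0$ since $D$ is non-square), whereas the paper phrases the same fact as the roots of $Q$ straddling $0$, i.e.\ $\sgn\left(ac\right)=-1$.
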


A key step in determining the constant term of \eqref{eqn:polycomp} lies in computing the integral
$$
\I_{a,D,k}\left(y\right):= \int_{-\infty}^{\infty} \left(a\left(w+iy\right)^2 -\frac{D}{4a}\right)^{k-1}  \varphi\left(\arctan\left(\frac{\sqrt{D} y}{a\left(w^2+y^2\right)-\frac{D}{4a}}\right)\right) dw,
$$
which is defined for $y>0$, $a\in\N$, $k\in \N$, and $D>0$ a non-square discriminant.  
\begin{lemma}\label{lem:Ival}
For $a\in \N$, $D$ a non-square discriminant, and $k>1$, we have
$$
\I_{a,D,k}\left(y\right) = \left(-1\right)^{k+1}\frac{D^{k-\frac{1}{2}}}{a^{k}2^{2k-2}\left(2k-1\right)}\pi.
$$
\end{lemma}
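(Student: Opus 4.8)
The plan is to evaluate $\I_{a,D,k}(y)$ by a substitution that turns the integral into one over a half-line, then integrate by parts against $\varphi$ so that the remaining integral becomes elementary. First I would substitute $w = \sqrt{D}/(2a)\cdot t$ to pull out the scaling in $a$ and $D$: writing $a(w+iy)^2 - D/(4a) = (D/(4a))\bigl((t+2iay/\sqrt{D})^2 - 1\bigr)$, one sees that $\I_{a,D,k}(y)$ equals $(D/(4a))^{k-1}\cdot(\sqrt{D}/(2a))$ times an integral that, after the further change of variable absorbing $2ay/\sqrt{D}$ into a new height parameter, depends only on $k$. (Concretely, one should be left with something proportional to $\int_{-\infty}^\infty (s^2-1)^{k-1}\varphi(\arctan(\ldots))\,ds$ where the argument of $\arctan$ no longer involves $a,D$.) This reduces the claim to a single universal constant depending on $k$, and shows the $a$- and $D$-dependence is exactly $D^{k-1/2}/a^k$ up to the power of $2$; matching powers of $2$ is then bookkeeping.

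Next, to pin down the $k$-dependent constant, I would integrate by parts, differentiating $\varphi$ (whose derivative, by \eqref{eqn:varphidef}, is $\sin(\arctan(\cdot))^{2k-2}$, an algebraic function via \eqref{eqn:sinarctan}) and integrating the polynomial factor $(a(w+iy)^2 - D/(4a))^{k-1}$. The antiderivative of the polynomial in $w$ is again a polynomial, and by \eqref{eqn:varphibeta} one recognizes $\sin(\arctan|\sqrt{D}y/(a|\tau|^2+bx+c)|)^{2k-2}$ as $(Dy^2/|Q(\tau,1)|^2)^{k-1}$ along the relevant locus — so after the integration by parts the integrand collapses algebraically. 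The boundary term vanishes because $\varphi(\arctan(\cdot))\to 0$ as $w\to\pm\infty$ (the argument of $\arctan$ tends to $0$) and the polynomial antiderivative can be chosen to vanish there, or more carefully one tracks that the product decays. Alternatively — and this is probably cleaner — I would recognize $\I_{a,D,k}(y)$ as (a constant multiple of) the constant term in the hyperbolic Fourier expansion of $\PPMeta$ and compute it by relating it to the known constant term $h_k(\tau)=\tau^{-k}$ of Petersson's series $\PPeta$, using that $\xi_{2-2k}$ and $\DD^{2k-1}$ send $\PPMeta$ to $\PPeta$ up to explicit constants (Proposition \ref{prop:xiDk-1}); the constant term is annihilated by these operators, which forces its shape and leaves only a scalar to determine.

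The cleanest self-contained route, though, is direct: after the reduction above, evaluate $\int_{-\infty}^\infty \bigl((w+iy)^2 - c^2\bigr)^{k-1}\varphi\bigl(\arctan(2cy/((w+iy)^2-c^2 \text{-type expression}))\bigr)dw$ by contour shifting. Since $\varphi$ is built from $\psi$, which is an incomplete beta integral, one can interchange the $u$-integral in $\psi$ with the $w$-integral; for fixed $u$ the inner $w$-integral is a rational function of $w$ integrated over $\R$, computable by residues, and then the resulting $u$-integral is a complete beta integral, evaluated via \eqref{eqn:betacompdef} and \eqref{eqn:betacompdef2}. This is where the factor $\binom{2k-2}{k-1}2^{2-2k}\pi$ enters and ultimately cancels against the $\binom{2k-2}{k-1}$ in the normalization, leaving the stated $\pi/(2^{2k-2}(2k-1))$.

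The main obstacle I expect is the $w$-contour integration step: the integrand $((w+iy)^2 - c^2)^{k-1}$ raised to a non-integer-looking power in intermediate steps (from the $\psi$/beta representation) forces careful attention to branch cuts of the square root in $\arctan$ and in $\psi$, and one must be sure which residues are picked up when the contour is closed — the poles sit where $a(w^2+y^2) - D/(4a) = 0$, i.e. exactly on the circle $S_Q$, so the "non-square discriminant" hypothesis (ensuring $a\neq 0$ and these are genuine simple poles off the real axis after the shift) is doing real work. Handling the sign of $\arctan$ across that locus correctly — consistent with the $\sgn$ conventions in \eqref{eqn:PPkDdef} and \eqref{eqn:varphibeta}, where the arctangent is taken to be $\pi/2$ on $S_Q$ — is the delicate point, and the factor $(-1)^{k+1}$ should emerge precisely from tracking these signs (equivalently, from $i^{2k-2}=(-1)^{k-1}$ after the contour shift).
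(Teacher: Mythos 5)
Your opening reduction --- rescaling $w$ and $y$ by $\tfrac{2a}{\sqrt{D}}$ so that the integral becomes $\bigl(\tfrac{D}{4a}\bigr)^{k-1}\tfrac{\sqrt D}{2a}$ times a universal quantity $\I_k(\widetilde y)$ depending only on $k$ and $\widetilde y=\tfrac{2a}{\sqrt D}y$ --- is exactly the paper's first step, and it correctly isolates the $D^{k-\frac12}/a^k$ dependence. The gap is that everything after this point, i.e.\ the actual evaluation of the $k$-dependent constant, is deferred to one of three sketched routes, none of which closes. Route (b) (using that $\xi_{2-2k}$ and $\DD^{2k-1}$ kill the constant term) can only show that the constant term \emph{is} a constant; as you concede, it ``leaves only a scalar to determine,'' and that scalar is the entire content of the lemma. (The paper does use this observation, but only to prove that $\I_k(\widetilde y)$ is independent of $\widetilde y>1$; it then exploits continuity to pass to the tangency value $\widetilde y=1$, where the argument of the arctangent degenerates to $2/u^2$ and the integrand simplifies enough for a contour argument. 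This intermediate specialization is the key move your outline is missing.) Route (c) fails as stated: if you interchange the incomplete-beta $u$-integral in $\psi$ with the $w$-integral, the inner $w$-integral is \emph{not} a rational function over $\R$ --- either its domain becomes the $u$-dependent bounded set $\{w: Dy^2/|Q(w+iy,1)|^2>u\}$, or, after rescaling $u$, the integrand acquires the algebraic factor $\bigl(|Q(\tau,1)|^2-tDy^2\bigr)^{-1/2}$ with genuine branch points; you flag the branch-cut issue yourself but do not resolve it, and resolving it is the hard part. Route (a) is viable in principle, but after the integration by parts one is left with a rational integrand whose poles (the zeros of $Q(w\pm iy,1)$, sitting at $w=\pm\tfrac{\sqrt D}{2a}\mp iy$, not ``on the circle $S_Q$'' as you write) have order $k$, so the ``algebraic collapse'' you assert is really an unevaluated order-$k$ residue computation.

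For comparison, the paper's mechanism after specializing to $\widetilde y=1$ is: expand $\sin^{2k-2}$ into exponentials to write $\varphi\bigl(\arctan(2/u^2)\bigr)$ explicitly, split $\arctan(2/z^2)=g^+(z)-g^-(z)$ with $g^\pm(z)=\tfrac i2\log\bigl(\tfrac{z-\sqrt2\zeta_8^{\pm1}}{z-\sqrt2\zeta_8^{\pm3}}\bigr)$ whose branch cuts avoid the closed lower half-plane, observe that the non-$\arctan$ part and the $g^+$ part are holomorphic below $\R$ (the would-be poles are cancelled by the zeros of $((z+i)^2-1)^{k-1}$), and apply the Residue Theorem to the $g^-$ part after an integration by parts, which converts it into the integral $\int_{\sqrt2\zeta_8^{-3}}^{\sqrt2\zeta_8^{-1}}((u+i)^2-1)^{k-1}\,du=(-1)^{k-1}2^{2k-1}\beta(k,k)$; the complete beta value $\beta(k,k)$ then produces the $\tfrac1{2k-1}$. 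Your instinct that a complete beta integral and the sign $(-1)^{k+1}$ should appear at the end is right, but the path from the universal integral to that beta value is precisely what needs to be supplied.
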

Due to the technical nature of the proof of Lemma \ref{lem:Ival}, we first assume its statement and move its proof to the end of the section.  
\begin{proof}[Proof of Theorem \ref{thm:expansion}]
Suppose that $\tau\in \CC$.  As described when defining $f^*$ in \eqref{eqn:f*def}, we have 
\begin{align}\label{eqn:PP*xi}
\xi_{2-2k}\left(\PP^*\right)\left(\tau\right) &= \PP\left(\tau\right),\\
\label{eqn:PP*Dk-1}
\DD^{2k-1}\left(\PP^*\right)\left(\tau\right) &= 0.
\end{align}
Since $\DD\left(q^n\right) = nq^n$, one easily computes 
\begin{equation}\label{eqn:PPEDk-1}
\DD^{2k-1}\left(\Eich_{\PP}\right)\left(\tau\right) = \PP\left(\tau\right),
\end{equation}
where $\Eich_f$ ($f\in S_{2k}$) was defined in \eqref{eqn:Eichdef}.  Moreover, since $\Eich_{\PP}$ is holomorphic,
\begin{equation}\label{eqn:PPExi}
\xi_{2-2k}\left(\Eich_{\PP}\right)\left(\tau\right) = 0.
\end{equation}

From \eqref{eqn:PP*xi}, \eqref{eqn:PPExi}, and Proposition \ref{prop:xiDk-1}, it follows that 
$$
\xi_{2-2k}\left(\PPM-D^{\frac{1}{2}-k}\PP^*+D^{\frac{1}{2}-k}\frac{\left(2k-2\right)!}{\left(4\pi\right)^{2k-1}} \Eich_{\PP}\right)\left(\tau\right)= 0,
$$
and hence 
$$
P_{\CC,\narrow}\left(\tau\right):=\PPM\left(\tau\right)-D^{\frac{1}{2}-k}\PP^*\left(\tau\right)+ D^{\frac{1}{2}-k}
\frac{\left(2k-2\right)!}{\left(4\pi\right)^{2k-1}} \Eich_{\PP}\left(\tau\right)
$$
 is holomorphic in $\CC$.  However, from \eqref{eqn:PP*Dk-1}, \eqref{eqn:PPEDk-1}, and Proposition \ref{prop:xiDk-1}, we conclude that 
$$
\DD^{2k-1}\left(P_{\CC,\narrow}\right)=0.
$$
It follows that $P_{\CC,\narrow}$ defines a polynomial of degree at most $2k-2$ inside $\CC$, establishing \eqref{eqn:PPMPoly}.

We move on to the specific form of $P_{\CC,\narrow}$.  We rewrite the conditions $a|\tau|^2+bx+c>0>a$ in each connected component $\CC$ of $\H\setminus\ED$ so that the sum \eqref{eqn:polycomp} runs over those $[a,b,c]\in \narrow$ with $a<0$ in the set 
$$
\bdd_{\CC}=\bdd_{\CC,\narrow}:=\left\{ Q\in \narrow :  \tau\in \CC_Q^{-}\text{ for all }\tau\in \CC\right\},
$$
where $\CC_Q^-$ was given in \eqref{eqn:HPdef}.  The set $\bdd_{\CC}$ consists of precisely those $Q\in \narrow$ for which $S_Q$ (defined in \eqref{eqn:SQdef}) circumscribes $\CC$ and it is finite by Lemma \ref{lem:neighbor}.  To be more precise, a direct calculation yields
\begin{equation}\label{eqn:bddrewrite}
\sum_{\substack{Q=\left[a,b,c\right]\in \narrow\\ a|\tau|^2+bx+c>0>a}}Q\left(\tau,1\right)^{k-1}=\sum_{\substack{Q=[a,b,c]\in \bdd_{\CC}\\ a<0}}Q\left(\tau,1\right)^{k-1}
\end{equation}

Since $\bdd_{\CC}$ is finite, we may prove the claim by induction on $\# \bdd_{\CC}$.  We begin with the case $\#\bdd_{\CC}=0$,  which is precisely the case that $\CC=\CC_{i\infty}$.  
Note that for $\tau=x+iy$, the equation $a\left|\tau\right|^2 + bx +\frac{b^2-D}{4a}=0$ gives the circle centered at $-\frac{b}{2a}$ of radius $\frac{\sqrt{D}}{2|a|}<\frac{\sqrt{D}}{2}$.  Hence every $\tau\in \H$ with $\im\left(\tau\right)>\frac{\sqrt{D}}{2}$ is in the same connected component $\CC_{i\infty}$.  It follows that $P_{\CC_{i\infty},\narrow}$ is fixed under translations and hence is a constant which we now show agrees with $c_{\infty}\left(\narrow\right)$.  

For $y>\frac{\sqrt{D}}{2}$, we use Poisson summation on \eqref{eqn:hypPoincMaass3}.  One may restrict to $a>0$ by the change of variables $a\to -a$ and $b\to -b$.  Rewrite $b$ as $b+2an$ and note that
\begin{align*}
 a\left|\tau\right|^2 + \left(b+2an\right)x + \frac{\left(b+2an\right)^2-D}{4a}&=a\left|\tau+n\right|^2 + b\left(x+n\right) +\frac{b^2-D}{4a},\\
a \tau ^2 + \left(b+2an\right)\tau + \frac{\left(b+2an\right)^2-D}{4a}&=a\left(\tau+n\right)^2+b\left(\tau+n\right)+\frac{b^2-D}{4a},
\end{align*}
and that the $\sgn$ term in \eqref{eqn:hypPoincMaass3} is always positive for $y>\frac{\sqrt{D}}{2}$.  Hence \eqref{eqn:hypPoincMaass3} becomes
\begin{multline*}
\PPM\left(\tau\right) =\frac{\left(-1\right)^{k}D^{\frac{1}{2}-k}}{\binom{2k-2}{k-1}\pi}\sum_{a\in \N}\sum_{\substack{b\pmod{2a}\\ \substack{b^2\equiv D\pmod{4a}\\ Q=\left[a,b,\frac{b^2-D}{4a}\right]}}}
\hspace{-.12in} r_{a,b}\left(\narrow\right) \sum_{n\in \Z}  Q\left(\tau+n,1\right)^{k-1}\\
\times  \varphi\left(\arctan\left|\frac{\sqrt{D} y}{a\left|\tau+n\right|^2 + b\left(x+n\right) +\frac{b^2-D}{4a}}\right|\right).
\end{multline*}

Applying Poisson summation to the inner sum and using the change of variables $w\to w-\frac{b}{2a}+iy$, the associated constant term becomes
$$
\int_{-\infty+iy}^{\infty+iy}  Q\left(w,1\right)^{k-1} \varphi\left(\arctan\left(\frac{\sqrt{D} y}{a\left|w\right|^2 + b\re\left(w\right) +c}\right)\right) dw=\I_{a,D,k}\left(y\right).
$$
We immediately conclude \eqref{eqn:PPMFourier} by Lemma \ref{lem:Ival}, establishing the case when $\bdd_{\CC}=\emptyset$.

Next suppose that $\#\bdd_{\CC}=n>0$ and choose $Q_0\in \bdd_{\CC}$.  Since two circles intersect at most twice and $\bdd_{\CC}$ is finite by Lemma \ref{lem:neighbor}, it follows that there exists an (open) neighborhood $N$ containing an arc along the geodesic $S_{Q_0}$ (defined in \eqref{eqn:SQdef}) which does not intersect any other geodesics $S_Q$ for $Q\in \QD$.  In other words, there exists $\tau_0\in S_{Q_0}$ and a neighborhood $N$ of $\tau_0$ for which
$$
N_1:=N\cap \ED\subset S_{Q_0}.
$$
Thus $N_1$ is on the boundary of precisely two connected components, $\CC$ and another connected component, which we denote $\CC_1$.  Then $\CC_1$ contains those $\tau\in N$ for which $\tau=\tau_1+iw$ for some $\tau_1\in N_1$ and $w>0$ and $\CC$ contains those for which $\tau=\tau_1-iw$.  Our goal is to show (the analytic continuation of) identity \eqref{eqn:polycomp} for every $\tau\in N_1$, hence concluding the result by the identity theorem.  One sees immediately that $\bdd_{\CC_{1}}\subsetneq \bdd_{\CC}$, since $Q\notin \bdd_{\CC_1}$.  Hence by induction, we have 
\begin{equation}\label{eqn:polyCC1}
P_{\CC_{1},\narrow}\left(\tau\right)=c_{\infty}\left(\narrow\right) - \left(-1\right)^k 2^{2-2k}D^{\frac{1}{2}-k}\sum_{Q=\left[a,b,c\right]\in \bdd_{\CC_{1}}}\sgn(a)Q\left(\tau,1\right)^{k-1}.
\end{equation}
Since each summand in \eqref{eqn:PPMPoly} is piecewise continuous, for $\tau_1\in N_1$, we have
$$
\lim_{w\to 0^+} \left(\PPM\left(\tau-iw\right) - \PPM\left(\tau+iw\right)\right)= P_{\CC,\narrow}\left(\tau\right) - P_{\CC_1,\narrow}\left(\tau\right).
$$
However, arguing as in \eqref{eqn:circavg} and \eqref{eqn:circavg2}, we may rewrite the limit to obtain, for every $\tau\in N_1$,
\begin{multline}\label{eqn:polydiff}
P_{\CC,\narrow}\left(\tau\right) - P_{\CC_1,\narrow}\left(\tau\right) = \lim_{r\to 0^+} \left(\PPM\left(\tau-ir\right) - \PPM\left(\tau+ir\right)\right) \\
=-\frac{\left(-1\right)^{k}D^{\frac{1}{2}-k}}{\binom{2k-2}{k-1}\pi}\sum_{Q=[a,b,c]\in \bd_{\tau,\narrow}} \sgn(a)Q\left(\tau,1\right)^{k-1}\beta\left(\frac{Dy^2}{\left|Q\left(\tau,1\right)\right|^2}; k-\frac{1}{2},\frac{1}{2}\right),
\end{multline}
where $\bd_{\tau,\narrow}:=\left\{ Q\in \narrow: \tau\in S_Q\right\}$.  By the definition of $N_1$, we know that $\bd_{\tau,\narrow}\subseteq \left\{Q_0,-Q_0\right\}$, because $S_{Q}=S_{\widetilde{Q}}$ if and only if $\widetilde{Q}=Q$ or $\widetilde{Q}=-Q$.  Moreover, $\left|Q\left(\tau,1\right)\right|^2 = Dy^2$ for every $\tau\in N_1$.  Since $\bdd_{\CC}=\bdd_{\CC_1}\cup\left( \left\{\pm Q_0\right\} \cap \narrow\right)$, we may hence combine definition \eqref{eqn:betacompdef} of $\beta\left(k-\frac{1}{2},\frac{1}{2}\right)$ with \eqref{eqn:polydiff} and \eqref{eqn:polyCC1} to obtain (for every $\tau\in N_1$)
\begin{equation}\label{PCexplicit}
P_{\CC,\narrow}\left(\tau\right) = c_{\infty}\left(\narrow\right) -\frac{\left(-1\right)^kD^{\frac{1}{2}-k}}{\binom{2k-2}{k-1}\pi}\beta\left(k-\frac{1}{2},\frac{1}{2}\right) \sum_{Q\in \bdd_{\CC}}\sgn(a)Q\left(\tau,1\right)^{k-1}.
\end{equation}
The result follows by \eqref{eqn:betacompdef2}.
\end{proof}
\begin{proof}[Proof of Corollary \ref{cor:polyPPkD}] 
The polynomial $P_{\CC_0}$ is obtained by 
$$
P_{\CC_0} =\sum_{\narrow} P_{\CC_0,\narrow},
$$
where the sum runs over all narrow classes of discriminant $D$.  However, each $Q\in \QD$ is contained in precisely one narrow class $\narrow$, and hence, plugging in \eqref{eqn:polycomp} and \eqref{eqn:bddrewrite}, one obtains
$$
P_{\CC_0}\left(\tau\right)=\sum_{\narrow} P_{\CC_0,\narrow}\left(\tau\right)=\sum_{\narrow} c_{\infty}\left(\narrow\right)- 2^{2-2k}D^{\frac{1}{2}-k}\sum_{Q=\left[a,b,c\right]\in \bigcup_{\narrow} \bdd_{\CC_0,\narrow}} Q\left(\tau,1\right)^{k-1}.
$$
Comparing \eqref{eqn:cinftyA} (with $k$ even) and \eqref{eqn:Zagier}, we have 
$$
\sum_{\narrow} c_{\infty}\left(\narrow\right)=c_{\infty},
$$
and it remains to compute $\bigcup_{\narrow} \bdd_{\CC_0,\narrow}$.  This set consists of precisely those $Q=\left[a,b,c\right]\in\QD$ for which one root is positive and one root is negative, or in other words, $\sgn\left(ac\right)=-1$.  By the change of variables $Q\to -Q$, we may assume that $a<0<c$.  The corollary now follows.
\end{proof}

\begin{proof}[Proof of Lemma \ref{lem:Ival}]
We first set $\widetilde{y}:= \frac{2a}{\sqrt{D}}y$ and make the change of variables $u=\frac{2a}{\sqrt{D}} w$, from which we obtain
$$
\I_{a,D,k}\left(y\right) = \frac{D^{k-\frac{1}{2}}}{a^{k}2^{2k-1}} \int_{-\infty}^{\infty} \left(\left(u+i\widetilde{y}\right)^2-1\right)^{k-1} \varphi\left(\arctan\left(\frac{2\widetilde{y}}{u^2+\widetilde{y}^2-1}\right)\right) du.
$$
Now define
\begin{equation}\label{eqn:Ikdef}
\I_k\left(\widetilde{y}\right):=\int_{-\infty}^{\infty} \left(\left(u+i\widetilde{y}\right)^2-1\right)^{k-1} \varphi\left(\arctan\left(\frac{2\widetilde{y}}{u^2+\widetilde{y}^2-1}\right)\right) du.
\end{equation}
We next show that $\I_{k}\left(\widetilde{y}\right)$ is independent of $\widetilde{y}>1$ (or equivalently $y>\frac{\sqrt{D}}{2a}$).  Note that, for $a\in \N$ and $b\pmod{2a}$ ($b^2\equiv D\pmod{4a}$) fixed, either every $Q=\left[a,b,c\right]$ is an element of $\narrow$ or none of them are, because translations always give two equivalent quadratic forms.  Recall that $\xi_{2-2k}\left(\PPM\right)=\PP$ and $D^{2k-1}\left(\PPM\right) = c\PP$, for some constant $c\in \C$, were shown termwise.  Hence, arguing as before, but with $a$ fixed, the polynomial in the connected component including $i\infty$ must be constant and hence we get independence of $y>\frac{\sqrt{D}}{2a}$, because no discontinuities exist for $y>\frac{\sqrt{D}}{2a}$.  Thus, \eqref{eqn:Ikdef} is constant for $\widetilde{y}>1$.  Since \eqref{eqn:Ikdef} is continuous for $\widetilde{y}>0$, (although only constant for $\widetilde{y}\geq 1$) for any $\widetilde{y}\geq 1$ we have that \eqref{eqn:Ikdef} agrees with
$$
\lim_{\widetilde{y}\to 1^+} \I_k\left(\widetilde{y}\right)=\I_k\left(1\right)=\int_{-\infty}^{\infty} \left(\left(u+i\right)^2-1\right)^{k-1} \varphi\left(\arctan\left(\frac{2}{u^2}\right)\right)du.
$$
It hence suffices to prove
\begin{equation}\label{eqn:Iksuff}
\I_k:=\I_k\left(1\right) = \left(-1\right)^{k-1}\frac{2\pi}{2k-1}.
\end{equation}
We first expand
\begin{equation}\label{eqn:zeros}
\left(u+i\right)^2-1 =\left(u-\sqrt{2}\zeta_8^{-1}\right)\left(u-\sqrt{2}\zeta_8^{-3}\right),
\end{equation}
where $\zeta_n:=e^{\frac{2\pi i}{n}}$.  Now rewrite 
\begin{equation}\label{eqn:varphi}
\sin\left(u\right)^{2k-2} =-\left(-1\right)^{k}2^{2-2k} \sum_{m=0}^{2k-2} \binom{2k-2}{m}\left(-1\right)^{m} e^{i\left(2m-\left(2k-2\right)\right)u}.
\end{equation}
We may then explicitly integrate \eqref{eqn:varphi} as in definition \eqref{eqn:varphidef} of $\varphi$, yielding
$$
\varphi\left(v\right) = -\left(-1\right)^k 2^{2-2k}\left(\binom{2k-2}{k-1}\left(-1\right)^{k-1} v-i \sum_{m\neq k-1} \frac{\binom{2k-2}{m}\left(-1\right)^m}{2m+2-2k} \left(e^{i\left(2m+2-2k\right)v}-1\right)\right).
$$
We then use $e^{i\theta}=\cos\left(\theta\right)+i\sin\left(\theta\right)$  and \eqref{eqn:sinarctan} to expand
\begin{multline}\label{eqn:varphiwrite}
\varphi\left(\arctan\left(\frac{2}{u^2}\right)\right)=\frac{1}{2^{2k-2}}\Bigg(\binom{2k-2}{k-1}\arctan\left(\frac{2}{u^2}\right) + \left(-1\right)^{k}i\sum_{m\neq k-1} \frac{\binom{2k-2}{m}\left(-1\right)^m}{2m+2-2k} \\
\times \left(\left(\cos\left(\arctan\left(\frac{2}{u^2}\right)\right) + i\sin\left(\arctan\left(\frac{2}{u^2}\right)\right)\right)^{2m+2-2k}-1\right)\Bigg)
\\
=\frac{1}{2^{2k-2}}\Bigg( \binom{2k-2}{k-1}\arctan\left(\frac{2}{u^2}\right) + \left(-1\right)^{k}i\sum_{m\neq k-1} \frac{\binom{2k-2}{m}\left(-1\right)^{m}}{2m+2-2k}  \left(\frac{u^2+2i}{u^2-2i}\right)^{m+1-k},
\end{multline}
since the sum involving $-1$ vanishes.  We now note that 
$$
f\left(z\right):=-i\left(1-\left(z+i\right)^2\right)^{k-1}\sum_{m\neq k-1} \frac{\binom{2k-2}{m}\left(-1\right)^m}{2m+2-2k}  \left(\frac{z^2+2i}{z^2-2i}\right)^{m+1-k}
$$
is a meromorphic function in $z$ with no poles in the lower half plane (because the poles at $\sqrt{2}\zeta_8^{-1}$ and $\sqrt{2}\zeta_8^{-3}$ are cancelled by the zeros of order $k-1$ of $\left(\left(z+i\right)^2-1\right)^{k-1}$ from \eqref{eqn:zeros}). 

In order to evaluate $\I_k$, for $R>0$ we let $C_R$ denote the path from $-R$ to $R$ followed by the semi-circle in the lower half plane from $R$ to $-R$.  Define 
$$
g^{\pm}\left(z\right):= \frac{i}{2}\log\left(\frac{z-\sqrt{2}\zeta_8^{\pm 1}}{z-\sqrt{2}\zeta_8^{\pm 3}}\right),
$$
where $\log\left(z\right)$ is the principal branch.  One easily checks that the branch cuts for $g^{\pm}$ are the the lines connecting $\zeta_8^{\pm 1}$ and $\zeta_8^{\pm 3}$ and the branch cuts for $\log\left(\frac{z^2-2i}{z^2+2i}\right)$ are those lines radially from the point $0$ to $\sqrt{2}\zeta_8^{2j-1}$ ($1\leq j\leq 4$).  Hence the sum of the logarithms equals the logarithm of the product for every $z\in C_R$ by the identity theorem (since they agree when the parameter is real).  Therefore, for all $z\in C_R$, we have (see (4.4.31) of \cite{AS})
$$
g^+\left(z\right)-g^-\left(z\right) =\frac{i}{2}\log\left(\frac{z^2-2i}{z^2+2i}\right) = \arccot\left(\frac{z^2}{2}\right)= \arctan\left(\frac{2}{z^2}\right).
$$
We may henceforth interchange between the original definition of $\varphi\left(\arccot\left(\frac{z^2}{2}\right)\right)$ and that involving logarithms (in particular, in \eqref{eqn:varphiwrite}).  We hence evaluate
$$
\int_{C_R} \left(f(z) + 2^{2-2k}\binom{2k-2}{k-1} \left(\left(z+i\right)^2-1\right)^{k-1}\left(g^+\left(z\right)-g^{-}\left(z\right)\right)\right) dz.
$$
Using \eqref{eqn:sinarctan}, for those $z$ on the semi-circle, one easily obtains 
$$
\left|\left(\left(z+i\right)^2-1\right)^{k-1}\varphi\left(\arccot\left(\frac{z^2}{2}\right)\right)\right|\ll R^{-2k}\to 0.
$$
Hence the integral along the semi-circle vanishes for $R\to\infty$.  Therefore 
$$
\I_k = \lim_{R\to\infty} \int_{C_{R}} \left(f(z)+2^{2-2k}\binom{2k-2}{k-1} \left(\left(z+i\right)^2-1\right)^{k-1}\left(g^+\left(z\right)-g^{-}\left(z\right)\right)\right)dz.
$$
Since $f\left(z\right)$ and $\left(\left(z+i\right)^2-1\right)^{k-1}g^+\left(z\right)$ are holomorphic in the lower half plane, the Residue Theorem yields
$$
\int_{C_R} \left(f\left(z\right)+ 2^{2-2k} \binom{2k-2}{k-1}  \left(\left(z+i\right)^2-1\right)^{k-1}g^{+}\left(z\right)\right) dz =0.
$$
Using integration by parts, one obtains
\begin{multline}\label{eqn:g-simp}
\int_{C_{R}} \left(\left(z+i\right)^2-1\right)^{k-1}g^-\left(z\right)dz=\frac{i}{2}\int_{C_{R}} \left(\left(z+i\right)^2-1\right)^{k-1}\log\left(\frac{z-\sqrt{2}\zeta_8^{-1}}{z-\sqrt{2}\zeta_8^{-3}}\right)dz\\
=-\frac{i}{2}\int_{C_{R}} \left(\int_0^z \left(\left(u+i\right)^2-1\right)^{k-1}du\right)\left(\frac{1}{z-\sqrt{2}\zeta_8^{-1}} - \frac{1}{z-\sqrt{2}\zeta_{8}^{-3}}\right)dz.
\end{multline}
Applying the Residue Theorem to \eqref{eqn:g-simp} (noting simple poles and a minus sign from taking the integral clockwise) and recalling the identity \eqref{eqn:betacompdef}, we obtain 
\begin{align*}
\I_k &=2^{2-2k}\pi\binom{2k-2}{k-1}\int_{\sqrt{2}\zeta_8^{-3}}^{\sqrt{2}\zeta_8^{-1}} \left(\left(u+i\right)^2-1\right)^{k-1}du\\
&=2\pi \left(-1\right)^{k-1}  \binom{2k-2}{k-1}\int_{0}^1 \left(u\left(1-u\right)\right)^{k-1} du
= 2\pi  \left(-1\right)^{k-1} \binom{2k-2}{k-1}\beta\left(k,k\right) = \frac{2\pi\left(-1\right)^{k-1}}{2k-1},
\end{align*}
where $u\to 2u+\sqrt{2}\zeta_8^{-3}$ in the second identity.  This is the desired equality \eqref{eqn:Iksuff}.
\end{proof}

We are finally ready to prove Theorem \ref{thm:converge}.  By taking linear combinations of the $\PPM$, it suffices to show the following.
\begin{theorem}\label{thm:localMaass}
For $k>1$, $D$ a non-square discriminant, and $\narrow\subset \QD$ a narrow class, the function $\PPM$ is a weight $2-2k$ locally harmonic Maass form with exceptional set $\ED$.
\end{theorem}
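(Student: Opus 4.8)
The plan is to verify the four defining conditions of a weight $2-2k$ locally harmonic Maass form in turn; almost everything needed has already been established in Sections \ref{sec:hyperbolic}--\ref{sec:expansion}, so the task is mostly one of assembly.

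\emph{Modularity.} First I would establish $\PPM\big|_{2-2k}\gamma=\PPM$ for every $\gamma\in\Gamma_1$. By Lemma \ref{lem:PPMQ}, $\PPM$ is a fixed nonzero multiple of the hyperbolic Poincar\'e series $\PPMeta=\sum_{\delta\in\Gamma_{\eta}\backslash\Gamma_1}\phistr\big|_{2-2k}A\delta$, and by Proposition \ref{prop:converge} this series converges compactly, hence absolutely, on $\H\setminus\ED$ (where each summand is defined). Thus right multiplication by $\gamma$ merely permutes the cosets $\Gamma_{\eta}\backslash\Gamma_1$, which yields $\PPMeta\big|_{2-2k}\gamma=\PPMeta$, and hence $\PPM\big|_{2-2k}\gamma=\PPM$, on the dense open set $\H\setminus\ED$. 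To extend this to the points of $\ED$, I would appeal to Proposition \ref{prop:boundary}: $\Gamma_1$ acts on $\H$ by homeomorphisms which carry each geodesic $S_Q$ to $S_{Q\circ\gamma^{-1}}$ and the two sides of $S_Q$ to the two sides of its image, so the value of $\PPM$ on $\ED$, prescribed as the average of the two one-sided limits, again transforms with weight $2-2k$.

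\emph{Local harmonicity, real analyticity, and growth.} Let $\CC$ be a connected component of $\H\setminus\ED$. Theorem \ref{thm:expansion} gives, on $\CC$,
\[
\PPM=D^{\frac{1}{2}-k}\PP^*-D^{\frac{1}{2}-k}\frac{(2k-2)!}{(4\pi)^{2k-1}}\Eich_{\PP}+P_{\CC,\narrow},
\]
and each summand on the right is real analytic ($\PP^*$ is the non-holomorphic Eichler integral of a cusp form, $\Eich_{\PP}$ is holomorphic, $P_{\CC,\narrow}$ is a polynomial); hence $\PPM$ is real analytic on the open set $\CC$, which is a neighborhood of each of its points. That $\Delta_{2-2k}(\PPM)\equiv 0$ on $\H\setminus\ED$ is equation \eqref{eqn:Deltaxi} of Proposition \ref{prop:xiDk-1} (equivalently, it follows from $\Delta_{2-2k}=-\xi_{2k}\xi_{2-2k}$ together with the holomorphy of $\xi_{2-2k}(\PPM)=D^{\frac{1}{2}-k}\PP$ there). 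This settles conditions (1)--(2). Condition (3) is precisely Proposition \ref{prop:boundary}. For condition (4) I would use the Fourier expansion \eqref{eqn:PPMFourier}: for $y>\frac{\sqrt D}{2}$,
\[
\PPM(\tau)=D^{\frac{1}{2}-k}\PP^*(\tau)-D^{\frac{1}{2}-k}\frac{(2k-2)!}{(4\pi)^{2k-1}}\Eich_{\PP}(\tau)+c_{\infty}(\narrow),
\]
and since $\PP$ is a cusp form both $\PP^*$ and $\Eich_{\PP}$ decay as $y\to\infty$, so $\PPM$ is bounded near $i\infty$, in particular of at most polynomial growth.

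\emph{Main obstacle.} All of the genuinely analytic content -- compact convergence (Proposition \ref{prop:converge}), the images under $\xi_{2-2k}$ and $\DD^{2k-1}$ (Proposition \ref{prop:xiDk-1}), the integral evaluation (Lemma \ref{lem:Ival}) behind the explicit expansion (Theorem \ref{thm:expansion}), and the averaging identity on $\ED$ (Proposition \ref{prop:boundary}) -- has already been carried out. Within the present assembly the only step requiring real care is the passage from $\Gamma_1$-invariance on the dense set $\H\setminus\ED$ to invariance on all of $\H$: one must check that the averaging convention defining $\PPM$ on the exceptional set is compatible with the group action, which is where one uses that $\Gamma_1$ permutes the geodesics $S_Q$ while respecting their two sides.
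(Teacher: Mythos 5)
Your proposal is correct and follows essentially the same route as the paper: modularity via Lemma \ref{lem:PPMQ} and the absolute convergence of Proposition \ref{prop:converge}, local harmonicity from \eqref{eqn:Deltaxi}, condition (3) from Proposition \ref{prop:boundary}, and the growth condition from the expansion \eqref{eqn:PPMPoly} with $\CC=\CC_{i\infty}$. Your additional remark that the averaging convention on $\ED$ is compatible with the $\Gamma_1$-action (since $\Gamma_1$ permutes the geodesics $S_Q$ and respects their two sides) is a point the paper leaves implicit, but it does not change the argument.
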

\begin{proof}
Suppose that $\gamma_1\in\Gamma_1$.  By Lemma \ref{lem:PPMQ}, we may choose a hyperbolic pair $\eta,\eta'$ so that 
$$
\PPM\Big|_{2-2k}\gamma_1 = \frac{D^{-\frac{k}{2}}}{\binom{2k-2}{k-1}\pi}\PPMeta\Big|_{2-2k}\gamma_1 = \frac{D^{-\frac{k}{2}}}{\binom{2k-2}{k-1}\pi}\sum_{\gamma\in \Gamma_{\eta}\backslash \Gamma_1}  \phistr\Big |_{2-2k} A\gamma\gamma_1.
$$
Due to the absolute convergence proven in Proposition \ref{prop:converge}, we may rearrange the sum, from which we conclude weight $2-2k$ modularity.  The local harmonicity of $\PPM$ was shown in \eqref{eqn:Deltaxi}.  Condition 3 is precisely Proposition \ref{prop:boundary}.  The functions $\Eich_{\PP}$ and $\PP^*$ decay towards $i\infty$.  Thus, using \eqref{eqn:polycomp} with $\CC=\CC_{i\infty}$, \eqref{eqn:PPMPoly} implies that $\PPM$ is bounded towards $i\infty$.
\end{proof}

\section{Relations to period polynomials}\label{sec:periodpoly}
The main goal of this section is to use Corollary \ref{cor:polyPPkD} to supply a different perspective on Theorem \ref{thm:ratperiod}, i.e., the fact that the even periods of $\fkD$ are rational.  We begin by giving a formal definition of periods and period polynomials.  For $f\in S_{2k}$ 
and $0\leq n\leq 2k-2$, the \begin{it}$n$-th period of $f$\end{it} is defined by (see Section 1.1 of \cite{KohnenZagierRational}) 
\begin{equation}\label{eqn:nthperiod}
r_n\left(f\right):=\int_{0}^{\infty} f\left(it\right)t^n dt = n!\left(2\pi\right)^{-n-1}L\left(f,n+1\right),
\end{equation}
where $L\left(f,s\right)$ is the $L$-series associated to $f$.  These can be nicely packaged into a \begin{it}period polynomial\end{it}
$$
r\left(f;X\right):=\int_{0}^{i\infty} f\left(z\right) \left(X-z\right)^{2k-2}dz = \sum_{n=0}^{2k-2} i^{1-n}\binom{2k-2}{n}r_n\left(f\right) X^{2k-2-n}
$$
and we denote the even part of the period polynomial by 
$$
r^+\left(f;X\right):=\sum_{\substack{0\leq n\leq 2k-2\\ n\text{ even}}} \left(-1\right)^{\frac{n}{2}}\binom{2k-2}{n}r_n\left(f\right) X^{2k-2-n}.
$$

We now describe how the polynomials $P_{\CC,\narrow}$ in Theorem \ref{thm:expansion} are related to period polynomials.  We note that while neither $\PP^*$ nor $\Eich_{\PP}$ satisfy modularity, up to the constant term they are the non-holomorphic and holomorphic parts of certain harmonic weak Maass forms, respectively.  This follows because the operator $\xi_{2-2k}$ is surjective by work of Bruinier and Funke \cite{BruinierFunke} and $\DD^{2k-1}$ is surjective by work of Bruinier, Ono, and Rhoades \cite{BruinierOnoRhoades}.  For $\gamma\in \Gamma_1$, $\PP^*$ and $\Eich_{\PP}$ satisfy 
\begin{align}
\label{eqn:rpoly}
\PP^*\Big|_{2-2k}\gamma\left(\tau\right) &= \PP^* + r_{\gamma}\left(\tau\right),\\
\label{eqn:Rpoly}
\Eich_{\PP}\Big|_{2-2k}\gamma\left(\tau\right) &= \Eich_{\PP} + R_{\gamma}\left(\tau\right)
\end{align}
for certain period polynomials $r_{\gamma}$ and $R_{\gamma}$ (each is of degree at most $2k-2$).  However, it is known that there exists $C\in \C$ such that
\begin{equation}\label{eqn:Knopp}
-\frac{\left(2k-2\right)!}{\left(4\pi\right)^{2k-1 }}R_{\gamma}\left(\tau\right) =r_{\gamma}^{c}\left(\tau\right) + C\left(j\left(\gamma,\tau\right)^{2k-2}-1\right),
\end{equation}
where $P^{c}\in \C[X]$ is the polynomial whose coefficients are the complex conjugates of the coefficients of $P\in \C[X]$ \cite{Knopp}.   The following proposition relates the period polynomials to the polynomials $P_{\CC,\narrow}$ from the previous section.
\begin{proposition}
Suppose that $D>0$ is a non-square discriminant, $\narrow\subseteq\QD$ is a narrow class, $\CC$ is a connected component of $\H\setminus \ED$, $\tau\in \CC$, and $\gamma\in \Gamma_1$.  Then
$$ 
P_{\CC,\narrow}\left(\tau\right) = D^{\frac{1}{2}-k}r_{\gamma}\left(\tau\right)- D^{\frac{1}{2}-k}\frac{\left(2k-2\right)!}{\left(4\pi\right)^{2k-1}} R_{\gamma}\left(\tau\right) + P_{\gamma \CC,\narrow}\left(\gamma\tau\right) j\left(\gamma,\tau\right)^{2k-2}.
$$
In particular, if $\gamma\CC=\CC_{i\infty}$, then 
\begin{equation}\label{eqn:periodinfty}
P_{\CC,\narrow}\left(\tau\right)  =  D^{\frac{1}{2}-k}r_{\gamma}\left(\tau\right)- D^{\frac{1}{2}-k}\frac{\left(2k-2\right)!}{\left(4\pi\right)^{2k-1}} R_{\gamma}\left(\tau\right)+c_{\infty}\left(\narrow\right) j\left(\gamma,\tau\right)^{2k-2}.
\end{equation}
\end{proposition}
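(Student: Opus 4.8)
The plan is to establish the transformation formula by combining the explicit expansion of $\PPM$ from Theorem \ref{thm:expansion} with the known near-modularity of the Eichler integrals $\PP^*$ and $\Eich_{\PP}$ recorded in \eqref{eqn:rpoly} and \eqref{eqn:Rpoly}. First I would fix $\gamma\in\Gamma_1$ and $\tau\in\CC$, so that $\gamma\tau$ lies in the connected component $\gamma\CC$ (the sets $S_Q$ and hence $\ED$ are $\Gamma_1$-invariant, so $\gamma$ genuinely permutes the connected components of $\H\setminus\ED$). By the modularity of $\PPM$ established in Theorem \ref{thm:localMaass} (condition (1) in the definition of locally harmonic Maass forms), we have
$$
\PPM\left(\tau\right) = \PPM\Big|_{2-2k}\gamma\left(\tau\right) = j\left(\gamma,\tau\right)^{2k-2}\PPM\left(\gamma\tau\right),
$$
where I write $j(\gamma,\tau):=c\tau+d$ for $\gamma=\sm{a}{b}{c}{d}$ and note $j(\gamma,\tau)^{-2+2k}=j(\gamma,\tau)^{2k-2}$.

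Next I would substitute the expansion \eqref{eqn:PPMPoly} into both sides. On the left, since $\tau\in\CC$, we get $D^{\frac12-k}\PP^*(\tau) - D^{\frac12-k}\frac{(2k-2)!}{(4\pi)^{2k-1}}\Eich_{\PP}(\tau) + P_{\CC,\narrow}(\tau)$. On the right, since $\gamma\tau\in\gamma\CC$, we get $j(\gamma,\tau)^{2k-2}$ times $D^{\frac12-k}\PP^*(\gamma\tau) - D^{\frac12-k}\frac{(2k-2)!}{(4\pi)^{2k-1}}\Eich_{\PP}(\gamma\tau) + P_{\gamma\CC,\narrow}(\gamma\tau)$. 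Now I apply the slash-operator identities \eqref{eqn:rpoly} and \eqref{eqn:Rpoly}: since $\PP^*\big|_{2-2k}\gamma(\tau) = j(\gamma,\tau)^{2k-2}\PP^*(\gamma\tau)$, these read $j(\gamma,\tau)^{2k-2}\PP^*(\gamma\tau) = \PP^*(\tau) + r_\gamma(\tau)$ and likewise $j(\gamma,\tau)^{2k-2}\Eich_{\PP}(\gamma\tau) = \Eich_{\PP}(\tau) + R_\gamma(\tau)$. Plugging these into the right-hand side, the genuine Eichler-integral terms $D^{\frac12-k}\PP^*(\tau)$ and $-D^{\frac12-k}\frac{(2k-2)!}{(4\pi)^{2k-1}}\Eich_{\PP}(\tau)$ exactly cancel against the corresponding terms on the left-hand side, and we are left with
$$
P_{\CC,\narrow}\left(\tau\right) = D^{\frac12-k}r_\gamma(\tau) - D^{\frac12-k}\frac{(2k-2)!}{(4\pi)^{2k-1}}R_\gamma(\tau) + P_{\gamma\CC,\narrow}(\gamma\tau)\,j(\gamma,\tau)^{2k-2},
$$
which is the first claimed identity.

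For the particular case $\gamma\CC = \CC_{i\infty}$, I would invoke the explicit formula \eqref{eqn:polycomp} (or equivalently the Fourier expansion remark \eqref{eqn:PPMFourier}): on $\CC_{i\infty}$ the polynomial part is the constant $c_\infty(\narrow)$, since $\bdd_{\CC_{i\infty}}=\emptyset$. Substituting $P_{\gamma\CC,\narrow}(\gamma\tau) = c_\infty(\narrow)$ into the identity just proved yields \eqref{eqn:periodinfty}. I do not anticipate a serious obstacle here; the only point requiring a little care is bookkeeping of the slash-operator conventions — in particular making sure that the weight $2-2k$ slash identities \eqref{eqn:rpoly}, \eqref{eqn:Rpoly} are applied in the direction that produces the automorphy factor $j(\gamma,\tau)^{2k-2}$ multiplying $P_{\gamma\CC,\narrow}(\gamma\tau)$ rather than its inverse, and that the definition of $r_\gamma$, $R_\gamma$ being polynomials of degree at most $2k-2$ is consistent with their appearance here without an extra automorphy factor. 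Everything else is a direct substitution and cancellation.
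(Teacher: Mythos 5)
Your proposal is correct and follows essentially the same route as the paper: apply the modularity of $\PPM$, substitute the expansion \eqref{eqn:PPMPoly} on both sides, cancel the Eichler-integral terms via \eqref{eqn:rpoly} and \eqref{eqn:Rpoly}, and then specialize using $P_{\CC_{i\infty},\narrow}=c_{\infty}\left(\narrow\right)$ from \eqref{eqn:polycomp}. The sign and automorphy-factor bookkeeping you flag is handled correctly.
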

\begin{proof}
By the modularity of $\PPM$, we have 
$$
0=\PPM\Big|_{2-2k}\gamma\left(\tau\right)-\PPM\left(\tau\right).
$$
However, plugging in \eqref{eqn:PPMPoly} and definitions \eqref{eqn:rpoly} and \eqref{eqn:Rpoly} of the period polynomials, this becomes
$$
0 = D^{\frac{1}{2}-k}r_{\gamma}\left(\tau\right) -D^{\frac{1}{2}-k}\frac{\left(2k-2\right)!}{\left(4\pi\right)^{2k-1}} R_{\gamma}\left(\tau\right)+P_{\gamma \CC,\narrow}\left(\gamma\tau\right) j\left(\gamma,\tau\right)^{2k-2} - P_{\CC,\narrow}\left(\tau\right).
$$
This yields the first statement of the proposition.  The second statement simply follows from the fact that $P_{\CC_{i\infty},\narrow}=c_{\infty}\left(\narrow\right)$ by \eqref{eqn:polycomp}.
\end{proof}

\begin{proof}[Proof of Theorem \ref{thm:ratperiod}]
In order to get information about the even periods, we first show that 
\begin{equation}\label{eqn:r+}
r\left(\fkD;\tau\right)-r^c\left(\fkD;\tau\right)=2ir^+\left(\fkD;\tau\right).
\end{equation}
To see this, note that $\fkD\left(iy\right)$ is real because the change of variables $b\to -b$ yields
$$
\sum_{Q=\left[a,b,c\right]\in \QD} \left(-a+iyb+c\right)^{-k} = \overline{\sum_{Q=\left[a,b,c\right]\in \QD} \left(-a+iyb+c\right)^{-k}}.
$$
The integral \eqref{eqn:nthperiod} defining $r_{n}\left(f\right)$ is hence also real, from which \eqref{eqn:r+} follows.  

Plugging $\gamma=S$ into \eqref{eqn:periodinfty} and summing over all narrow classes, we obtain
\begin{equation}\label{eqn:PCreal}
P_{\CC_{0}}\left(\tau\right)  =  D^{\frac{1}{2}-k}r_S\left(\tau\right)- D^{\frac{1}{2}-k}\frac{\left(2k-2\right)!}{\left(4\pi\right)^{k-1}} R_{S}\left(\tau\right)+c_{\infty}\tau^{2k-2},
\end{equation}
where $P_{\CC_0}$ was defined in \eqref{eqn:PC0def}.  However, it can be proven (see (1.13) of \cite{BringmannGuerzhoyKentOno}) that
\begin{equation}\label{eqn:RSpoly}
R_S\left(\tau\right) = -\frac{\left(2\pi i\right)^{2k-1}}{\left(2k-2\right)!} r\left(\fkD;\tau\right).
\end{equation}
Hence by \eqref{eqn:Knopp} and \eqref{eqn:r+}, we may rewrite \eqref{eqn:PCreal} as
\begin{multline}\label{eqn:PC0rewrite}
P_{\CC_{0}}\left(\tau\right)  = -2^{1-2k}iD^{\frac{1}{2}-k}\left( -r^c\left(\fkD;\tau\right) + r\left(\fkD;\tau\right)\right) + C\left(\tau^{2k-2}-1\right) + c_{\infty}\tau^{2k-2}\\
 =2^{2-2k}D^{\frac{1}{2}-k} r^+\left(\fkD;\tau\right) + C\left(\tau^{2k-2}-1\right) + c_{\infty}\tau^{2k-2}
\end{multline}
for some constant $C$.  We now use Corollary \ref{cor:polyPPkD} to rewrite the left hand side, obtaining
$$
c_{\infty} + 
2^{3-2k}D^{\frac{1}{2}-k}\sum_{\substack{Q=[a,b,c]\in \QD\\ a<0<c}}Q\left(\tau,1\right)^{k-1} = 2^{2-2k}D^{\frac{1}{2}-k}r^+\left(\fkD;\tau\right) + C\left(\tau^{
2k-2
}-1\right) + c_{\infty}\tau^{
2k-2
}.
$$
Rearranging yields \eqref{eqn:ratperiod}, completing the proof.
\end{proof}
\begin{remark}
We note that the above method may also be applied to reprove the rationality of the even periods of $\PP+\PPA{-\narrow}$ (cf. Theorem 5 of \cite{KohnenZagierRational}).  Note that a symmetrization is made here so that a statement similar to \eqref{eqn:r+} holds.  Without this symmetrization, one would only obtain rationality for the imaginary part of the periods of $\PP$.
\end{remark}
\section{Hecke operators}\label{sec:Hecke}
In this section, we investigate the action of the Hecke operators on $\PPkD$, proving Theorem \ref{thm:Hecke}.  
We closely follow the argument of Parson \cite{Parson} used to compute the action of the Hecke operators on $\fkD$.  
For a prime $p$, recall that the weight $2-2k$ Hecke operator $T_p$ acts on a translation invariant function $f:\H\to \C$ by
\begin{equation}\label{eqn:Heckedef}
f\Big|_{2-2k}T_p\left(\tau\right) := p^{1-2k} f\left(p\tau\right)+ p^{-1}\sum_{r\pmod{p}} f\left(\frac{\tau+r}{p}\right).
\end{equation}
In order to prove Theorem \ref{thm:Hecke}, we first compute the action of $T_p$ on the intermediary function 
$$
\PPMD{D}\left(\tau\right):=\frac{D^{\frac{1-k}{2}}}{\binom{2k-2}{k-1}\pi}\sum_{Q=\left[a,b,c\right]\in \QD'}\sgn\left(a\left|\tau\right|^2 + bx +c\right) Q\left(\tau,1\right)^{k-1} \psi\left(\frac{Dy^2}{\left|Q\left(\tau,1\right)\right|^2_{\phantom{-}}}\right),
$$
where $\QD'$ denotes the set of primitive $Q=[a,b,c]\in \QD$ (i.e., those with $\left(a,b,c\right)=1$).
\begin{proof}[Proof of Theorem \ref{thm:Hecke}]
We first prove that 
\begin{equation}\label{eqn:Heckeprim}
\PPMD{D}\Big|_{2-2k} T_p = \begin{cases}
p^{-k} \PPMD{Dp^2} + p^{-k}\left(1+\left(\frac{D}{p}\right)\right)\PPMD{D}& \text{if }p^2\nmid D_{\vphantom{\substack{A\\A}}},\\
p^{-k}\PPMD{Dp^2} +p^{-k} \left(p-\left(\frac{D/p^{2}}{p}\right)\right)\PPMD{\frac{D}{p^{2}}}& \text{if }p^2\mid D.
\end{cases}
\end{equation}
We define the multiset
$$
\BB:=\left\{\left[ap^2,bp,c\right], \Big[a,bp+2ar,ar^2+bpr+cp^2\Big]: 0\leq r\leq p-1,\ a>0,\ \left[a,b,c\right]\in \QD'\right\} 
$$
and for $g\in \N$, we define the set 
$$
\BB\left(g\right):=\left\{[\mA,\mB,\mC]\in \QDp: \left(\mA,\mB,\mC\right)=g\right\}.
$$
We first note that all $Q\in\BB$ have discriminant $Dp^2$.  A direct calculation yields
$$
\PPMD{D}\Big|_{2-2k} T_p\left(\tau\right)
 = \sum_{Q\in \BB} \sgn\left(a\left|\tau\right|^2 + bx +c\right) Q\left(\tau,1\right)^{k-1}\varphi\left(\arctan\left|\tfrac{\sqrt{D} y}{a\left|\tau\right|^2 + bx +c}\right|\right).
$$
In determining the action of the Hecke operators on the classical hyperbolic Poincar\'e series, Parson \cite{Parson} determined precisely how many choices of primitive $\left[a,b,c\right]\in \QD$ yield a representation of each $[\mA,\mB,\mC]\in\BB\left(g\right)$ with $g\in \left\{1,p,p^2\right\}$.  Then \eqref{eqn:Heckeprim} follows from this enumeration and the fact that each summand in \eqref{eqn:PPkDdef} is homogeneous of degree $k-1$ in the variables $a,b,c$.

Denote $D=\Delta f^2$ with $\Delta$ a fundamental discriminant.  We make use of the identity
$$
\PPkDD{D} = D^{-\frac{k}{2}}\sum_{g\mid f} \PPMD{\Delta g^2}
$$
and apply \eqref{eqn:Heckeprim} to $\PPMD{\Delta g^2}$.  This yields
\begin{multline}\label{eqn:Heckewprim}
\PPkDD{D}\Big|_{2-2k}{T_p} =D^{-\frac{k}{2}}\sum_{g^2\mid D} \PPMD{\Delta g^2}\Big|_{2-2k}T_p\\
= \left(Dp^2\right)^{-\frac{k}{2}}\sum_{g\mid f,\;p\nmid g}\left( \PPMD{\Delta \left(gp\right)^2} + \left(1+\left(\tfrac{\Delta g^2}{p}\right)\right)\PPMD{\Delta g^2}\right) \\
+\left(Dp^2\right)^{-\frac{k}{2}} \sum_{g\mid f,\;p\mid g} \left(\PPMD{\Delta \left(gp\right)^2} + \left(p-\left(\tfrac{\Delta \left(g/p\right)^2}{p}\right)\right)\PPMD{\Delta\left(\frac{g}{p}\right)^{2}}\right).
\end{multline}
We next combine 
$$
\sum_{g\mid f,\;p\nmid g}\left(\PPMD{\Delta \left(gp\right)^2} + \PPMD{\Delta g^2}\right)  + \sum_{p\mid g\mid f } \PPMD{\Delta \left(gp\right)^2} = \sum_{g\mid fp} \PPMD{\Delta g^2} =\left(Dp^2\right)^{\frac{k}{2}}\PPkDD{Dp^2}
$$
and
$$
\sum_{g\mid f,\;p\mid g}\PPMD{\Delta\left(\frac{g}{p}\right)^{2}}=D^{\frac{k}{2}}p^{-k}\PPkDD{\frac{D}{p^{2}}}
$$
to rewrite the right hand side of \eqref{eqn:Heckewprim} as 
$$
\PPkDD{Dp^2} + p^{1-2k}\PPkDD{\frac{D}{p^{2}}} + p^{-k}D^{-\frac{k}{2}}\left(\sum_{g\mid f,\;p\nmid g}\left(\tfrac{\Delta g^2}{p}\right)\PPMD{\Delta g^2} - \sum_{g\mid f,\;p\mid g}\left(\tfrac{\Delta \left(g/p\right)^2}{p}\right)\PPMD{\Delta\left(\frac{g}{p}\right)^{2}}\right).
$$
If $p\nmid f$, then \eqref{eqn:Hecke} follows by noting that $\left(\frac{\Delta f^2}{p}\right)=\left(\frac{\Delta g^2}{p}\right)$ for every $g\mid f$.  If $p\mid f$, then we note that $\left(\frac{\Delta \left(g/p\right)^2}{p}\right)=0$ unless $p\| g$.  In this case, the two remaining sums cancel by making the change of variables $g\to gp$ in the last sum.  Hence when $p\mid f$ one obtains
$$
\PPkDD{D}\Big|_{2-2k}T_p = \PPkDD{Dp^2} + p^{1-2k}\PPkDD{\frac{D}{p^{2}}},
$$
from which \eqref{eqn:Hecke} follows because $\left(\frac{D}{p}\right)=0$.  This completes the proof.

\end{proof}

\section{A lift of $\fkD$ from  \cite{DITrational}}\label{sec:DIT} 
As alluded to in the introduction, the functions $F_k(\tau, Q)$ constructed coefficient-wise in \cite{DITrational} via cycle integrals are closely connected to harmonic weak Maass forms related to $\fkD$.  In this section, we explicitly investigate this connection, using their functions to universally construct a lift of $\fkD$.  This leads to an intriguing relation to the locally harmonic Maass forms $\PPkD$.  

In order to state this connection, we define
$$
\mathcal{H}_k(\tau):=\fkD^*\left(\tau\right) -\frac{\left(2k-2\right)!}{\left(4\pi\right)^{2k-1}} \Eich_{\fkD}\left(\tau\right).
$$
Although the following proposition is almost certainly known to the authors of \cite{DITrational}, they do not explicitly state it.  We do so here for the benefit of the reader.
\begin{proposition}\label{prop:lift}
There exists a constant $C\in \C$ such that 
$$
\sum_{Q\in \QD/\Gamma_1}F_k(\tau,Q) + 2^{2k-2}\mathcal{H}_k(\tau) + C
$$
is a weight $2-2k$ harmonic weak Maass form.
\end{proposition}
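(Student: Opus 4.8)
The plan is to show that, for a suitable constant $C\in\C$, the function
\[
\mathcal{G}(\tau):=\sum_{Q\in \QD/\Gamma_1}F_k(\tau,Q) + 2^{2k-2}\mathcal{H}_k(\tau) + C
\]
satisfies the three conditions of Section~\ref{sec:prelim} defining a weight $2-2k$ harmonic weak Maass form. Harmonicity is immediate: each $F_k(\cdot,Q)$ is holomorphic on $\H$ by the construction in \cite{DITrational}, hence annihilated by $\Delta_{2-2k}$; and since $\Eich_{\fkD}$ is holomorphic while $\xi_{2-2k}(\fkD^*)=\fkD\in S_{2k}$ is holomorphic, \eqref{eqn:Deltaxigen} gives $\Delta_{2-2k}(\fkD^*)=-\xi_{2k}\xi_{2-2k}(\fkD^*)=0$, so $\Delta_{2-2k}(\mathcal{G})=0$. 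The growth condition is equally soft: as $\tau\to i\infty$ the functions $\fkD^*$ and $\Eich_{\fkD}$ decay (as in the proof of Theorem~\ref{thm:localMaass}), and $\sum_Q F_k(\tau,Q)$ grows at most polynomially by the estimates of \cite{DITrational}, so the same is true of $\mathcal{G}$.

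The heart of the matter is weight $2-2k$ modularity. Both summands are translation invariant — for $\mathcal{H}_k$ this follows from \eqref{eqn:f*def} together with the $1$-periodicity of $\fkD^c$ and the $q$-expansion \eqref{eqn:Eichdef} of $\Eich_{\fkD}$, and for $\sum_Q F_k$ it is readily checked from the definition in \cite{DITrational}. Hence, by \eqref{eqn:rpoly}--\eqref{eqn:Rpoly} applied with $\fkD$ in place of $\PP$, one has $\mathcal{H}_k\big|_{2-2k}\gamma-\mathcal{H}_k=\rho_\gamma$ where $\rho_\gamma:=r_\gamma-\frac{(2k-2)!}{(4\pi)^{2k-1}}R_\gamma$, while $\bigl(\sum_Q F_k\bigr)\big|_{2-2k}\gamma-\sum_Q F_k=:p_\gamma$ is the period polynomial of the cycle-integral sum, whose shape is determined in \cite{DITrational}; each of $\rho_\gamma,p_\gamma$ has degree at most $2k-2$. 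Since $\Gamma_1$ is generated by $S=\sm{0}{-1}{1}{0}$ and the translation $\sm{1}{1}{0}{1}$, and everything is translation invariant, it suffices to produce $C$ with $p_S+2^{2k-2}\rho_S=-C\bigl(\tau^{2k-2}-1\bigr)$; the cocycle relation for the (polynomial) error then extends the identity to all $\gamma\in\Gamma_1$, because $\gamma\mapsto C\bigl(j(\gamma,\tau)^{2k-2}-1\bigr)$ is itself a coboundary. For $\gamma=S$, the computation carried out in the proof of Theorem~\ref{thm:ratperiod} — namely \eqref{eqn:PC0rewrite}, combining \eqref{eqn:Knopp}, \eqref{eqn:r+} and \eqref{eqn:RSpoly} — shows that $\rho_S$ is a fixed nonzero rational multiple of $r^+(\fkD;\tau)$ modulo $\tau^{2k-2}-1$, and the main comparison of \cite{DITrational} identifies $p_S$ with a multiple of $r^+(\fkD;\tau)$ modulo $\tau^{2k-2}-1$ as well. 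Matching the two scalars forces a cancellation, leaving $p_S+2^{2k-2}\rho_S$ a multiple of $\tau^{2k-2}-1=j(S,\tau)^{2k-2}-1$, which we absorb into the definition of $C$.

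The main obstacle is precisely this normalization bookkeeping in the final step: one must extract from \cite{DITrational} the exact scalar relating the period function of $\sum_Q F_k(\tau,Q)$ to $r^+(\fkD;\tau)$ and confirm that it is the negative of $2^{2k-2}$ times the scalar appearing in $\rho_S$, so that the two genuinely cancel rather than being merely proportional (in which case the leftover $r^+(\fkD;\tau)$ would obstruct modularity). A way to avoid a delicate chase through the conventions of \cite{DITrational} is to invoke the surjectivity of $\xi_{2-2k}$ onto $S_{2k}$ (Bruinier--Funke) to fix a genuine harmonic weak Maass form $\widehat{F}$ with $\xi_{2-2k}(\widehat{F})=2^{2k-2}\fkD$; then $\mathcal{G}-\widehat{F}$ is holomorphic on $\H$ with polynomial growth, and its failure of modularity is governed by $p_\gamma$ together with the period polynomial of $\widehat{F}$ (which is controlled, via \eqref{eqn:Knopp}, by that of $2^{2k-2}\fkD^*$). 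One is then reduced once more to the agreement of two period polynomials modulo $j(\gamma,\tau)^{2k-2}-1$ — i.e.\ to the identity behind Theorem~\ref{thm:ratperiod} and the results of \cite{DITrational} — but now the constant $C$ emerges automatically as the implied multiple of $j(\gamma,\tau)^{2k-2}-1$. Finally, the resulting $\mathcal{G}$ automatically satisfies $\xi_{2-2k}(\mathcal{G})=2^{2k-2}\fkD$ and, by \eqref{eqn:Eichdef}, $\DD^{2k-1}(\mathcal{G})=\DD^{2k-1}\bigl(\sum_Q F_k\bigr)-2^{2k-2}\tfrac{(2k-2)!}{(4\pi)^{2k-1}}\fkD$, which serves as a consistency check on the chosen normalizations.
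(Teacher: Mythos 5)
Your overall strategy is the same as the paper's: reduce modularity to the generator $S$ (translation invariance being clear), compute the cocycle of $\mathcal{H}_k$ under $S$ via \eqref{eqn:rpoly}, \eqref{eqn:Rpoly}, \eqref{eqn:Knopp}, \eqref{eqn:RSpoly} and \eqref{eqn:r+} to obtain $2^{2-2k}r^+\left(\fkD;\tau\right)$ up to a multiple of $\tau^{2k-2}-1$, compare with the cocycle of $\sum_{Q}F_k(\tau,Q)$ from \cite{DITrational}, and absorb the leftover multiple of $\tau^{2k-2}-1=j(S,\tau)^{2k-2}-1$ into the constant $C$. The harmonicity and growth observations are fine, as is the remark that the extension from the generators to all of $\Gamma_1$ is automatic by the cocycle relation.

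The one genuine soft spot is exactly the one you flag and then do not close. You assert that ``matching the two scalars forces a cancellation,'' but proportionality of both cocycles to $r^+\left(\fkD;\tau\right)$ modulo $\tau^{2k-2}-1$ is not enough: if the two constants did not match, the surviving multiple of $r^+\left(\fkD;\tau\right)$ would be a genuine obstruction to modularity, since it is not itself a multiple of $\tau^{2k-2}-1$. The paper closes this by quoting Theorem 3 of \cite{DITrational} in its explicit form, namely that the $S$-cocycle of $\sum_Q F_k(\tau,Q)$ equals $-2\sum \left(a\tau^2+b\tau+c\right)^{k-1}$, the sum running over $[a,b,c]\in\QD$ with $a<0<c$, together with Theorem 4 of \cite{KohnenZagierRational} (equivalently Theorem \ref{thm:ratperiod}), which says $r^+\left(\fkD;\tau\right)$ is congruent to $2\sum \left(a\tau^2+b\tau+c\right)^{k-1}$ modulo $\tau^{2k-2}-1$ over the same forms; since $2^{2k-2}\cdot 2^{2-2k}=1$, the two cocycles cancel exactly up to a multiple of $\tau^{2k-2}-1$, which is the coboundary of a constant. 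Your proposed detour through Bruinier--Funke surjectivity does not avoid this verification: as you note yourself, it reduces to the very same comparison of period polynomials, so it adds nothing. With the explicit form of the DIT cocycle and the Kohnen--Zagier identity supplied, your argument becomes the paper's proof.
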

\begin{remark}
Suppose that $C\in \C$  satisfies the conditions of the lemma.  Then 
$$
\mathcal{G}_{1-k,D}(\tau):=\sum_{Q\in \QD/\Gamma_1}F_k(\tau,Q) + 2^{2k-2} \mathcal{H}_k(\tau) + C
$$
is a harmonic weak Maass form for which, by \eqref{eqn:PP*xi} 
and \eqref{eqn:PPExi}, 
 we have
$$
\xi_{2-2k}\left(\mathcal{G}_{1-k,D}\right)=2^{2k-2} \fkD.
$$
In particular, $2^{2-2k}\mathcal{G}_{1-k,D}$ is a lift of $\fkD$.  
\end{remark}
\begin{proof}
By Theorem 3 of \cite{DITrational}, we have
\begin{multline}\label{eqn:Fkmodular}
\sum_{Q\in \QD/\Gamma_1}F_k(\tau,Q)\Big|_{2-2k}S(\tau)- \sum_{Q\in \QD/\Gamma_1}F_k\left(\tau,Q\right) =- \sum_{\substack{[a,b,c]\in \QD\\ ac<0}}\sgn(c) \left(a\tau^2+b\tau+c\right)^{k-1}\\
=-2 \sum_{\substack{[a,b,c]\in \QD\\ a<0<c}} \left(a\tau^2+b\tau+c\right)^{k-1}.
\end{multline}
By \eqref{eqn:rpoly}, \eqref{eqn:Rpoly}, and \eqref{eqn:Knopp}, there exists a constant $C_1\in \C$  such that 
$$
\mathcal{H}_k\Big|_{2-2k}S(\tau)-\mathcal{H}_k(\tau) = r_{S}(\tau) -\frac{\left(2k-2\right)!}{\left(4\pi\right)^{2k-1}} R_S(\tau)=  r_{S}(\tau) +r_S^{c}(\tau) +C_1\left(\tau^{2k-2}-1\right).
$$
Using \eqref{eqn:RSpoly}, \eqref{eqn:Knopp}, and \eqref{eqn:r+} (as in the computation for \eqref{eqn:PC0rewrite}), we obtain 
\begin{equation}\label{eqn:Hkmodular}
\mathcal{H}_k\Big|_{2-2k}S(\tau)-\mathcal{H}_k(\tau) = 2^{2-2k}r^+\left(\fkD;\tau\right) + C_1\left(\tau^{2k-2}-1\right).
\end{equation}
By Theorem 4 of \cite{KohnenZagierRational} (see also Theorem \ref{thm:ratperiod}),
  there exists a constant $C_2\in \C$ (given explicitly in \cite{KohnenZagierRational}) such that 
$$
r^+\left(\fkD;\tau\right)=2\sum_{\substack{\left[a,b,c\right]\in\QD \\ a<0<c}}\left(a\tau^2+b\tau+c\right)^{k-1} + C_2\left(\tau^{2k-2}-1\right).
$$
Setting $C:=-2^{2k-2}C_1-C_2$ and combining \eqref{eqn:Fkmodular} and \eqref{eqn:Hkmodular} hence yields
\begin{multline}\label{eqn:HFrel}
\mathcal{H}_{k}\Big|_{2-2k}S(\tau)-\mathcal{H}_{k}(\tau)=-2^{2-2k}\left(\sum_{Q\in \QD/\Gamma_1}F_k(\tau,Q)\Big|_{2-2k}S(\tau)- \sum_{Q\in \QD/\Gamma_1}F_k\left(\tau,Q\right)\right)\\
 -2^{2-2k} C\left(\tau^{2k-2}-1\right).
\end{multline}
The claim follows by computing the action of $S$ on the constant function.  
\end{proof}
Combining Proposition \ref{prop:lift} with Theorem \ref{thm:PPkDexpansion} yields a surprising relationship between the functions $F_k(\tau,Q)$ and the local polynomial $P_{\CC}$ (explicitly given via \eqref{PCexplicit}) which may warrant further investigation.

\begin{proposition}
There exists a constant $C\in \C$ such that 
$$
\sum_{Q\in \QD/\Gamma_1}F_k(\tau,Q) - 2^{2k-2} D^{k-\frac{1}{2}} P_{\CC}(\tau)+C
$$
satisfies weight $2-2k$ modularity on $\Gamma_1$.
\end{proposition}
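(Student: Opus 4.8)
The plan is to read the proposition off directly from Theorem~\ref{thm:PPkDexpansion} and Proposition~\ref{prop:lift}, using only that both $\PPkD$ and the harmonic weak Maass form produced in Proposition~\ref{prop:lift} transform with weight $2-2k$. Recall the real-analytic function $\mathcal{H}_k(\tau)=\fkD^*(\tau)-\frac{(2k-2)!}{(4\pi)^{2k-1}}\Eich_{\fkD}(\tau)$ introduced just before Proposition~\ref{prop:lift}. First I would rewrite the expansion of Theorem~\ref{thm:PPkDexpansion} in the form $\PPkD(\tau)=P_{\CC}(\tau)+D^{\frac12-k}\mathcal{H}_k(\tau)$, valid on each connected component $\CC$ of $\H\setminus\ED$ (and on $\ED$ itself by the averaging convention, since $\mathcal{H}_k$ is continuous while $\PPkD$ and the piecewise polynomial obey the same one-sided averaging). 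Solving for the piecewise polynomial gives $2^{2k-2}D^{k-\frac12}P_{\CC}(\tau)=2^{2k-2}D^{k-\frac12}\PPkD(\tau)-2^{2k-2}\mathcal{H}_k(\tau)$.

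Next, take $C\in\C$ to be the constant furnished by Proposition~\ref{prop:lift}, so that $\mathcal{G}_{1-k,D}(\tau):=\sum_{Q\in\QD/\Gamma_1}F_k(\tau,Q)+2^{2k-2}\mathcal{H}_k(\tau)+C$ is a weight $2-2k$ harmonic weak Maass form. Substituting the identity above yields
$$\sum_{Q\in\QD/\Gamma_1}F_k(\tau,Q)-2^{2k-2}D^{k-\frac12}P_{\CC}(\tau)+C=\mathcal{G}_{1-k,D}(\tau)-2^{2k-2}D^{k-\frac12}\PPkD(\tau).$$
By Proposition~\ref{prop:lift}, $\mathcal{G}_{1-k,D}$ satisfies weight $2-2k$ modularity on $\Gamma_1$; by Theorem~\ref{thm:converge}, $\PPkD$ is a locally harmonic Maass form and in particular also satisfies weight $2-2k$ modularity on $\Gamma_1$. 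Hence the right-hand side, being a difference of two weight $2-2k$ modular functions, is itself weight $2-2k$ modular, which is the assertion. Note that only modularity is claimed, not harmonicity: the subtracted term $\PPkD$ is merely \emph{locally} harmonic, so the combination is not expected to be annihilated by $\Delta_{2-2k}$.

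The argument is essentially bookkeeping, so there is no genuine obstacle; the only points requiring a little care are: (i) interpreting ``$P_{\CC}(\tau)$'' as the well-defined piecewise-polynomial function on $\H\setminus\ED$ and checking that the identity of Theorem~\ref{thm:PPkDexpansion} extends across $\ED$ via the averaging property shared by $\PPkD$ and by the piecewise polynomial (both are finite sums of functions each satisfying the same one-sided averaging, cf.\ the proof of Proposition~\ref{prop:boundary} and the piecewise continuity of the summands in \eqref{eqn:PPMPoly}); and (ii) confirming that the constant $C$ in the statement may be taken to be exactly the constant of Proposition~\ref{prop:lift}, which is immediate from the display above.
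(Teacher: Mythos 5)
Your proof is correct and follows essentially the same route as the paper's: both rest on the decomposition $\PPkD = P_{\CC} + D^{\frac{1}{2}-k}\mathcal{H}_k$ from Theorem \ref{thm:PPkDexpansion} combined with Proposition \ref{prop:lift}. The paper phrases this as a computation of the error to modularity of $P_{\CC}$ under $S$ and then invokes \eqref{eqn:HFrel}, whereas you package the identical content as the global identity $\sum_{Q\in \QD/\Gamma_1}F_k(\tau,Q) - 2^{2k-2}D^{k-\frac{1}{2}}P_{\CC}(\tau) + C = \mathcal{G}_{1-k,D}(\tau) - 2^{2k-2}D^{k-\frac{1}{2}}\PPkD(\tau)$, a difference of two weight $2-2k$ modular objects; this is a slightly cleaner formulation of the same argument.
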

\begin{remark}
The function given in the proposition is locally holomorphic, and is hence a very special kind of locally harmonic Maass form.
\end{remark}
\begin{proof}
By Theorem \ref{thm:PPkDexpansion} and the modularity of $\PPkD$, we have 
$$
P_{\CC}\Big|_{2-2k}S(\tau) - P_{\CC}(\tau) =-D^{\frac{1}{2}-k} \left(\mathcal{H}_{k}\Big|_{2-2k}S(\tau)-\mathcal{H}_{k}(\tau)\right).
$$
Plugging in Proposition \ref{prop:lift} (or \eqref{eqn:HFrel}) yields the claim.
\end{proof}


\begin{thebibliography}{99}
\bibitem{AS} M. Abramovitz and I. Stegun, Handbook of Mathematical functions with formulas, graphs, and mathematical tables, 9th edition, New York, Dover, 1972.
\bibitem{AndrewsPercolation} 
G. Andrews, \begin{it}Partitions with short sequences and mock theta functions,\end{it} Proc. Nat. Acad. Sci. USA \begin{bf}102\end{bf} (2005), 4666--4671.
\bibitem{Bengoechea} P. Bengoechea, Corps quadratiques et formes modulaires, Ph.D. thesis, 2013.
\bibitem{BringmannGuerzhoyKentOno} K. Bringmann, P. Guerzhoy, Z. Kent, and K. Ono, \begin{it}Eichler-Shimura theory for mock modular forms\end{it}, Math. Ann. \textbf{355} (2013), 1085--1121.
\bibitem{BringmannOnoKac}K. Bringmann and K. Ono, \begin{it}Some characters of Kac and Wakimoto and nonholomorphic modular functions\end{it}, Math. Ann. \begin{bf}345\end{bf} (2009), 547--558.
\bibitem{BringmannOnoAnnals} K. Bringmann and K. Ono, \begin{it}Dyson's rank and Maass forms\end{it}, Ann. of Math. \begin{bf}171\end{bf} (2010), 419--449.
\bibitem{BruinierFunke} J. Bruinier and J. Funke, \begin{it}On two geometric theta lifts\end{it},  Duke Math. J.  \begin{bf}125\end{bf}  (2004),  no. 1, 45--90.
\bibitem{BruinierOno} J. Bruinier and K. Ono, \begin{it}Algebraic formulas for the coefficients of half-integral weight harmonic weak Maass forms\end{it}, Adv. Math. \begin{bf}246\end{bf} (2013), 198--219.
\bibitem{BruinierOnoAnnals} J. Bruinier and K. Ono, \begin{it}Heegner divisors, $L$-functions, and Maass forms\end{it}, Ann. of Math. \begin{bf}172\end{bf} (2010), 2135--2181.
\bibitem{BruinierOnoRhoades} J. Bruinier, K. Ono, and R. Rhoades, \begin{it}Differential operators for harmonic Maass forms and the vanishing of Hecke eigenvalues\end{it}, Math. Ann. \begin{bf}342\end{bf} (2008),  673--693.
\bibitem{Cohen} H. Cohen, \begin{it}Sums involving the values at negative integers of $L$-functions of quadratic characters\end{it}, Math. Ann. \textbf{217} (1975), 271--285.
\bibitem{Quantum} A. Dabholkar, S. Murthy, and D. Zagier, \begin{it}Quantum black holes, wall crossing, and mock modular forms\end{it}, submitted for publication.
\bibitem{DIT} W. Duke, \"O. Imamo{$\overline{\text{g}}$}lu, and \'A. T\'oth, \begin{it}Cycle integrals of the $j$-function and mock modular forms,\end{it} Ann. of Math. \textbf{173} (2011), 947--981.
\bibitem{DITrational} W. Duke, \"O. Imamo{$\overline{\text{g}}$}lu, and \'A. T\'oth, \begin{it}Rational period functions and cycle integrals\end{it}, Abh. Math. Hamburg \textbf{80} (2010), 255--264.
\bibitem{Eichler}M. Eichler, \begin{it}Eine Verallgemeinerung der Abelschen Integrale\end{it}, Math. Z. \begin{bf}67\end{bf} (1957), 267--298. 
\bibitem{EguchiHikami}
T. Eguchi and K. Hikami, \begin{it}Superconformal algebras and mock theta functions 2. Rademacher expansion for $K3$ surface\end{it}, Commun. Number Theory Phys. \begin{bf}3\end{bf} (2009), 531--554.
\bibitem{Folsom}A. Folsom, \begin{it}Kac--Wakimoto characters and universal mock theta functions\end{it}, Trans. Amer. Math. Soc. \begin{bf}363\end{bf} (2011), 439--455.
\bibitem{Fay} J. Fay, {\it{Fourier coefficients of the resolvent for a Fuchsian group}}, J.  reine und angew. Math. \textbf{293-294} (1977), 143--203.
\bibitem{Hoevel} M. H\"ovel, \begin{it}Automorphe Formen mit Singularitaten auf dem hyperbolischen Raum\end{it}, Ph.D. thesis, 2012.
\bibitem{ImOS} \"O. Imamo{$\overline{\text{g}}$}lu, C. O'Sullivan, \begin{it}Parabolic, hyperbolic and elliptic Poincar\'e series\end{it}, Acta Arith. \begin{bf}139\end{bf} (2009), 199--228.
\bibitem{KacWakimoto} V. Kac and M. Wakimoto, \begin{it}Integrable highest weight modules over affine superalgebras and Appell's function\end{it}, Comm. Math. Phys. \begin{bf}215\end{bf} (2001), 631--682.
\bibitem{Katok} S. Katok, \begin{it}Closed geodesics, periods and arithmetic of modular forms,\end{it} Invent. Math. \begin{bf}80\end{bf} (1985), 469--480.
\bibitem{Knopp} M. Knopp, \begin{it}Construction of automorphic forms on $H$-Groups and supplementary Fourier series\end{it}, Trans. Amer. Math. Soc. \begin{bf}103\end{bf} (1962), 168--188.  Corrigendum:  Trans. Amer. Math. Soc. \begin{bf}106\end{bf} (1963), 341--345.
\bibitem{KohnenThesis} W Kohnen, \begin{it}Beziehungen zwischen Modulformen halbganzen Gewichts und Modulformen ganzen Gewichts\end{it}, Ph.D. thesis, Bonner Mathematische Schriften \begin{bf}131\end{bf}, Rheinische Friedrich-Wilhelms-Universit\"at, Bonn, 1980.
\bibitem{KohnenCoeff} W. Kohnen, \begin{it}Fourier coefficients of modular forms of half-integral weight,\end{it} Math. Ann. \begin{bf}271\end{bf} (1985), 237--268.
\bibitem{KohnenZagier} W. Kohnen and D. Zagier, \begin{it}Values of $L$-series of modular forms at the center of the critical strip\end{it} \textbf{64} (1981), 175--198.
\bibitem{KohnenZagierRational} W. Kohnen and D. Zagier, \begin{it}Modular forms with rational periods\end{it} in ``Modular Forms,'' ed. by R. A. Rankin, Ellis Horwood (1984), 197--249.
\bibitem{Kramer}D. Kramer, \begin{it}Applications of Gauss's theory of reduced binary quadratic forms to zeta functions and modular forms\end{it}, Thesis, University of Maryland.
\bibitem{Manschot}J. Manschot, \begin{it}Stability and duality in $N=2$ supergravity\end{it}, Commun. Math. Phys. \textbf{299} (2010), 651--676.
\bibitem{ManschotMoore} J. Manschot and G. Moore, \begin{it}A modern Farey tail\end{it}, Commun. Num. Theor. Phys. \textbf{4} (2010), 103--159.
\bibitem{Parson}  L. Parson, \begin{it}Modular integrals and indefinite binary quadratic forms\end{it} in ``A tribute to Emil Grosswald: number theory and related analysis,'' Contemp. Math. \begin{bf}143\end{bf} (1993), 513--523.
\bibitem{Petersson} H. Petersson, \begin{it}Einheitliche Begr\"undung der Vollst\"andigkeitss\"atze f\"ur die Poincar\'eschen Reihen von reeller Dimension bei beliebigen Grenzkreisgruppen von erster Art\end{it}, Abh. Math. Sem. Hamburg \begin{bf}14\end{bf} (1941), 22--60.
\bibitem{Poincare} H. Poincar\'e, \begin{it}Sur les invariantes arithm\'etiques\end{it}, J. f\"ur die Reine und angew. Math., \begin{bf}129\end{bf} (1905), 89--150.
\bibitem{Rhoades}R. Rhoades, \begin{it}Asymptotics for the number of strongly unimodular sequences\end{it}, Int. Math. Res. Not., to appear.
\bibitem{Shimura} G. Shimura, \begin{it}On modular forms of half integral weight\end{it}, Ann. of Math. \textbf{97} (1973), 440--481.
\bibitem{Shintani}  T. Shintani, \begin{it}On construction of holomorphic cusp forms of half integral weight\end{it}, Nagoya Math. J. \begin{bf}58\end{bf} (1975), 83--126.

\bibitem{ZagierRQ} D. Zagier, \begin{it}Modular forms associated to real quadratic fields,\end{it} Invent. Math. \begin{bf}30\end{bf} (1975), 1--46.
\bibitem{ZagierMFwhose} D. Zagier, \begin{it}Modular forms whose Fourier coefficients involve zeta-functions of quadratic fields\end{it}, Modular functions of one variable, VI, Proc. Second Int. Conf., Univ. Bonn, Bonn, 1976, Lecture Notes in Math. \begin{bf}627\end{bf} (1977) 105--169. 
\bibitem{Zagier} D. Zagier, \begin{it}Ramanujan's mock theta functions and their applications\end{it} [d'apres Zwegers and Bringmann-Ono], S\'eminaire Bourbaki \begin{bf}986\end{bf} (2007).
\bibitem{ZwegersThesis} S. Zwegers, \begin{it}Mock theta functions\end{it}, Ph.D. thesis, Utrecht University (2002).
 \end{thebibliography}
\end{document}